\renewcommand{\MR}[1]{\null}
\newtheorem{theorem}{Theorem}[section]
\newtheorem{lemma}[theorem]{Lemma}
\newtheorem{proposition}[theorem]{Proposition}
\theoremstyle{definition}
\newtheorem{remark}[theorem]{Remark}
\newtheorem*{remark*}{Remark}
\numberwithin{equation}{section}
\newcommand{\abs}[1]{{\lvert#1\rvert}}
\newcommand{\norm}[1]{{\lVert#1\rVert}}
\newcommand{\N}{\mathbb{N}}
\newcommand{\R}{\mathbb{R}}
\newcommand{\Rn}{\mathbb{R}^n}
\newcommand{\Rnk}{\R^{n-k}}
\newcommand{\Ry}{\Rk_y}
\newcommand{\Rk}{\R^k}
\newcommand{\Rt}{\R_t}
\newcommand{\Ha}{\mathcal{H}}
\newcommand{\Hn}{\mathcal{H}^n}
\newcommand{\Hk}{\mathcal{H}^k}
\newcommand{\Hnk}{\mathcal{H}^{n-k}}
\newcommand{\Omegas}{\Omega^\sigma}
\newcommand{\Es}{E^\sigma}
\newcommand{\esspj}{\pi^+_{n-k}}
\newcommand{\esspjt}{\pi^+_{n-k,t}}
\newcommand{\proj}{\pi_{n-k}}
\newcommand{\projt}{\pi_{n-k,t}}
\newcommand{\loc}{{\mathrm{loc}}}
\newcommand{\Ds}{D^\mathrm{s}}
\newcommand{\Da}{D^\mathrm{a}}
\newdimen\mex
\newcommand{\nquad}{\mspace{-18.0mu}}
\newcommand{\nqquad}{\mspace{-36mu}}
\def\niv{\mathrel{\hbox{\hglue -0.4\mex
  \vrule \@height 1\mex \@width .1\mex
  \vrule \@height .1\mex \@width 1\mex
  \hglue -0.2\mex}}}
\newcommand{\rest}{\niv}
\newcommand{\us}{u^\sigma}
\newcommand{\Su}{\mathcal{S}_u}
\newcommand{\Sus}{\mathcal{S}_{u^\sigma}}
\newcommand{\mS}{\mathcal{S}}
\newcommand{\sobo}{W^{1,1}_{0,y}}
\newcommand{\frec}{f_\infty}
\newcommand{\ftrec}{\tilde{f}_\infty}
\newcommand{\weakto}{\rightharpoonup}
\def\Xint#1{\mathchoice
   {\XXint\displaystyle\textstyle{#1}}%
   {\XXint\textstyle\scriptstyle{#1}}%
   {\XXint\scriptstyle\scriptscriptstyle{#1}}%
   {\XXint\scriptscriptstyle\scriptscriptstyle{#1}}%
   \!\int}
\def\XXint#1#2#3{{\setbox0=\hbox{$#1{#2#3}{\int}$}
     \vcenter{\hbox{$#2#3$}}\kern-.5\wd0}}
\def\dashint{\Xint-}
\newcommand{\brid}{\partial^*}
\newcommand{\bmis}{\partial^{\mathcal{M}}}
\newcommand{\measbdry}{\bmis}
\newcommand{\aev}{\text{-a.e.\ }}
\newcommand{\ft}{\tilde{f}}
\newcommand{\Ft}{\tilde{F}}
\DeclareMathOperator{\supp}{spt}
\DeclareMathOperator{\essup}{ess\,sup}
\newcommand{\step}[1]{\noindent\textbf{Step #1.}}
\let\phi\varphi
\let\epsilon\varepsilon
\newcommand{\Ath}{\tilde{A}_h}
\begin{document}

\title{The Steiner rearrangement in any codimension}

\author[G. M. Capriani]{Giuseppe Maria Capriani}
\address{Dipartimento di Matematica e Applicazioni \\ Università degli Studi di Napoli ``Federico
II''\\ Via Cintia -- 80126, Napoli, Italy}
\email{giuseppe.capriani@gmail.com}

\keywords{Steiner symmetrization, Steiner rearrangement, Pólya-Szegő inequality.}
\subjclass[2010]{Primary 49Q20 ; Secondary 26B30, 26D10, 46E35}
\date{\today}

\begin{abstract}
We analyze the Steiner rearrangement in any codimension of Sobolev and $BV$ functions. In particular, we prove a
Pólya-Szegő inequality for a large class of convex integrals. Then, we give minimal assumptions under which
functions attaining equality are necessarily Steiner symmetric.
\end{abstract}
\maketitle

\section{Introduction}
Symmetrization techniques are a powerful tool to deal with those variational problems whose extrema are expected to
exhibit symmetry properties due either to the geometrical or to the physical nature of the problem (see, for instance,
the classical book \cite{PS} and \cite{kawohl}).

As in the the isoperimetric theorem, it is well-known that the perimeter of a set decreases under several
types
of symmetrizations such as polarization, standard Steiner symmetrization or more general Steiner
symmetrization with respect to a $n-k$ dimensional plane.

Similarly, the so-called Pólya-Szegő inequality states that a large class of Dirichlet-type
integrals depending on the gradient of a real-valued function decreases under rearrangement operations such as the
Schwarz spherical rearrangement or the Steiner rearrangement in codimension $k$, see Definition \ref{eq:steiner}.

In this framework, a natural question, which has been extensively studied in recent years, is to give a characterization
of the equality cases in the Pólya-Szegő
inequality as well as in inequalities concerning symmetrization of sets.

In the celebrated paper \cite{BZ} Brothers and Ziemer characterized the equality cases in the Pólya-Szegő inequality
for the Schwarz rearrangement of a Sobolev function under the minimal assumption that the set of critical points of the
rearranged function has zero Lebesgue measure (see also \cite{FV} for an alternative proof). The corresponding
inequality for $BV$ functions  was first proved in \cite{hilden}, while a much finer
analysis is carried out in \cite{bv-rearra}, where also the equality cases are characterized.

Concerning the standard Steiner symmetrization and its higher codimension version, the validity of the isoperimetric
inequality and of the Pólya-Szegő principle are also well-known, see for instance a proof via polarization
given in \cite{brock-s} and the references therein. On the other hand, the characterization of the equality cases seems
to be a much harder problem. The first result in this direction was proved in \cite{CCF} in connection to the 
perimeter inequality for the standard Steiner symmetrization. In analogy to what was pointed out in \cite{BZ},
also in this case it turns out that such characterization may hold only
under the assumption that the boundary of the set is almost nowhere orthogonal to the symmetrization hyperplane.
However this condition alone is not yet enough and a connectedness assumption, in a suitable measure theoretic sense,
must
be required on the set.

The equality cases in the Pólya-Szegő inequality for the standard Steiner rearrangement of Sobolev and $BV$
functions were investigated in \cite{CF}. Again, the crucial assumption was that the set where the  derivative of the
extremal function in the direction orthogonal to the hyperplane of symmetrization vanishes is negligible. As 
for sets, also some connectedness and geometrical assumptions have to be made on the domain supporting the
function.

Very recently, in \cite{barcafus} the equality cases in the perimeter inequality for the Steiner symmetrization in
codimension $k$ were characterized using a different approach from the one in \cite{CCF}, aimed to reduce the
problem to a careful study of the barycentre of the sections of the original set.

We further develop the analysis made in the above papers by considering the
Pólya-Szegő
inequality for Dirichlet-type integrals of Sobolev functions or area-like integrals of $BV$
functions. First we prove the Pólya-Szegő inequality for general convex integrands $f$ depending
on the gradient of a Sobolev function $u$. Besides convexity, we assume that $f$ is
non-negative, vanishes at $0$ and depends on the norm of the $y$-component of the gradient of $u$, $y\in\Rk$
being the direction of symmetrization.

In order to characterize the equality cases, i.e., to show that $u$ coincides with its Steiner rearrangement $\us$ up to
translations, the strict convexity of $f$ is required together with the assumption that $\nabla_y \us\neq 0$ a.e.. Note
that the result is false if one of the two previous assumptions is dropped. As in \cite{CF}, suitable assumptions on the
domain $\Omega$ of $u$ are also needed.

A similar analysis on the Pólya-Szegő inequality and on the characterization of the equality cases is also carried out
in the more general framework of functions of bounded variation. In this case, however, one has to assume that 
$f$ has linear growth at infinity and to suitably extend the integral by taking into account the singular part of the
gradient measure $Du$, see \eqref{eq:Jf}.

These results are proved via geometric measure theory arguments based on the isoperimetric
theorem,  the coarea formula and fine properties of Sobolev and $BV$ functions. In particular, to deal with the $BV$
case one has to rewrite the original functional, which in principle depends on $Du$, as a functional
defined on the graph of $u$ and depending on the generalized normal to the graph.

The latter approach could be also carried out in the Sobolev case and therefore we could have chosen to deal from the
 beginning with $BV$ functions and then to deduce the Sobolev case as a corollary. However, we have preferred to give
in the Sobolev case an independent proof that avoids the heavy machinery required in the $BV$ case.

It is also worth to mention that, though the general strategy follows the path set up in previous papers, namely
\cite{CCF} and \cite{CF}, we have to face here an extra substantial difficulty which appears only when dealing with the
Steiner rearrangement in codimension strictly larger than $1$. This difficulty appears for those functions that Almgren
and Lieb, in \cite{AL}, called \textit{coarea irregular} (see the discussion at the end of Section \ref{sec:states}).
These functions, which can even be of class $C^1$, are precisely the ones where Schwarz rearrangement in discontinuous
with respect to the $W^{1,p}$ norm.

Finally, the paper is organized as follows. In Section \ref{sec:states} we state and comment the main results and in
Section \ref{sec:background} we collect some background material on sets of finite perimeter and functions of bounded
variation. Section \ref{sec:sobolev} is devoted to Sobolev functions while Section \ref{sec:bv} deals with $BV$
functions and functionals depending on the normal.

\section{Statement of the main results}\label{sec:states}
Given two sets $E$ and $F$, we denote the \emph{symmetric difference} by $E\triangle F:=(E\cup F)\setminus(E\cap
F)$. Given two open sets $\omega\subset\Omega$ we write $\omega\Subset\Omega$ if $\omega$ is \emph{compactly contained}
in $\Omega$, i.e., if $\overline{\omega}\subset\Omega$ and $\overline{\omega}$ is compact.
Let $n\geq 2$ and $1\leq k<n$. We write a generic point $z\in\R^n$ as $z=(x,y)$, where
$x\in\Rnk$ and $y\in\Rk$. In order to clarify the different roles of the variables we will also write
$\R^n=\Rnk\times\Ry$ and $\R^{n+1}=\Rnk\times\Ry\times\Rt$.

Given a measurable set $E\subset\Rnk\times\Rk$, for $x\in\Rnk$ we define the section of $E$ at $x$ as
\begin{equation}\label{section}
E_x:=\left\{y\in\Rk:(x,y)\in E \right\}.
\end{equation}
Then we define the \emph{projection} of $E$ as
\begin{equation}\label{proj}
 \proj(E):=\left\{x\in\Rnk:(x,y)\in E \right\}
\end{equation}
and the \emph{essential projection} as
\begin{equation}\label{esspj}
\esspj(E):=\left\{x\in\Rnk:(x,y)\in E,\,L(x)>0\right\},
\end{equation}
where $L(x):=\Hk(E_x)$ and $\Hk$ is the
$k$-dimensional Hausdorff measure. We define the \emph{Steiner symmetral (in codimension $k$)} $E^{\sigma}$
of $E$ as
\begin{equation}\label{steiner_symmetral}
E^{\sigma}:=\left\{(x,y)\in\Rnk\times\Rk:x\in\esspj({E}),\,\abs{y}
^k\leq\frac {L(x)}{ \omega_k } \right\} ,
\end{equation}
where $\omega_k$ is the volume of the $k$-dimensional ball.

When $E\subset\Rnk\times\Ry\times\Rt$, its Steiner symmetral $E^\sigma$ is defined in the same way, after
replacing \eqref{section}--\eqref{steiner_symmetral} by similar definitions. In particular, we set
\begin{gather*}
 E^\sigma:=\left\{(x,y,t)\in\Rnk\times\Ry\times\Rt:(x,t)\in\esspjt({E}),\,\abs{y}
^k\leq\frac {L(x,t)}{ \omega_k } \right\}\\
\esspjt(E):=\left\{(x,t)\in\Rnk\times\Rt:(x,y,t)\in E,\,L(x,t)>0\right\}\,,
\end{gather*}
where $L(x,t):=\Ha^{k+1}(E_{x,t})$ and $E_{x,t}:=\{y\in\Rk:(x,y,t)\in E\}$. 

Given a non-negative measurable function $u$ defined on $E$ such that for $\Hnk\aev x\in\esspj(E)$
\begin{equation}\label{eq:misura_sezioni}
\Hk\left(\{y\in E_x : u(x,y)>t\}\right)<+\infty,\,\forall t>0\,,
\end{equation}
we define its \emph{Steiner rearrangement (in codimension $k$)} $\us:E^\sigma\to\R$ as
\begin{equation}\label{eq:steiner}
\us(x,y):=\inf\left\{t>0:\lambda_u(x,t)\leq\omega_k\abs{y}^k\right\}\,,
\end{equation}
where 
\[\lambda_u(x,t):=\Hk\left(\bigl\{y\in\Rk:u_0(x,y)>t\bigr\}\right)\]
is the \emph{distribution function (in codimension $k$)} of $u(x,\cdot)$ and $u_0$ is the extension of $u$ by $0$
outside $E$. Clearly,
$\us = 0$ in $\R^n\setminus E^\sigma$.
Let us observe that 
\begin{equation}\label{eq:schwarz}
\us(x,\cdot)=\bigl(u(x,\cdot)\bigr)^*\,, 
\end{equation}
where $\bigl(u(x,\cdot)\bigr)^*$ is the Schwarz rearrangement (which is also known as \textit{spherical symmetric
decreasing
rearrangement})
of $u$
with respect to the last $k$ variables.
Let us recall its definition. Given any non-negative measurable function $q:\Rk\to\R$, such that
$\Hn(\{y\in\Rk:u(y)>t\})$ is finite for all $t>0$, the \emph{Schwarz rearrangement} $q^*$ of $q$ is defined as
\[q^*(y):=\inf\{t>0:\mu(t)\leq \omega_k\abs{y}^k\}\,,\]
where $\mu(t):=\Hn\{y\in\Rk:q(y)>t\}$ is the \emph{distribution function} of $u$. The Schwarz
rearrangement satisfies an important property: it is non-expansive on $L^p(\R^k)$ for every $1\leq p< \infty$
(see, e.g., \cite{lieb-loss}*{Theorem 3.5}), i.e., for every $q_1$, $q_2\in L^p(\Rk)$
\[
\int_{\Rk}\abs{q_1^*-q_2^*}^p\leq \int_{\Rk}\abs{q_1-q_2}^p\,,
 \]
and this clearly implies the continuity of the Schwarz rearrangement on $L^p$.
Given any two non-negative measurable functions $u,v$ defined on $E$ and satisfying
\eqref{eq:misura_sezioni}, on applying the previous
inequality to $u^*(x,\cdot)$ and $v^*(x,\cdot)$ and integrating with respect to $x$, we see that
\begin{equation}\label{lemma:s_continuous}
 \norm{\us-v^\sigma}_{L^p(\Es)}\leq \norm{u-v}_{L^p(E)}\,,
 \end{equation}
 for all $1\leq p<+\infty$. In particular the Steiner rearrangement is continuous on $L^p$.

Given any non-negative and measurable function $u$, we define the \emph{subgraph} of $u$ as
\[\Su:=\left\{(x,y,t)\in\R^{n+1} : (x,y)\in E,\,0<t<u(x,y)\right\}.\]

Let us observe that for every $(x,t)\in\Rnk\times\Rt^+$, then $\Hk((\Su)_{x,t})=\lambda_u(x,t)$ and for
$\Hnk\aev x\in\Rnk$ we have $\us(x,y)>t$ if and only if $\lambda_u(x,t)>\omega_k\abs{y}^k$. Hence, we easily deduce that
\begin{equation}\label{eq:su_equiv_sus}
(\Su)^\sigma\text{\ and\ } \mathcal{S}_{\us}\text{\ are\ }\Ha^{n+1}\text{\ equivalent}.
\end{equation}
Moreover also the sets
$\{(x,y):u(x,y)>t\}^\sigma$ and $\{(x,y):\us(x,y)>t\}$ are equivalent (modulo $\Hn$) for every $t>0$. The latter fact
assures us that
$u$ and $\us$ are equidistributed functions. Actually, by the definition of the Steiner rearrangement, for $\Hnk\aev
x\in\proj(E)$ the functions $u(x,\cdot)$ and $\us(x,\cdot)$ are equidistributed. Therefore, Steiner rearrangement
preserves any so-called rearrangement invariant norm of a function, i.e., a norm depending only on the measure of its
level sets --- here important examples are any Lebesgue, Lorentz or Orlicz norm.

Let $f:\Rn\to [0,+\infty)$ be a non-negative convex function vanishing at $0$. We say that $f$ is radially symmetric
with respect to the
last $k$ variables if there exists a function $\ft:\R^{n-k+1}\to[0,+\infty)$ such that
\begin{equation}\label{eq:fconvex}
 f(x,y)=\ft(x,\abs{y})\,,
\end{equation}
for every $(x,y)\in\Rn$.

Given $f$ as above and an open set $\Omega$, we are interested in studying how functionals of the type
\begin{equation*}
u\mapsto \int_\Omega f(\nabla u)\,dz 
\end{equation*}
behave under Steiner rearrangement.
The class of admissible functions for these functionals will be
\[ \sobo(\Omega):=\left\{u:\Omega\to\R : u_0\in W^{1,1}(\omega\times\Ry),\,\forall \omega\Subset\proj(\Omega),
 \,\omega\text{\ open\ }\right\}.\]
Roughly speaking, $\sobo(\Omega)$ consists of those functions that are locally Sobolev with respect to the $x$ variable
and 
globally Sobolev with zero trace (in some appropriate sense) with respect to the $y$ variable. Let us remark that this
space is bigger than $W^{1,1}_0(\Omega)$. For instance, if $\Omega=[0,2\pi]^2$, the function $u=(\sin
y)/x\in\sobo(\Omega)$ but does not belong to $W^{1,1}_0(\Omega)$.
We can define, in a similar way, also the space $W^{1,p}_{0,y}(\Omega)$ for $p>1$.
For $\nabla u=(\partial_1 u,\dotsc,\partial_n u)$ we set 
\[\nabla_x u:=(\partial_1 u,\dotsc,\partial_{n-k}u)\text{\ and\ }
\nabla_y u:= (\partial_{n-k+1}u,\dotsc,\partial_n u),\]
where $\partial_i u:=\partial_{z_i}u(z)$ for $i=1,\dotsc,n$.

Note that the Steiner rearrangement maps $\sobo(\Omega)$ to $\sobo(\Omegas)$ (see
\cite{brock-s} and Proposition~ \ref{thm:us_is_sobolev} below). Let us
remark that in general the mapping is not continuous, see \cite{AL}.

We can now state the Pólya-Szegő principle for the Steiner rearrangement.
\begin{theorem}\label{thm:disuguaglianza}
 Let $f$ be a non-negative convex function, vanishing at $0$ and satisfying \eqref{eq:fconvex}. Let $\Omega\subset\Rn$
be an open set and $u\in\sobo(\Omega)$ be a non-negative function. Then
 \begin{equation}\label{eq:disuguaglianza}
 \int_{\Omegas}f(\nabla \us)\,dz\leq\int_\Omega f(\nabla u)\,dz\,.
\end{equation}
\end{theorem}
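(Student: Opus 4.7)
The plan is to work slice-wise in $x\in\Rnk$, exploiting the radial structure of $\us(x,\cdot)$, the coarea formula in $\Rk$, and the convexity of $\ft$. After truncating $u$ at large and small heights and mollifying so that $u$ is smooth and compactly supported in each slice, the coarea and implicit-function arguments below apply cleanly on $\Hnk$-a.e.\ slice; the general case will be recovered by the $L^p$-continuity of the rearrangement \eqref{lemma:s_continuous} together with the lower semicontinuity of the functional on the left and dominated convergence on the right.

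Fix such a regular $x$ and set $\rho(x,t):=(\lambda_u(x,t)/\omega_k)^{1/k}$, so that $\{\us(x,\cdot)>t\}=B_{\rho(x,t)}$. Implicit differentiation of the identity $\omega_k|y|^k=\lambda_u(x,\us(x,y))$ on the level sphere, combined with the standard coarea identities
\[
\partial_t\lambda_u(x,t)=-\int_{\{u(x,\cdot)=t\}}\frac{d\Ha^{k-1}}{|\nabla_y u|},\qquad
\nabla_x\lambda_u(x,t)=-\int_{\{u(x,\cdot)=t\}}\frac{\nabla_x u}{|\nabla_y u|}\,d\Ha^{k-1},
\]
yields, for $|y|=\rho(x,t)$,
\[
\nabla_x\us=\int_{\{u(x,\cdot)=t\}}\nabla_x u\,d\mu_t,\qquad
|\nabla_y\us|=\frac{k\omega_k\rho^{k-1}}{\int_{\{u(x,\cdot)=t\}}|\nabla_y u|^{-1}\,d\Ha^{k-1}},
\]
where $\mu_t$ is the probability measure obtained by normalising $|\nabla_y u|^{-1}\,d\Ha^{k-1}$ on $\{u(x,\cdot)=t\}$. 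The classical isoperimetric inequality in $\Rk$, namely $\Ha^{k-1}(\{u(x,\cdot)=t\})\geq k\omega_k\rho^{k-1}$, then forces $|\nabla_y\us|\leq\int|\nabla_y u|\,d\mu_t$. Since $\ft$ is jointly convex and $\ft(\xi,\cdot)$ is non-decreasing on $[0,\infty)$ (a convex radial function of $y$ attains its minimum at the origin), combining this monotonicity with Jensen's inequality gives
\[
\ft\bigl(\nabla_x\us,\,|\nabla_y\us|\bigr)\;\leq\;\ft\!\Big(\!\int\nabla_x u\,d\mu_t,\,\int|\nabla_y u|\,d\mu_t\Big)\;\leq\;\int\ft(\nabla_x u,|\nabla_y u|)\,d\mu_t.
\]

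Multiplying by $|\partial_t\lambda_u|$ and integrating in $t\in(0,\infty)$, the right-hand side reassembles via coarea into $\int_{\Rk}f(\nabla u(x,y))\,dy$, while the left-hand side, after passage to radial coordinates using $k\omega_k\rho^{k-1}|\partial_t\rho|=|\partial_t\lambda_u|$, becomes $\int_{\Rk}f(\nabla\us(x,y))\,dy$; integration over $x\in\Rnk$ then produces \eqref{eq:disuguaglianza}. The main obstacle is the rigorous justification of the pointwise identities and coarea formulas for $\Hnk$-a.e.\ $x$ --- which requires a Sard-type argument controlling the set of critical levels --- and, more seriously in codimension $k>1$, the control of the approximation step in the presence of \emph{coarea irregular} functions in the sense of \cite{AL}, for which the Schwarz rearrangement is discontinuous in $W^{1,p}$.
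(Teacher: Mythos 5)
Your core argument — work slice-wise in $x$, use the coarea formula and the pointwise formulas for $\nabla\lambda_u$, apply the isoperimetric inequality in $\Rk$, then the monotonicity of $\ft(\xi,\cdot)$ and Jensen's inequality — is exactly the paper's Step~1, and the proposed mechanism for the general case (lower semicontinuity on the left, $L^1$-continuity of the rearrangement, convergence on the right) is also the paper's Step~2. But the way you set up the reduction has a genuine gap, and your final worry is a red herring.

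The gap is in the sentence ``mollifying so that $u$ is smooth and compactly supported in each slice.'' Smoothness and compact support are not enough: for the coarea step $\int_{\{\us>0\}} f(\nabla\us)\,dy=\int_0^\infty dt\int_{\brid\{\us>t\}} f(\nabla\us)/\abs{\nabla_y\us}\,d\Ha^{k-1}$ to be an identity (and not merely an inequality), you need $\Hk\bigl(\{\nabla_y u(x,\cdot)=0\}\cap\{u(x,\cdot)>0\}\bigr)=0$ for a.e.\ $x$. A mollified function can be constant on sets of positive $\Hk$-measure, and Sard only controls the image of the critical set in the $t$-axis, not its preimage in $\Ry$; where $\nabla_y u=0$ the coarea formula simply fails to account for $\int f(\nabla u)\,dy$. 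The paper's Lemma~\ref{lemma:approx} is there precisely to fix this: it approximates in $W^{1,1}$ by compactly supported functions that are polynomials (plus a small strictly convex $\delta\abs{y}^2$ bump) on the support, which guarantees $\nabla_y w\neq 0\ \Hn$-a.e.\ there. You need an approximation of this kind, not just mollification.

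Two smaller points. First, ``dominated convergence on the right'' does not work for a general convex $f$: with no growth assumption there is no dominating function. The paper instead first assumes linear growth (so $f$ is globally Lipschitz and $f(\nabla u_h)\to f(\nabla u)$ in $L^1$ follows from $W^{1,1}$-convergence), and then removes the growth assumption by writing $f$ as an increasing supremum of finitely many affine functions of $(\xi_x,\abs{\xi_y})$ and passing to the limit by monotone convergence. Second, your closing concern about coarea-irregular functions and the $W^{1,p}$-discontinuity of the rearrangement is not actually an obstacle for this theorem, and you have in fact already supplied the reason: the proof only needs $u_h^\sigma\to\us$ in $L^1$, which is guaranteed by \eqref{lemma:s_continuous}, combined with lower semicontinuity of $v\mapsto\int f(\nabla v)$ along $L^1$-convergent sequences. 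No convergence of $\nabla u_h^\sigma$ is required. (Coarea irregularity does matter elsewhere in the paper, in the analysis of equality cases, which is why the comment appears in Section~\ref{sec:states} --- but not here.)
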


In Theorem \ref{thm:disuguaglianza} the space $\sobo(\Omega)$ can be replaced by any
space $W^{1,p}_{0,y}(\Omega)$, see Remark \ref{rk:w1p}.

We will call $u$ an \emph{extremal} if equality holds in \eqref{eq:disuguaglianza}. We are now interested to find
minimal assumptions to have a rigidity theorem for the extremals, i.e., in finding conditions that necessarily imply an
extremal
$u$ to be Steiner symmetric. It turns out that these assumptions concern both the function $u$ and the domain $\Omega$.

Regarding $u$, we set, for $x\in\proj(\Omega)$,
\[M(x):=\inf\{t>0:\lambda_u(x,t)=0\}\,.\]
Clearly, for $\Hnk\aev x\in\proj(\Omega)$,
\[M(x)=\essup \{u(x,y):y\in\Omega_x\}.\]
Also, $M$ is a measurable function in $\proj(\Omega)$ and by \eqref{eq:misura_sezioni} is finite for $\Hnk\aev
x\in\proj(\Omega)$. We require that
\begin{equation}\label{eq:condizioni_u}
 \Hn\bigl(\{(x,y)\in\Omega : \nabla_y u(x,y)=0\}
 \cap\{(x,y)\in\Omega:
 \text{either\ }M(x)=0 \text{\ or\ } u(x,y)<M(x)\}\bigr)=0\,.
\end{equation}
Roughly speaking, this condition means that the subgraph of $u$ does not contain any non trivial portion of a
$k$-dimensional hyperplane in the $y$-direction, except at the highest value of $u(x,\cdot)$.

\begin{remark}\label{rk:us}
It is known that the Schwarz rearrangement, in dimension $n\geq 2$, shrinks the set of critical points of
a Sobolev function (see \cite{AL}), while the Steiner rearrangement in codimension $1$ preserves its measure (see
\cite{burchard}). Hence,
by \eqref{eq:schwarz} and using the fact that the Steiner rearrangement of a Sobolev function is still weakly
differentiable (see Proposition \ref{thm:us_is_sobolev}), we have
\[\begin{split}
 \Hn\bigl(\{(x,y)\in\Omega:\nabla_y u(x,y)=0\}\bigr)&=
 \int_{\proj(\Omega)}\Hk\bigl(\{\nabla u(x,\cdot)=0\}\bigr)\,d\Hnk(x)\\
 &\leq \int_{\proj(\Omega)}\Hk\bigl(\{\nabla (u(x,\cdot))^*=0\}\bigr)\,d\Hnk(x)\\
 &=\Hn\bigl(\{(x,y)\in\Omegas:\nabla_y \us(x,y)=0\}\bigr)\,.
 \end{split}
\]
Therefore, if $u$ satisfies \eqref{eq:condizioni_u}
then the same holds for $\us$.
\end{remark}
Regarding the open set $\Omega$, we require that 
\begin{equation}
\label{eq:condizioni_om1}
\proj(\Omega)\text{\ is\ connected\ and\ }
 \Omega\text{\ is\ bounded\ in\ the\ }y\text{\ direction,}
\end{equation}
i.e., there exists $M>0$ such that $\Omega_x\subset B(0,M)$ for every $x\in\proj(\Omega)$, where $B(0,M)$ is the ball
in $\Rk$ of radius $M$ centered in $0$.
We also
require that, in some sense, the boundary of $\Omega$ is
almost nowhere parallel to the $y$-direction
inside the cylinder $\proj(\Omega)\times\Ry$. To be precise, we shall assume that 
\begin{equation}\begin{split}
\label{eq:condizioni_om3}
\Omega\text{\ is\ of\ finite\ perimeter\ inside\ }&\proj(\Omega)\times\Ry\,\text{\ and\ }\\
\Ha^{n-1}\bigl(\{(x,y)\in\brid\Omega:\nu^\Omega_y=0\}\cap\{&\proj(\Omega)\times\Ry\}\bigr)=0\,,
\end{split}\end{equation}
where $\brid\Omega$ stands for the reduced boundary of $\Omega$ and $\nu_y^\Omega$ is the $y$-component of the
generalized inner normal $\nu^\Omega$ of $\Omega$ --- see the next
section for the definitions.

We can now state the following result which gives a characterization of the equality cases in
\eqref{eq:disuguaglianza}.
\begin{theorem}\label{thm:equality}
 Let $f:\Rn\to\R$ be a non-negative strictly convex function satisfying \eqref{eq:fconvex} and vanishing in $0$.  Let
$\Omega\subset\Rn$ be
an open set satisfying \eqref{eq:condizioni_om1}$-$\eqref{eq:condizioni_om3} and let $u\in\sobo(\Omega)$ be a
non-negative
function. If
\begin{equation}\label{eq:equality}
 \int_{\Omegas} f(\nabla \us)\,dz=\int_\Omega f(\nabla u)\,dz<+\infty\,,
\end{equation}
then, for
$\Ha^{n-k+1}\aev 
 (x,t)\in\esspjt(\Su)$ there exists $R(x,t)>0$ such that the set
 \begin{equation*}
  \{y:u(x,y)>t\}\text{\ is\ equivalent\ to\ }\{\abs{y} < R(x,t)\}\,.
 \end{equation*}
If in addition $u$ satisfies \eqref{eq:condizioni_u}, then $\us$ is equivalent to $u$ up to a translation in the
$y$-plane.
\end{theorem}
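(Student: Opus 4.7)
The plan is to combine the strategy used to prove Theorem~\ref{thm:disuguaglianza} with a careful analysis of when each intermediate inequality is an equality, following the scheme of \cite{CCF,CF} but adapted to codimension $k$.

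First I would decompose the functional via slicing. Since $f$ is convex and radially symmetric in $y$, for $\Hnk$-a.e.\ $x \in \proj(\Omega)$ one can fix $x$ and apply the classical Pólya-Szegő inequality for the Schwarz rearrangement on $\Rk$ to $u(x,\cdot)$, which controls the $\nabla_y$-contribution. The $\nabla_x$-contribution is handled by a coarea-type decomposition of $\int f(\nabla u)\,dz$ in terms of level sets combined with the isoperimetric inequality for Steiner symmetrization in codimension $k$. Equality in \eqref{eq:equality} then forces equality in the slice-wise Schwarz inequality for $\Hnk$-a.e.\ $x$, and equality in the isoperimetric step for a.e.\ level $t$.

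The first conclusion of the theorem then follows from the equality case of the Brothers-Ziemer theorem for the Schwarz rearrangement on $\Rk$ (no assumption on $u$ or $\Omega$ needed yet): for $\Hnk$-a.e.\ $x$ and a.e.\ $t \in (0,M(x))$ the super-level set $\{y \in \Omega_x : u(x,y)>t\}$ is equivalent to a ball $B(c(x,t),R(x,t))$. Transferring via \eqref{eq:su_equiv_sus} to the essential projection of $\Su$ gives exactly the first assertion.

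For the second conclusion, one must show that the center $c(x,t)$ is independent of $(x,t)$, after which a single $y$-translation aligns $u$ with $\us$. I would first select $c$ as a measurable function on $\esspjt(\Su)$ and then prove it is locally constant. The $x$-rigidity uses \eqref{eq:condizioni_om3}: since the lateral boundary of $\Omega$ is $\Ha^{n-1}$-almost nowhere vertical, any genuine $x$-discontinuity of $c(x,t)$ would produce extra perimeter on the level sets of $u$ not present in those of $\us$, contradicting equality. The $t$-rigidity uses \eqref{eq:condizioni_u}: where $\nabla_y u \neq 0$, a jump of $c(x,\cdot)$ across a non-maximal value of $u$ would force a portion of $\Su$ to contain a horizontal $k$-plane in the $y$-direction, which is precisely what \eqref{eq:condizioni_u} forbids (recall Remark~\ref{rk:us} ensures the same holds for $\us$). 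Finally, connectedness of $\proj(\Omega)$ promotes local constancy in $x$ to global constancy.

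The main obstacle is the rigidity of the centers $c(x,t)$, and in particular controlling the coarea-irregular phenomenon highlighted at the end of the introduction. When $k>1$ the Schwarz rearrangement is discontinuous on $W^{1,p}$ precisely because $\Hk$-positive critical sets on individual slices can be annihilated by rearrangement; without \eqref{eq:condizioni_u} one could translate different level sets independently at no cost in the functional, destroying the rigidity scheme. Reconciling this delicate failure with the simultaneous perimeter-type control provided by \eqref{eq:condizioni_om3}, and making the selection of $c$ genuinely measurable across the essential projection, is what I expect to be the technical core of the argument.
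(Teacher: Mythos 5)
Your high-level picture is right, but there are two genuine gaps where the sketch doesn't supply the idea that actually makes the argument work.

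First, for the level-set conclusion: what you call ``Brothers--Ziemer'' is really the equality case of the isoperimetric inequality on $\Rk$, and the real difficulty is not in invoking it but in \emph{propagating} the global equality \eqref{eq:equality} to the slice-wise and level-wise steps. The paper does this through explicit formulae for $\nabla\lambda_u$ (Lemma~\ref{lemma:derivate}) feeding a pointwise chain of inequalities \eqref{eq:pf_dis3}--\eqref{eq:pf_dis7}, after which one can integrate and use the standard ``$g_1\leq g_2$, $\int g_1=\int g_2$ $\Rightarrow g_1=g_2$ a.e.'' argument. But Lemma~\ref{lemma:derivate} \emph{requires} condition~\eqref{eq:condizioni_u}, precisely because without it the coarea-irregular phenomenon you mention can make $\lambda_u$ fail to be $W^{1,1}$. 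Since the first assertion of the theorem is stated \emph{without} assuming \eqref{eq:condizioni_u}, the Sobolev slicing machinery is not enough; the paper disposes of the general case by passing to the subgraph functional $F_f(\nu^{\Su})$ and invoking Proposition~\ref{thm:equal_F} (see Remark~\ref{rk:fsobo}). Your proposal simply asserts the slice-wise equality and never confronts this.

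Second, for the rigidity of the centers: your direct approach (``$x$-discontinuity of $c$ produces extra perimeter'', ``$t$-jump of $c$ forces a flat piece of the graph'') is a heuristic, and the $x$-part in particular has no concrete implementation. The paper takes an entirely different route: since $\Omegas=(\Omega^{S_i})^\sigma$ for each codimension-one Steiner symmetrization $S_i$, equality \eqref{eq:equality} forces equalities in the chain \eqref{eq:pf_ug_1}, the hypotheses of the $k=1$ theorem are verified for each $y_i$-direction (this is the content of the two displayed measure conditions in the proof, obtained by the argument of Step~1 of Proposition~\ref{prop:nu_uguale}), and \cite{CF}*{Theorem 2.2} then pins down each coordinate $(b(x,t))_{y_i}$ to a constant. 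The alternative rigorous direct argument appears only in the $BV$ part of the paper (Theorem~\ref{thm:SteinerF}) and rests on the barycentre derivative identity \eqref{eq:bary} from \cite{barcafus}, which you do not use. Without either the iterative reduction to $k=1$ or that identity, the constancy of $c(x,t)$ in $x$ is not established; equality in Jensen's inequality \eqref{eq:pf_dis5} and strict convexity give that $\nabla_x u$ is constant on $\brid\{y:u(x,y)>t\}$, but turning that into $\partial_{x_i}c=0$ and then into constancy on a connected $\proj(\Omega)$ is exactly the technical content your plan leaves out.
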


At first sight, one could think that the assumptions made in the above statements are too strong. However, one can
easily construct counterexamples even in codimension $1$ (see \cite{CF}) showing that assumptions
\eqref{eq:condizioni_u}$-$\eqref{eq:condizioni_om3} cannot be weakened.

As we have seen before, if $u$ satisfies condition \eqref{eq:condizioni_u}, then the same
condition
holds for $\us$. In general the converse is not true, as one can see with some simple examples. However, it
turns out
that if equality holds in the Pólya-Szeg\H{o} inequality, then the two conditions are
equivalent.
\begin{proposition}\label{prop:nu_uguale}
 Let $f$ and $\Omega$ be as in Theorem \ref{thm:equality} and let $u\in\sobo(\Omega)$ be a non-negative function. If
equality \eqref{eq:equality} holds, then 
\begin{equation*}
 \Hn\bigl(\{(x,y)\in\Omega : \nabla_y u(x,y)=0\}\cap\{(x,y)\in\Omega : \text{either\ }
 M(x)=0 \text{\ or\ } u(x,y)<M(x)\}\bigr)=0
\end{equation*}
if and only if
\begin{equation}\label{eq:nu_ug_2}
 \Hn\bigl(\{(x,y)\in\Omegas : \nabla_y \us(x,y)=0\}\cap\{(x,y)\in\Omegas : \text{either\ }
 M(x)=0 \text{\ or\ } \us(x,y)<M(x)\}\bigr)=0\,.
\end{equation}
\end{proposition}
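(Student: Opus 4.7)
The plan is to prove the stronger fact that, under \eqref{eq:equality}, the two sets appearing in \eqref{eq:condizioni_u} and \eqref{eq:nu_ug_2} actually have equal $\Hn$-measure; the biconditional claimed in the proposition follows at once. I would work sectionwise in $x\in\proj(\Omega)$, reducing the problem to a one-slice analysis of $v:=u(x,\cdot):\Rk\to[0,\infty)$ and its Schwarz rearrangement $v^{*}=u^\sigma(x,\cdot)$.

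\medskip

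On the set $\{M(x)=0\}$ the two $x$-sections trivially have equal $\Hk$-measure $L(x)=\Hk(\Omega_x)$, since $v$ and $v^{*}$ vanish on sets of equal $\Hk$-measure. On $\{M(x)>0\}$, the first conclusion of Theorem~\ref{thm:equality} gives that for $\Hnk$-a.e.\ such $x$ the super-level sets $\{v>t\}$ are Euclidean balls for $\Ha^1$-a.e.\ $t\in(0,M(x))$. This forces $t\mapsto\{v>t\}$ to be a nested family of balls, strictly shrinking outside a countable set of plateau levels of $v$. The key identity I would establish is
\[
\Hk\bigl(\{\nabla v=0\}\cap\{v<M(x)\}\bigr)
=\sum_{\substack{0<t<M(x)\\ \Hk(\{v=t\})>0}}\Hk\bigl(\{v=t\}\bigr),
\]
together with the analogous identity for $v^{*}$, which is standard because $v^{*}$ is already a Schwarz rearrangement. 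Equidistribution of $v$ and $v^{*}$ then makes the two right-hand sides coincide, so the $x$-sections of the sets in \eqref{eq:condizioni_u} and \eqref{eq:nu_ug_2} have equal $\Hk$-measure for a.e.\ $x$; integrating in $x$ concludes the proof.

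\medskip

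The only delicate step is the $\leq$ direction of the displayed identity, which amounts to saying that $v$ is \emph{coarea regular} below its essential supremum. General Sobolev functions may fail this (the Almgren--Lieb phenomenon recalled at the end of Section~\ref{sec:states}), but the ball structure of the super-level sets excludes it: at any level $t$ outside the countable plateau set, $\{v=t\}$ is a sphere of $\Hk$-measure zero. Applying the coarea formula to $\chi_{\{\nabla v=0\}}$, whose integral against $|\nabla v|$ vanishes by construction, combined with the continuity of $t\mapsto\{v>t\}$ outside the plateau set (a consequence of the Sobolev regularity of $v$, which forbids jumps across positive-measure hypersurfaces), rules out any positive-measure contribution from non-plateau critical points. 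This is essentially a Sard-type statement adapted to Sobolev functions with ball-shaped super-level sets, and is where the main work of the proof lies; the remaining ingredients (Fubini, equidistribution, and the analogous identity for the genuine Schwarz rearrangement $v^{*}$) are routine.
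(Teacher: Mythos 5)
Your plan hinges on the identity
\[
\Hk\bigl(\{\nabla v=0\}\cap\{v<M(x)\}\bigr)=\sum_{\substack{0<t<M(x)\\ \Hk(\{v=t\})>0}}\Hk\bigl(\{v=t\}\bigr),
\]
claimed for both $v$ and $v^*$; this is false already for $v^*$. Let $A\subset(0,1)$ be a compact nowhere dense set with $\Ha^1(A)>0$ and set $h(r):=\Ha^1\bigl([r,1]\setminus A\bigr)$. Then $h$ is absolutely continuous, strictly decreasing (because $A$ is nowhere dense), and $h'=-\chi_{(0,1)\setminus A}$. Put $v^*(y):=h(\abs{y})$ for $y\in B(0,1)\subset\Rk$ and $v^*\equiv 0$ outside. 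Then $v^*\in W^{1,1}(\Rk)$ is radially decreasing, so it equals its own Schwarz rearrangement; $\nabla v^*$ vanishes exactly on $\{y:\abs{y}\in A\}$, a set of positive $\Hk$-measure; and $v^*$ has no plateau levels, since $h$ is injective, so every level set is a sphere of zero $\Hk$-measure. Thus the right-hand side of your identity is $0$ while the left-hand side is strictly positive. The coarea computation you cite only yields $\Ha^{k-1}\bigl(\{v=t\}\cap\{\nabla v=0\}\bigr)=0$ for $\Ha^1$-a.e.\ $t$, a statement about $(k-1)$-dimensional slices; it gives no control on $\Hk\bigl(\{\nabla v=0\}\bigr)$, because the pushforward of $\Hk\rest\{\nabla v=0\}$ by $v$ may be singular continuous, exactly as in this example. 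Ball-shaped super-level sets therefore do not exclude the Almgren--Lieb coarea irregularity recalled at the end of Section \ref{sec:states}: the example above is even radial and still coarea irregular. Taking $u(x,y):=v^*(y)$ in a cylinder $\Omega$ gives a function satisfying every hypothesis of the proposition with $u=\us$; the biconditional then holds trivially, yet both identities your plan relies on fail.

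The paper's proof takes a different route that sidesteps this issue. It iterates codimension-$1$ Steiner symmetrizations $S_1,\dotsc,S_k$, so that under \eqref{eq:equality} each intermediate Pólya--Szeg\H{o} inequality is forced to be an equality; uses the ball structure of the level sets (Step 1 of the paper's proof, building on Lemma \ref{lemma:sezioni_palle}) to transfer the zero-measure condition from the full gradient $\nabla_y$ to each single partial $\partial_{y_i}$; and descends from $\us$ back to $u$ one codimension at a time via the $k=1$ case, \cite{CF}*{Proposition 2.3}, which ultimately rests on Burchard's theorem \cite{burchard} that codimension-$1$ Steiner symmetrization preserves the measure of the critical set. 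That codimension-$1$ preservation is precisely what your slicing argument would need to establish directly, and it is exactly the property that fails, without further input, for the one-shot codimension-$k$ Schwarz rearrangement. Your proposal therefore has a genuine gap at its central step.
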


We now shift to the more general framework of functions of bounded variation. In this context, it is still possible to
show a Pólya-Szeg\H{o} principle, provided that the involved functional is properly defined. Consider any non-negative
convex function in $\Rn$ growing linearly at infinity, i.e.,  
for all $z\in\Rn$
\begin{equation}\label{eq:f-linear}
 0\leq f(z)\leq C(1+\abs{z})\,,
\end{equation}
for some positive constant $C$. Let us now define the \emph{recession function} $\frec$ of $f$ as
\[\frec(z):=\lim_{t\to +\infty}\frac{f(tz)}{t}\,.\]
Then a standard extension of the functional $\int_\Omega f(\nabla u)$ to the space $BV_\loc(\Omega)$ is defined as
\begin{equation}\label{eq:Jf}
 J_f(u;\Omega):=\int_\Omega f(\nabla u)\,dz+\int_\Omega \frec\left(\frac{\Ds u}{\abs{\Ds u}}\right)\, d\abs{\Ds u}\,.
\end{equation}

Here, $\nabla u$ stands for the approximate gradient of $u$, which agrees with the absolutely continuous part, with
respect to $\Hn$, of the measure $Du$, the distributional derivative of $u$. Also, $\Ds u$ is the singular part with
respect to $\Hn$ and $\abs{\Ds u}$ is its total variation. See the next section for the relevant definitions.
Actually, Theorem \ref{thm:frelax} states that $J_f(u;\Omega)$ coincides with the so-called \emph{relaxed functional} of
$\int_\Omega
f(\nabla u)$ in $BV(\Omega)$ with respect to the $L^1_\loc$-convergence.

Then, a Pólya-Szeg\H{o} principle for functionals of the form \eqref{eq:Jf} holds in the space of
$BV_\loc(\Omega)$ functions
vanishing in some appropriate sense on $\partial \Omega\cap(\proj(\Omega)\times\Ry)$.
To be precise, we set
\begin{multline*}
BV_{0,y}(\Omega):=
\Bigl\{u:\Omega\to\R \mid u_0\in BV(\omega\times\Ry)\text{\ and\ }
\abs{D u_0}(\omega\times\Ry)=\abs{Du}\bigl(\Omega\cap(\omega\times\Ry)\bigr)\\
\text{for\ every\ open\ set\ }\omega\Subset \proj(\Omega)\Bigr\}\,.
\end{multline*}

\begin{theorem}\label{thm:ineq-bv}
 Let $f:\Rn\to[0,+\infty)$ be a convex function vanishing at $0$ and satisfying \eqref{eq:fconvex} and
\eqref{eq:f-linear}. Let $\Omega\subset\Rn$ be an open set and let $u\in BV_{0,y}(\Omega)$ be a non-negative function.
Then $\us\in BV(\omega\times\Ry)$ for every open set $\omega\Subset\proj(\Omega)$ and
\begin{equation}\label{eq:ineq-bv}
 J_f(\us;\Omegas)\leq J_f(u;\Omega)\,.
\end{equation}
\end{theorem}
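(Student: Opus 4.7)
The strategy is to deduce the $BV$ inequality from its Sobolev counterpart Theorem~\ref{thm:disuguaglianza} by approximation. The three ingredients I expect to use are: (i) the relaxation formula from Theorem~\ref{thm:frelax}, which produces a Sobolev sequence achieving the $BV$ energy; (ii) the $L^1$-continuity of Steiner rearrangement stated in \eqref{lemma:s_continuous}; and (iii) the perimeter inequality for Steiner symmetrization in codimension $k$ from \cite{barcafus}, which together with the coarea formula will give the $BV$ regularity of $\us$.

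\step{1 (Approximation and Sobolev inequality)} Fix an open set $\omega\Subset\proj(\Omega)$. It is enough to treat the case $J_f(u;\Omega\cap(\omega\times\Ry))<\infty$, the other being trivial. Using the relaxation characterization of $J_f$ from Theorem~\ref{thm:frelax}, together with the zero-trace condition $\abs{Du_0}(\omega\times\Ry)=\abs{Du}(\Omega\cap(\omega\times\Ry))$ built into $BV_{0,y}(\Omega)$, I select a non-negative sequence $u_h\in\sobo(\Omega)$ such that $u_h\to u$ in $L^1(\omega\times\Ry)$ and
\[
\int_{\omega\times\Ry} f(\nabla u_h)\,dz\longrightarrow J_f(u;\Omega\cap(\omega\times\Ry)).
\]
Theorem~\ref{thm:disuguaglianza} applied to each $u_h$ then gives
\[
\int_{\Omegas\cap(\omega\times\Ry)} f(\nabla u_h^\sigma)\,dz\leq\int_{\Omega\cap(\omega\times\Ry)} f(\nabla u_h)\,dz.
\]

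\step{2 ($BV$ regularity of $\us$ and passage to the limit)} Applying the perimeter inequality for Steiner symmetrization in codimension $k$ to the superlevel sets $\{u>t\}$ and combining with the coarea formula in $BV(\omega\times\Ry)$, I obtain
\[
\abs{D\us}(\omega\times\Ry)=\int_0^{+\infty} P(\{u>t\}^\sigma;\omega\times\Ry)\,dt\leq\int_0^{+\infty} P(\{u>t\};\omega\times\Ry)\,dt=\abs{Du_0}(\omega\times\Ry)<\infty,
\]
so that $\us\in BV(\omega\times\Ry)$. The $L^1$-continuity estimate \eqref{lemma:s_continuous} yields $u_h^\sigma\to\us$ in $L^1(\omega\times\Ry)$. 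Invoking the $L^1_\loc$-lower semicontinuity of $J_f(\,\cdot\,;\Omegas\cap(\omega\times\Ry))$ (also a consequence of Theorem~\ref{thm:frelax}) I conclude
\[
J_f(\us;\Omegas\cap(\omega\times\Ry))\leq\liminf_h\int f(\nabla u_h^\sigma)\,dz\leq J_f(u;\Omega\cap(\omega\times\Ry)).
\]
Exhausting $\proj(\Omega)$ by an increasing sequence of compactly contained open sets $\omega$ produces \eqref{eq:ineq-bv}.

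\textbf{Main obstacle.} The delicate point is the approximation in Step~1: the sequence must lie in $\sobo(\Omega)$, so that Theorem~\ref{thm:disuguaglianza} applies, and its energies must track not only the absolutely continuous part $\int f(\nabla u)\,dz$ but also the singular contribution $\int\frec(\Ds u/\abs{\Ds u})\,d\abs{\Ds u}$. This is precisely the content of the relaxation theorem, and is the only place where the linear growth hypothesis \eqref{eq:f-linear} enters in an essential way.
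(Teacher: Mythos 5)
Your proof is correct in outline, but it takes a genuinely different route from the paper's. The paper proves the $BV$ inequality by converting the functional $J_f$ into a geometric functional on the subgraph $\Su$: it invokes Proposition~\ref{prop:J=F} to write $J_f(u;\Omega)=\int_{\brid\Su\cap(\Omega\times\Rt)}F_f(\nu^{\Su})\,d\Hn$, then applies the purely geometric Pólya--Szeg\H{o} inequality of Theorem~\ref{thm:F-ineq} for the symmetrized subgraph (using the equivalence $(\Su)^\sigma\sim\mathcal{S}_{\us}$), first for compactly supported $u$ and then in general via a cut-off. Your argument instead bypasses the subgraph machinery entirely: you approximate $u$ by a Sobolev recovery sequence $u_h$, apply the Sobolev inequality of Theorem~\ref{thm:disuguaglianza} to each $u_h$, then use $L^1$-continuity of the rearrangement and lower semicontinuity to pass to the limit. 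This is arguably a more economical route, and it makes the $BV$ theorem appear as a corollary of the Sobolev one rather than vice versa.

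Two caveats you should be aware of. First, Theorem~\ref{thm:frelax} as stated in the paper provides only the lower semicontinuity $J_f(u;\Omega)\leq\liminf J_f(u_j;\Omega)$; the existence of a \emph{recovery sequence} (the $\limsup$ bound) is what the surrounding text attributes to the relaxation identity but is not part of the theorem statement. Your Step~1 thus rests on a slightly stronger fact than the one formally available, and it has to be supplied (e.g.\ by mollification of $u_0$ combined with Reshetnyak's continuity theorem, which is the standard proof of the Goffman--Serrin/Anzellotti--Giaquinta relaxation result cited via \cite{AFP}*{Theorem 5.47}). Second, the recovery sequence must be non-negative and genuinely in $W^{1,1}(\omega\times\Ry)$ (hence in $\sobo(\omega\times\Ry)$, the relevant ambient for Theorem~\ref{thm:disuguaglianza}); mollification of the extension $u_0\geq 0$ by a nonnegative kernel handles both points, but it is worth making explicit, since an arbitrary recovery sequence need not preserve the sign and a truncation would require a separate check that the energy is not lost. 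You correctly flag Step~1 as the delicate point, and your $BV$-regularity argument for $\us$ via coarea plus the perimeter inequality in codimension $k$ is a sound alternative to the paper's compactness argument in its Step~1.
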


As before, we are interested in finding suitable conditions 
ensuring that a 
function satisfying the equality in \eqref{eq:ineq-bv} is Steiner
symmetric.
It turns out that one needs the same assumptions
on $u$ and $\Omega$ as in Theorem
\ref{thm:equality}.
Note that now the vector 
$\nabla_y u$ in \eqref{eq:condizioni_u} is the $y$-component of the absolutely continuous part of the measure $Du$.
However, in order to deal with the singular part 
$\Ds u$ of $Du$
we need some extra assumptions on the recession
function $\frec$. We will assume that for every $x\in\Rnk$, setting $\frec(x,y)=\ftrec(x,\abs{y})$,
\begin{equation}\label{eq:frec_incr}
 \ftrec(x,\cdot)\text{\ is\ strictly\ increasing\ on\ }[0,+\infty)
\end{equation}
and that the function
\begin{equation}\label{eq:frec_convex}
 x\mapsto\ftrec(x,1)\text{\ is\ strictly\ convex,}
\end{equation}

\begin{theorem}\label{thm:equal_bv}
 Let $f:\Rn\to [0,+\infty)$ be a strictly convex function vanishing at $0$ and satisfying
\eqref{eq:fconvex}, \eqref{eq:f-linear}, \eqref{eq:frec_incr} and \eqref{eq:frec_convex}. Let $\Omega\subset\Rn$ be an
open set satisfying \eqref{eq:condizioni_om1}$-$\eqref{eq:condizioni_om3} and let $u\in BV_{0,y}(\Omega)$ be a
non-negative function such that
\begin{equation}\label{eq:equal_bv}
 J_f(\us;\Omegas)=J_f(u;\Omega)<+\infty\,,
\end{equation}
Then, for $\Ha^{n-k+1}\aev (x,t)\in\esspj(\Su)$ there exists  $R(x,t)>0$ such
that the set
\[
 \{y:u(x,y)>t\}\text{\ is equivalent to }\{\abs{y}<R(x,t)\}\,.
\]
If in addition $u$ satisfies condition \eqref{eq:condizioni_u}, 
then $u$ is equivalent to $\us$ up to a translation in the $y$-plane.
\end{theorem}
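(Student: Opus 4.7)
The plan is to recast the functional $J_f(u;\Omega)$ as an anisotropic perimeter integral over the reduced boundary of the subgraph $\Su\subset\R^{n+1}$, and then reduce the equality \eqref{eq:equal_bv} to the equality case of the codimension-$k$ Steiner symmetrization inequality for sets of finite perimeter, applied to $\Su$. The slice-level conclusion on $\{y:u(x,y)>t\}$ then follows from the fibre structure of $\Su$, while the translation rigidity is obtained from \eqref{eq:condizioni_u} in the spirit of Theorem \ref{thm:equality}.

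First I would construct a convex, positively one-homogeneous function $F\colon\R^{n+1}\to[0,+\infty)$ such that for every $u\in BV_{0,y}(\Omega)$
\begin{equation*}
J_f(u;\Omega)=\int_{\brid\Su\cap(\Omega\times\Rt)}F(\nu^{\Su})\,d\Hn,
\end{equation*}
where $\nu^{\Su}$ is the generalized inner normal to the subgraph. The tilted part of $\brid\Su$ (where the $t$-component of $\nu^{\Su}$ is non-zero) then reproduces $\int f(\nabla u)$ via the homogenization $F(\xi_x,\xi_y,\xi_t)=\abs{\xi_t}\,f(-\xi_x/\xi_t,-\xi_y/\xi_t)$, while the vertical part reproduces $\int\frec(\Ds u/\abs{\Ds u})\,d\abs{\Ds u}$. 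Assumptions \eqref{eq:f-linear}, \eqref{eq:frec_incr}, \eqref{eq:frec_convex} together with the strict convexity of $f$ are used precisely to endow $F$ with the strict convexity and the strict monotonicity in $\abs{\xi_y}$ at fixed $(\xi_x,\xi_t)$ that are needed to exclude non-symmetric equality cases, both on the tilted and on the vertical directions.

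Next, by \eqref{eq:su_equiv_sus} the sets $(\Su)^\sigma$ and $\Sus$ coincide $\Ha^{n+1}$-almost everywhere, so \eqref{eq:equal_bv} becomes exactly the equality case of the anisotropic inequality
\begin{equation*}
\int_{\brid(\Su)^\sigma}F(\nu^{(\Su)^\sigma})\,d\Hn\leq\int_{\brid\Su}F(\nu^{\Su})\,d\Hn
\end{equation*}
for the Steiner symmetrization in codimension $k$ of sets in $\R^{n+1}$ inside the cylinder $\proj(\Omega)\times\Ry\times\Rt$. Hypotheses \eqref{eq:condizioni_om1}$-$\eqref{eq:condizioni_om3} transfer to this ambient cylinder, since $\proj(\Omega)\times\Rt$ remains connected and the reduced boundary of $\Omega\times\Rt$ inherits non-verticality in $y$ from that of $\Omega$. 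An anisotropic, strictly convex extension of the set-level rigidity result of \cite{barcafus} then yields that the section $(\Su)_{x,t}=\{y:u(x,y)>t\}$ is equivalent to a ball $\{\abs{y}<R(x,t)\}$ for $\Ha^{n-k+1}$-a.e.\ $(x,t)\in\esspjt(\Su)$, which is the first assertion.

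For the translation rigidity, Proposition \ref{prop:nu_uguale} and \eqref{eq:condizioni_u} give $\nabla_y\us\neq 0$ a.e.\ on $\{\us<M\}$, so the radii $R(x,t)$ are strictly decreasing in $t$ for $\Hnk$-a.e.\ $x$; one can then attach to each slice $\{y:u(x,y)>t\}$ a well-defined centre $c(x,t)\in\Ry$. The equality case, together with the connectedness of $\proj(\Omega)$ and the non-verticality of $\brid\Omega$, forces $c(x,t)$ to be independent of both $x$ and $t$, producing the desired translation. The hardest step is the construction of $F$ and the verification of its strict convexity uniformly across the transition between tilted and vertical directions, in such a way that \eqref{eq:frec_incr}$-$\eqref{eq:frec_convex} alone suffice to preserve rigidity on the singular part of $Du$. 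A secondary but genuine obstacle is the presence of coarea irregular functions, for which the Steiner rearrangement is discontinuous in $BV$ when $k\geq 2$: this is why the whole argument must be carried out directly at the level of the subgraphs, and not by smoothing $u$ and passing to limits.
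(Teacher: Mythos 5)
Your overall strategy matches the paper's: recast $J_f$ as an anisotropic perimeter integral $\int_{\brid\Su}F_f(\nu^{\Su})\,d\Hn$ over the subgraph via a one-homogeneous $F_f$ (this is exactly Proposition \ref{prop:J=F} and Lemma \ref{lemma:f_F}), invoke \eqref{eq:su_equiv_sus} to identify $(\Su)^\sigma$ with $\Sus$, and then reduce \eqref{eq:equal_bv} to the equality case of the set-level anisotropic inequality \eqref{eq:bv_in_aux}. The first claim (that a.e.\ $(x,t)$-section is a ball) indeed follows from the set-level rigidity --- in the paper this is Proposition \ref{thm:equal_F}, a genuine extension of \cite{barcafus} proved here, not a black box --- after a localization with cutoff functions to guarantee $\Su$ has finite perimeter, a technicality you elide. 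That part of your sketch is essentially right.

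Where the proposal has a genuine gap is the translation rigidity. You assert that \emph{``the equality case, together with the connectedness of $\proj(\Omega)$ and the non-verticality of $\brid\Omega$, forces $c(x,t)$ to be independent of both $x$ and $t$''}, but give no mechanism whatsoever. This is the entire difficulty, and the auxiliary observation that $R(x,t)$ decreases strictly in $t$ is irrelevant to it: what must be controlled is the distributional derivative of the barycentre $\beta(x,t)=b(x,t)_y$. The paper handles this in two ways and your sketch matches neither. One route is Theorem \ref{thm:SteinerF}, which shows $\nabla\beta=0$ by combining equalities in \eqref{eq:catena1}--\eqref{eq:catena2} with the differentiability of slicewise barycentres (\cite{barcafus}*{Theorem 4.3}) and a one-dimensional Sobolev argument, but this requires the additional hypotheses \eqref{eq:prec_repr} and \eqref{eq:abc}, and a convexity condition \eqref{eq:Kconvex}. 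The route the paper actually takes in the proof of Theorem \ref{thm:equal_bv} is different: it \emph{iterates codimension-one Steiner symmetrizations} $S_1,\dots,S_k$, uses $\Omegas=(\Omega^{S_i})^\sigma$ to force equality in the chain \eqref{eq:pr_bv_ug1}, verifies the codimension-one hypotheses on each intermediate $u^{S_i}$ and $\Omega^{S_i}$ (via the ball-section claim, the argument of Proposition \ref{prop:nu_uguale}, and a slicing of \eqref{eq:condizioni_om3}), and then invokes the codimension-one rigidity result \cite{CF}*{Theorem 2.5} to pin down each coordinate $b(x,t)_{y_i}\equiv c_i$ one at a time. Without either of these arguments --- or some explicit substitute --- the second half of your proof is an assertion, not a proof.

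One smaller point: you are right that coarea-irregular functions preclude smoothing arguments and force one to work on subgraphs, but note the paper also needs Proposition \ref{prop:nu_uguale} precisely to transfer condition \eqref{eq:condizioni_u} back and forth between $u$ and $\us$ in the presence of this phenomenon; simply stating that one works ``directly at the level of subgraphs'' does not by itself dispose of the difficulty, and your sketch does not make clear where Proposition \ref{prop:nu_uguale} enters.
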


The strategy in proving Theorems \ref{thm:ineq-bv} and \ref{thm:equal_bv} is to convert the functional $J_f$ into a
geometrical functional depending on the generalized inner normal and having the form
\begin{equation}\label{eq:Fnu}
 \int_{\brid E} F(\nu^E)\,d\Hn\,.
\end{equation}
Here, $F:\R^{n+1}\to [0,+\infty]$ is a convex function positively 1-homogeneous vanishing in $0$, i.e., for every
$\lambda>0$ and 
$(\xi_1,\dotsc,\xi_{n+1})\in\R^{n+1}$
\begin{equation}\label{eq:F-hom}
 F(\lambda\xi_1,\dotsc,\lambda\xi_{n+1})=\lambda F(\xi_1,\dotsc,\xi_{n+1})\quad \text{\ and\ }F(0)=0\,.
\end{equation}

Let us define 
\begin{equation}\label{eq:F_f}
  F_f(\xi_1,\dotsc,\xi_{n+1}):=
 \begin{cases}
f\left(-\frac{1}{\xi_{n+1}}(\xi_1,\dotsc,\xi_n)\right)(-\xi_{n+1}) & \text{\ if\ } \xi_{n+1}<0 \, , \\
\frec(\xi_1,\dotsc,\xi_n) & \text{\ if\ }\xi_{n+1}\geq 0\,.
\end{cases}
\end{equation}
The following result gives the link between the functional $J_f$ and the functional in \eqref{eq:Fnu}.
\begin{proposition}[\cite{CF}*{Proposition 2.7}]\label{prop:J=F}
 Let $f:\Rn\to [0,+\infty)$ be a convex function vanishing at $0$ and satisfying \eqref{eq:f-linear}. Then $F_f$ is a
convex function satisfying \eqref{eq:F-hom}. Moreover, if $\Omega\subset\Rn$ is an open set, then for every
non-negative function $u\in BV_\loc(\Omega)$ 
\begin{equation}\label{eq:J=F}
 J_f(u;\Omega)=\int_{\brid \Su\cap (\Omega\times\Rt)} F_f(\nu^{\Su})\,d\Hn\,.
\end{equation}
\end{proposition}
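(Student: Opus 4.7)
The plan has two parts: (a) verify that $F_f$ is convex and positively $1$-homogeneous, and (b) establish the identity \eqref{eq:J=F}. I would first verify (a) by identifying $F_f$ on $\{\xi_{n+1}<0\}$ with the perspective transform of $f$, and then prove (b) by splitting $\brid\Su\cap(\Omega\times\Rt)$ into a ``graph part'' (where the $(n+1)$-component of the inner normal is non-zero) and a ``vertical part'' (where it vanishes), converting each piece into the corresponding term in \eqref{eq:Jf}.

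For (a), positive $1$-homogeneity and $F_f(0)=0$ follow directly from \eqref{eq:F_f} together with the $1$-homogeneity of $\frec$. For convexity, when $\xi_{n+1}<0$ set $s:=-\xi_{n+1}>0$ and $\eta:=(\xi_1,\ldots,\xi_n)$, so that $F_f(\xi)=s\,f(\eta/s)$ is the classical perspective transform of $f$, hence convex in $(\eta,s)$. Its canonical lower-semicontinuous extension to $s=0^+$ is precisely $\frec$, so \eqref{eq:F_f} is convex on the closed half-space $\{\xi_{n+1}\leq 0\}$. Extending by $\frec$ on $\{\xi_{n+1}>0\}$ preserves convexity since $\frec$ depends only on $(\xi_1,\ldots,\xi_n)$ and is itself convex on $\Rn$.

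For (b), since $u\in BV_\loc(\Omega)$ and $u\geq 0$, the subgraph $\Su$ has locally finite perimeter in $\Omega\times\Rt$. Split $\brid\Su\cap(\Omega\times\Rt)=G_u\cup V_u$ with $G_u:=\{\nu^{\Su}_{n+1}\neq 0\}$ and $V_u:=\{\nu^{\Su}_{n+1}=0\}$. On $G_u$, fine properties of $BV$ functions give $\nu^{\Su}(z,t)=(\nabla u(z),-1)/\sqrt{1+|\nabla u(z)|^2}$ at points $(z,\tilde u(z))$ where $z$ is an approximate continuity point of $u$, and the area formula yields $d\Hn=\sqrt{1+|\nabla u|^2}\,dz$ on $G_u$. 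The $1$-homogeneity of $F_f$ together with $F_f(\eta,-1)=f(\eta)$ then gives
\[\int_{G_u}F_f(\nu^{\Su})\,d\Hn=\int_\Omega F_f(\nabla u,-1)\,dz=\int_\Omega f(\nabla u)\,dz.\]
On $V_u$ the inner normal has the form $(\nu^{\Su}_z,0)$ with $|\nu^{\Su}_z|=1$, so $F_f(\nu^{\Su})=\frec(\nu^{\Su}_z)$. The crucial step is a Vol'pert-type identification of $\Ds u$ with the $\Rn$-valued measure $\nu^{\Su}_z\,\Hn\rest V_u$ on $\Omega$; granting this, $|\Ds u|=\Hn\rest V_u$ and $\Ds u/|\Ds u|=\nu^{\Su}_z$ almost everywhere, and $1$-homogeneity of $\frec$ gives
\[\int_{V_u}\frec(\nu^{\Su}_z)\,d\Hn=\int_\Omega \frec\bigl(\Ds u/|\Ds u|\bigr)\,d|\Ds u|.\]
Summing the two contributions produces \eqref{eq:J=F}.

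The main difficulty is establishing the Vol'pert-type identification of $\Ds u$ with $\nu^{\Su}_z\,\Hn\rest V_u$ for the Cantor component of $Du$, where no jump structure is directly available. I would first prove it by Fubini slicing in the $t$-direction for functions that are piecewise constant plus smooth, and then extend to general $u\in BV_\loc(\Omega)$ by density in the strict topology of $BV$, using $L^1_\loc$-lower semicontinuity of $v\mapsto P(\mS_v;\Omega\times\Rt)$ together with Reshetnyak's continuity theorem for integrals of the form $\int F(\nu^E)\,d\Hn$ with $F$ convex and positively $1$-homogeneous, allowing me to pass to the limit on both sides of \eqref{eq:J=F} simultaneously.
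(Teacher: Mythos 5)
The paper does not prove Proposition~\ref{prop:J=F}; it cites \cite{CF}*{Proposition 2.7} and uses the statement as a black box (the one ingredient the paper itself records that is needed for identity~\eqref{eq:J=F} is Theorem~\ref{thm:GMS2}, the Giaquinta--Modica--Sou\v{c}ek description of $\nu^{\Su^-}$). So there is no in-paper argument to compare against, and your proposal must stand on its own.

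The convexity step in part (a) has a genuine gap. You are right that on $\{\xi_{n+1}<0\}$, $F_f$ is the perspective transform of $f$ (convex) and that its lower-semicontinuous extension to $\{\xi_{n+1}=0\}$ is $\frec$. But the last sentence, ``extending by $\frec$ on $\{\xi_{n+1}>0\}$ preserves convexity since $\frec$ is convex in $(\xi_1,\ldots,\xi_n)$,'' is false as stated. With $s:=-\xi_{n+1}$, the perspective $s\mapsto sf(\eta/s)$ is \emph{nonincreasing} on $s>0$ (because $t\mapsto f(t\eta)/t$ is nondecreasing when $f$ is convex with $f(0)=0$) and equals $\frec(\eta)$ at $s=0^+$; continuing by the constant $\frec(\eta)$ into $s<0$ therefore creates a concave kink at $s=0$ unless $f$ is already $1$-homogeneous. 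Concretely, take $n=1$, $f(z)=\sqrt{1+z^2}-1$: then $F_f(1,\tau)=\sqrt{1+\tau^2}+\tau$ for $\tau<0$ and $F_f(1,\tau)=1$ for $\tau\geq 0$, and $F_f(1,0)=1>\tfrac12\bigl(F_f(1,-1)+F_f(1,1)\bigr)=\tfrac12(\sqrt2-1+1)$. (This also means the literal assertion in \cite{CF}*{Proposition 2.7} that $F_f$ is globally convex needs to be read as convexity on the closed half-space $\{\xi_{n+1}\leq 0\}$ --- which is the only region that matters, since the inner normal to a subgraph of a nonnegative function has $\nu^{\Su}_{n+1}\leq 0$ except on the bottom face $\{t=0\}$, where $\nu^{\Su}=(0,\dots,0,1)$ and $F_f=\frec(0)=0$.) You should at minimum restrict your convexity claim to that half-space, which is all you use in part (b).

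For part (b), the decomposition of $\brid\Su\cap(\Omega\times\Rt)$ into a graph part and a vertical part, and the computation on each piece using $1$-homogeneity of $F_f$, are correct and are the natural argument. Two caveats. First, the ``Vol'pert-type identification'' is exactly \cite{GMS}*{\S 4.1.5, Theorems 4 and 5}, i.e.\ Theorem~\ref{thm:GMS2} of this paper; there is no need to re-derive it, and attempting to do so via the strict topology is risky. Strict $BV$ convergence ($u_j\to u$ in $L^1$ and $|Du_j|(\Omega)\to|Du|(\Omega)$) does not in general give $P(\Su_j;\Omega\times\Rt^+)\to P(\Su;\Omega\times\Rt^+)$, which is what Reshetnyak continuity needs on the right-hand side of \eqref{eq:J=F}; one must use the finer \emph{area-strict} convergence (i.e.\ convergence of $\int\sqrt{1+|\nabla u_j|^2}+|\Ds u_j|(\Omega)$), which mollification does provide. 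Second, your sentence ``$|\Ds u|=\Hn\rest V_u$ and $\Ds u/|\Ds u|=\nu^{\Su}_z$ a.e.'' mixes measures on $\Omega$ with measures on $\Omega\times\Rt$: what is true is that the pushforward under the projection $\pi:\Omega\times\Rt\to\Omega$ satisfies $\pi_\#\bigl(\nu^{\Su}_z\,\Hn\rest V_u\bigr)=\Ds u$ and $\pi_\#\bigl(\Hn\rest V_u\bigr)=|\Ds u|$; since $\nu^{\Su}_z$ is constant along the vertical fibers over any singular point, this still yields $\int_{V_u}\frec(\nu^{\Su}_z)\,d\Hn=\int_\Omega\frec\bigl(\Ds u/|\Ds u|\bigr)\,d|\Ds u|$, but the bookkeeping should be made explicit.
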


This allows us to reduce the proof of Theorem \ref{thm:ineq-bv} to the proof of a Pólya-Szeg\H{o} inequality for
functionals of the form \eqref{eq:Fnu}, where in addition we assume that $F$ is radial with respect to the $y$
variables,
i.e., there exists a function $\Ft:\R^{n-k+2}\to [0,+\infty]$ such that
\begin{equation}\label{eq:F-radial}
 F(x,y,t)=\Ft(x,\abs{y},t)\,,
\end{equation}
for every $(x,y,t)\in\R^{n+1}$.
Clearly, the function $\Ft$ is convex and positively 1-homogeneous.

It turns out that if $F$ satisfies \eqref{eq:F-hom} and \eqref{eq:F-radial} and if $E\subset\R^{n+1}$ is a set of
finite perimeter, then
\begin{equation}\label{eq:bv_in_aux}
 \int_{\brid E^\sigma} F(\nu^{E^\sigma})\,d\Hn\leq \int_{\brid E}F(\nu^E)\,d\Hn\,,
\end{equation}
see Theorem \ref{thm:F-ineq}. Then, Theorem \ref{thm:equal_bv} is proved thanks to Proposition \ref{prop:J=F} and to
a first characterization  of the equality cases in \eqref{eq:bv_in_aux} contained in Proposition \ref{thm:equal_F}.
In addition, an essentially complete characterization of the equality cases in \eqref{eq:bv_in_aux} is given by
Theorem  \ref{thm:SteinerF}.

Here, we want to point out that in order to give the characterization of the equality cases in \eqref{eq:equality} one
has to face with an extra difficulty. In fact, writing up
\[\begin{split}
 \lambda_u(x,t)&=\Ha^k\bigl(\{y\in\Rk : u_0(x,y)>t\}\cap\{\nabla_y u \neq 0\}\bigr)+
 \Ha^k\bigl(\{y\in\Rk : u_0(x,y)>t\}\cap\{\nabla_y u = 0\}\bigr)\\
  &=:\lambda^1_u(x,t)+\lambda_u^2(x,t)\,,
\end{split} \]
it turns out that $\lambda_u^1(x,t) \in W^{1,1}_\loc(\Rnk\times\Rt^+)$, while $\lambda^2$ is just a $BV$ function.
However, when $k=1$ the distributional derivative $D\lambda_u^2$ is purely singular with respect to the Lebesgue measure
on $\Rnk\times\Rt^+$, while if $k>1$ the measure $D\lambda_u^2$ may contain also a non-trivial absolutely continuous
part.  This fact was first observed in a celebrated paper by Almgren and Lieb \cite{AL} who
showed that this phenomenon may occur even if $u$ is a $C^1$ function.

\section{Background}\label{sec:background}
Given an open set $\Omega\subset\Rn$, we denote with $BV(\Omega)$ the class of functions of bounded variation, i.e.,
the family of functions in $L^1(\Omega)$ whose distributional gradient $Du$ is a vector-valued Radon measure in $\Omega$
of finite total variation $\abs{Du}(\Omega)$.
The space $BV_\loc(\Omega)$ is defined accordingly. 
By Lebesgue's Decomposition Theorem, the measure $Du$ can be split, with respect to the Lebesgue measure, in two
parts, the absolutely continuous part $\Da u$ and the singular part $\Ds u$. It turns out that $\Da u$ agrees
$\Hn$-a.e.
with $\nabla u$, the approximate gradient of $u$ (see, e.g., \cite{AFP}*{Definition 3.70}). Moreover, the set
$\mathcal{D}_u$ of all
points where $u$ is approximately differentiable satisfies $\abs{\Ds u}(\mathcal{D}_u)=0$ --- see, e.g., 
\cite{EG}*{\S 6.1, Theorem 4} or \cite{AFP}*{Theorem 3.83}.

A  measurable set $E\subset\Rn$ is said to be of \emph{finite perimeter} in an open set $\Omega\subset\Rn$ if
$D\chi_E$ is a vector-valued Radon measure with finite total variation in $\Omega$. The perimeter of $E$ in a Borel
subset $B$ of $\Omega$ is defined as $P(E;B):=\abs{D\chi_E}(B)$. For $B=\Rn$ we will simply write $P(E)$; if $\chi_E\in
BV_\loc(\Omega)$ then we say that $E$ has \emph{locally finite perimeter} in $\Omega$.

Denote by $u_x$ the function $u_x:\Omega_x\to\R$ defined by setting $u_x(y):=u(x,y)$ for all $x\in\proj(\Omega),\,
y\in\Omega_x$. From \cite{AFP}*{Theorems 3.103 and 3.107} we easily infer that for $\Hnk\aev x\in\proj(\Omega)$ the
function $u_x$ belongs to $BV(\Omega_x)$ and that
\begin{equation}\label{eq:derivata_sezione}
 \partial_{i} u_x(y)=\partial_{y_i} u(x,y),\,i=1,\dotsc,k\,,\text{\ for\ }\Hk\text{\aev}y\in\Omega_x\,.
\end{equation}

The following theorem (see \cite{GMS}*{\S 4.1.5, Theorem 1}) completely characterizes functions of bounded variation in
terms of their subgraphs. Let us remark that a slightly different notion of subgraph is needed here.
Given a function $u:\Omega\subset\Rn\to\R$, we set
\[\Su^-:=\{(x,y,t)\in\R^{n+1}:(x,y)\in\Omega,\,t<u(x,y)\}\,.\]
\begin{theorem}\label{thm:Su-}
 Let $\Omega\subset \Rn$ be a bounded open set and let $u\in L^1(\Omega)$. Then $\Su^-$ is a set of finite perimeter in
$\Omega\times\Rt$ if and only if $u\in BV(\Omega)$. Moreover, in this case,
 \[
  P(\Su^-;B\times\Rt)=\int_B\sqrt{1+\abs{\nabla u}^2}\,dz+\abs{\Ds u}(B)
 \]
for every Borel set $B\subset \Omega$.
\end{theorem}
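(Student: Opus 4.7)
The plan is to first verify the area-type identity for smooth $u$, then extend one direction of the inequality to $BV$ by approximation and lower semicontinuity, and obtain the opposite inequality (together with the converse implication) by a relaxation argument.

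\textbf{Smooth case.} For $u\in C^1(\Omega)\cap W^{1,1}(\Omega)$, inside $\Omega\times\Rt$ the boundary $\partial\Su^-$ coincides with the graph $\{(z,u(z)):z\in\Omega\}$, and the outer unit normal to the subgraph reads $\nu=(-\nabla u,1)/\sqrt{1+\abs{\nabla u}^2}$. The classical area formula applied to the map $z\mapsto(z,u(z))$ gives
\[
 P(\Su^-;B\times\Rt)=\Hn\bigl(\{(z,u(z)):z\in B\}\bigr)=\int_B\sqrt{1+\abs{\nabla u}^2}\,dz,
\]
which matches the stated formula since $\abs{\Ds u}=0$.

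\textbf{From $BV$ to finite perimeter.} Given $u\in BV(\Omega)$, pick a sequence $u_j\in C^\infty(\Omega)\cap W^{1,1}(\Omega)$ strictly converging to $u$ in the area sense, i.e.\ $u_j\to u$ in $L^1(\Omega)$ and
\[
 \int_\Omega\sqrt{1+\abs{\nabla u_j}^2}\,dz\;\longrightarrow\;\int_\Omega\sqrt{1+\abs{\nabla u}^2}\,dz+\abs{\Ds u}(\Omega),
\]
as provided by the Anzellotti--Giaquinta / Reshetnyak approximation for the area functional. Fubini gives
\[
 \norm{\chi_{\mathcal{S}_{u_j}^-}-\chi_{\Su^-}}_{L^1(\Omega\times\Rt)}=\norm{u_j-u}_{L^1(\Omega)}\to 0,
\]
so lower semicontinuity of the perimeter combined with the smooth case yields
\[
 P(\Su^-;U\times\Rt)\le\int_U\sqrt{1+\abs{\nabla u}^2}\,dz+\abs{\Ds u}(U)
\]
for every open $U\subset\Omega$. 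Outer regularity of the Radon measure $B\mapsto P(\Su^-;B\times\Rt)$ then promotes this to arbitrary Borel $B\subset\Omega$.

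\textbf{Reverse inequality and converse implication.} For the opposite direction I would invoke the Goffman--Serrin relaxation theorem: the functional $u\mapsto\int_\Omega\sqrt{1+\abs{\nabla u}^2}\,dz+\abs{\Ds u}(\Omega)$ is the $L^1(\Omega)$-relaxation on $BV(\Omega)$ of the smooth area integral. Since $u\mapsto P(\Su^-;\Omega\times\Rt)$ is itself $L^1$-lower semicontinuous (again by the Fubini identity for symmetric differences of subgraphs) and coincides with the area integral on $C^1$ functions, it dominates the relaxation, hence
\[
 \int_\Omega\sqrt{1+\abs{\nabla u}^2}\,dz+\abs{\Ds u}(\Omega)\le P(\Su^-;\Omega\times\Rt),
\]
and localization to Borel $B$ follows from regularity of the two measures. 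In particular, finiteness of the perimeter of $\Su^-$ forces $u\in BV(\Omega)$, yielding the converse.

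\textbf{Main obstacle.} The delicate point is the $\ge$ inequality, which rests on identifying the relaxation of the area functional and on matching the singular part $\abs{\Ds u}$ exactly. A direct alternative via the divergence theorem with test fields of the form $\Phi(z,t)=\phi(z)\zeta_N(t)\,e_i$ must absorb the fact that $u$ need not be bounded; this can be handled by truncating $u$ to $u\wedge N$ and using that $\mathcal{S}_{u\wedge N}^-$ is carved out of $\Su^-$ by the smooth halfspace $\{t<N\}$, so its perimeter is controlled and one may pass to the limit as $N\to\infty$.
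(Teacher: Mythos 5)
The paper does not prove this statement at all; it is quoted verbatim from \cite{GMS}*{\S 4.1.5, Theorem 1}, where the result is established as part of the theory of Cartesian currents, ultimately via a duality/divergence-theorem argument. So there is no in-paper proof to compare against, and your proposal must be judged on its own merits.

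Your smooth case and the ``$\le$'' direction are essentially sound (modulo a small localization point noted below). However, the argument you give for the reverse inequality is backwards. Let $F$ denote the area functional restricted to $C^1\cap W^{1,1}$ (and $+\infty$ elsewhere), $\bar F$ its $L^1$-lsc relaxation, and $G(u):=P(\Su^-;\Omega\times\Rt)$. You correctly observe that $G$ is $L^1$-lsc and that $G=F$ on $C^1\cap W^{1,1}$; but since $F=+\infty$ off that class, these two facts give $G\le F$, hence $G\le\bar F$ by maximality of the relaxation. That is the \emph{same} inequality you already derived by mollification, not its converse. Nothing in the relaxation machinery forces a lsc functional that agrees with $F$ on smooth functions to \emph{dominate} $\bar F$; such a functional could well be strictly smaller (e.g., the constant $0$ satisfies lower semicontinuity but of course not agreement — the point is that lsc + agreement gives domination \emph{by} $\bar F$, never \emph{of} it). Consequently the reverse inequality, and with it the converse implication ``finite perimeter $\Rightarrow u\in BV$,'' are not established by your main argument.

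The correct route for the lower bound — which you gesture at under ``Main obstacle'' but do not carry out — is the direct duality computation: write $P(\Su^-;U\times\Rt)$ as a supremum of $\int_{\Su^-}\operatorname{div}\Phi$ over vector fields $\Phi\in C^1_c(U\times\Rt;\R^{n+1})$ with $\abs{\Phi}\le 1$, choose $\Phi=(\phi(z)\zeta_N(t)\,g(z),\,\phi(z)\zeta_N(t)\,h(z))$ with $\abs{g}^2+h^2\le 1$, compute the left-hand side by Fubini and integration by parts against $Du$, and then take a supremum in $g,h$ to recover $\sqrt{1+\abs{\nabla u}^2}$ pointwise and the full total variation of $D^s u$. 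The truncation $u\wedge N$ you mention does handle the unboundedness, but the heart of the $\ge$ inequality is this dual computation, and it needs to be written out. For the converse implication one can alternatively argue directly: finiteness of $P(\Su^-;\Omega\times\Rt)$ and a Fubini slicing of the measure $D\chi_{\Su^-}$ in the $t$-variable produce a finite vector measure on $\Omega$ which is the distributional gradient of $u$.

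A secondary remark on the ``$\le$'' step: the Anzellotti–Giaquinta strict approximation controls the area mass on $\Omega$, and weak-* convergence of measures only gives $\mu(U)\le\liminf\mu_j(U)$ on open $U$, which is the wrong direction for bounding $\liminf\int_U\sqrt{1+\abs{\nabla u_j}^2}$ from above by the limit area measure of $U$. One should instead mollify and use Jensen's inequality on $U\Subset\Omega$ to obtain an upper bound on $U$ by the area measure of $\overline U$, and then pass to Borel $B$ by outer regularity. This is repairable, but as written the localization is not justified.
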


Let $E$ be a set of finite perimeter in an open set $\Omega\subset\Rn$. For $i=1,\dotsc,n$ we denote by $\nu^E_i$ 
the derivative of the measure $D_i\chi_E$ with respect to $\abs{D\chi_E}$. Then, the \emph{reduced
boundary} $\brid E$ of $E$ consists of all points $ z $ of $\Omega$ such that the vector
$\nu^E( z ):=(\nu_1^E( z ),\dotsc,\nu_n^E( z ))$ exists and satisfies $\abs{\nu^E( z )}=1$. The vector $\nu^E( z )$ is
called the \emph{generalized inner normal} to $E$ at $z$. Moreover (see, e.g., {\cite{AFP}*{Theorem 3.59}), the
following formulae hold:
\begin{equation}\label{eq:DChi}\begin{split}
 D\chi_E&=\nu^E\Ha^{n-1}\rest\brid E\\
 \abs{D\chi_E}&=\Ha^{n-1}\rest \brid E\\
 \abs{D_i \chi_E}&=\abs{\nu^E_i}\Ha^{n-1}\rest\brid E\quad \text{for }i=1,\dotsc,n\,.
\end{split}\end{equation}

Given any measurable set $E\subset \Rn$, the \emph{density} of $E$ at $x$ is defined as
\[
 \Theta(E,x):=\lim_{r\to 0} \frac{\Hn\bigl(E\cap B(x,r)\bigr)}{\Hn\bigl(B(x,r)\bigr)}\,,
\]
provided that the limit on the right-hand side exists. Then, the \emph{measure theoretic boundary} of $E$ is the Borel
set defined as
\[
 \bmis E:=\Rn\setminus \{x\in\Rn : \text{either }\Theta(E,x)=0\text{ or }\Theta(E,x)=1\}\,.
\]
Given any two  measurable sets $E_1$ and $E_2$ in $\Rn$, we have
\begin{equation}\label{eq:measure_bdry}
 \bmis(E_1\cup E_2)\cup\bmis(E_1\cap E_2)\subset \bmis E_1\cup\bmis E_2\,.
\end{equation}
Moreover, if a set $E$ has locally finite perimeter in $\Omega$, the following holds (see, e.g., 
\cite{AFP}*{Theorem~ 3.61})
\begin{equation}\label{AZZ_(3.5)}
 \brid E\cap\Omega \subset \bmis E\cap \Omega\quad\text{\ and\ }\quad\Ha^{n-1}\bigl((\bmis E\setminus \brid
E)\cap\Omega\bigr)=0\,.
\end{equation}

The reduced boundary  of level sets plays an important role in the \emph{coarea formula} for functions of bounded
variations. In
its general version (see, e.g., \cite{AFP}*{Theorem 3.40}), it says that if $g:\Omega\to[0,+\infty]$ is any Borel
function and $u\in BV(\Omega)$, then
\begin{equation}\label{eq:gencoarea}
 \int_\Omega g\, d\abs{Du}=\int_{-\infty}^{+\infty}dt\int_{\Omega\cap\brid\{u>t\}}g\,d\Ha^{n-1}.
\end{equation}

The following proposition is a special case of the coarea formula for rectifiable sets (see \cite{AFP}*{Theorem 2.93})
\begin{proposition}
Let $\Omega\subset\Rn$ be an open set and let
E be a set of finite perimeter in $\Omega$. Let $g:\Omega\to[0,+\infty]$ be a Borel
function.
Then
 \begin{equation}\label{eq:coarea}
  \int_{\brid E \cap\Omega} g(z)\,\abs{\nu^\Omega_y(z)}\,d\Ha^{n-1}(z)=
  \int_{\proj(\Omega)} dx\int_{(\brid E\cap \Omega)_x} g(x,y)\,d\Ha^{k-1}(y).
 \end{equation}
\end{proposition}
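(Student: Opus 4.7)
The plan is to view the statement as an instance of the general coarea formula for countably rectifiable sets (\cite{AFP}*{Theorem 2.93}), applied to the Lipschitz projection $\pi_{n-k}\colon\R^n\to\Rnk$ restricted to the rectifiable set $\brid E\cap\Omega$. (I read $\nu^\Omega_y$ in the statement as a typo for $\nu^E_y$, since the normal of $\Omega$ plays no role and $\Omega$ is merely open.)

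First, by De Giorgi's structure theorem, $\brid E\cap\Omega$ is countably $\Ha^{n-1}$-rectifiable, with approximate tangent hyperplane at each $z\in\brid E$ equal to $T_z:=\bigl(\nu^E(z)\bigr)^\perp$. Since $(n-1)\geq (n-k)$, the rectifiable coarea formula with target map $\pi_{n-k}$ yields
\[
 \int_{\brid E\cap\Omega}g(z)\,J^{n-k}\pi_{n-k}(z)\,d\Ha^{n-1}(z)
 =\int_{\Rnk}dx\int_{(\brid E\cap\Omega)\cap\pi_{n-k}^{-1}(x)}g\,d\Ha^{k-1}(y)\,,
\]
where $J^{n-k}\pi_{n-k}(z)$ denotes the $(n-k)$-dimensional tangential Jacobian of $\pi_{n-k}$ along the approximate tangent plane $T_z$.

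The key step is the identification $J^{n-k}\pi_{n-k}(z)=\bigl|\nu^E_y(z)\bigr|$. Fix $z\in\brid E$, write $\nu:=\nu^E(z)=(\nu_x,\nu_y)\in\Rnk\times\Rk$, and pick an orthonormal basis $\{v_1,\dotsc,v_{n-1}\}$ of $T_z$. Since $\{v_1,\dotsc,v_{n-1},\nu\}$ is an orthonormal basis of $\R^n$, one has $\sum_{l}v_lv_l^{T}=I_n-\nu\nu^{T}$. The differential $D\pi_{n-k}|_{T_z}$ sends $v\in T_z$ to $(v_1,\dotsc,v_{n-k})\in\Rnk$, so
\[
 D\pi_{n-k}|_{T_z}\bigl(D\pi_{n-k}|_{T_z}\bigr)^{*}=I_{n-k}-\nu_x\nu_x^{T},
\]
whose determinant equals $1-|\nu_x|^2=|\nu_y|^2$ (e.g.\ by the matrix determinant lemma for rank-one updates). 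Taking square roots gives $J^{n-k}\pi_{n-k}(z)=|\nu^E_y(z)|$, as claimed.

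Finally, $(\brid E\cap\Omega)_x=\emptyset$ whenever $x\notin\proj(\Omega)$, so the outer integral may be restricted to $\proj(\Omega)$, producing exactly \eqref{eq:coarea}. There is no real obstacle here: once one invokes the general rectifiable coarea formula, everything reduces to the one-line linear-algebra computation of the tangential Jacobian on a hyperplane with unit normal $\nu$, which yields $|\nu_y|$.
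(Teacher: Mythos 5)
Your proof is correct and takes the same route as the paper, which states the proposition without proof as ``a special case of the coarea formula for rectifiable sets'' citing \cite{AFP}*{Theorem 2.93}; your argument simply supplies the details that citation leaves implicit, namely the computation of the tangential coarea factor of $\pi_{n-k}$ on the hyperplane $\bigl(\nu^E(z)\bigr)^{\perp}$, which correctly comes out to $\abs{\nu^E_y(z)}$. You are also right that $\nu^\Omega_y$ in the displayed formula is a typo for $\nu^E_y$.
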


Next theorem links the approximate gradient of a function of bounded variation to the generalized inner normal to its
subgraph --- see \cite{GMS}*{\S 4.1.5, Theorems 4 and 5}.
\begin{theorem}\label{thm:GMS2}
 Let $\Omega$ be an open subset of $\Rn$ and let $u\in BV(\Omega)$. Then
 \begin{equation}\label{eq:GMS2}
  \nu^{\Su^-}(x,y,t)=\left(
  \frac{\partial_1 u(x,y)}{\sqrt{1+\abs{\nabla u}^2}},\dotsc,
  \frac{\partial_{n} u(x,y)}{\sqrt{1+\abs{\nabla u}^2}},
  \frac{-1}{\sqrt{1+\abs{\nabla u}^2}}\right)
  \end{equation}
  for $\Hn\aev (x,y,t)\in\brid\Su^- \cap(\mathcal{D}_u\times\Rt)$ and
  \[\nu^{\Su^-}_{t}(x,y,t)=0\; \text{\ for\ }\Hn\aev (x,t)\in\brid\Su^-
\cap[(\Omega\setminus\mathcal{D}_u)\times\Rt].\]
In particular, if $u\in W^{1,1}(\Omega)$, then \eqref{eq:GMS2} holds for $\Hn\aev (x,t)\in\brid\Su^-
\cap(\Omega\times\Rt)$.
\end{theorem}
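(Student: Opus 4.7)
The approach is to compute the vector measure $D\chi_{\Su^-}$ componentwise and match it with the identity $D\chi_{\Su^-}=\nu^{\Su^-}\Ha^n\rest\brid\Su^-$ from \eqref{eq:DChi}. Theorem~\ref{thm:Su-} already pins down the total mass: $|D\chi_{\Su^-}|(B\times\Rt)=\int_B\sqrt{1+|\nabla u|^2}\,dz+|\Ds u|(B)$ for every Borel $B\subset\Omega$.

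The last component $D_t\chi_{\Su^-}$ is the easiest to handle. For any $\phi\in C^1_c(\Omega\times\Rt)$, Fubini in $t$ gives
\begin{equation*}
\langle D_t\chi_{\Su^-},\phi\rangle=-\int_\Omega\int_{-\infty}^{\tilde u(z)}\partial_t\phi(z,t)\,dt\,dz=-\int_\Omega\phi(z,\tilde u(z))\,dz,
\end{equation*}
valid whenever $u$ admits a precise representative $\tilde u$ at $z$, which holds for $\Hn$-a.e.\ $z\in\Omega$. Thus $D_t\chi_{\Su^-}$ is concentrated on the ``graph portion'' $G_u\subset\brid\Su^-$ sitting over $\mathcal{D}_u$, with $|D_t\chi_{\Su^-}|(B\times\Rt)=\Hn(B\cap\mathcal{D}_u)$. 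Comparing with the mass of $|\nu^{\Su^-}_t|\Ha^n\rest\brid\Su^-$ forces $\nu^{\Su^-}_t=0$ $\Hn$-a.e.\ on the complementary ``vertical'' part over $\Omega\setminus\mathcal{D}_u$, which is the second assertion. On $G_u$, the area-formula representation $\Ha^n\rest G_u=\sqrt{1+|\nabla u|^2}\,dz\rest\mathcal{D}_u$ then gives $\nu^{\Su^-}_t=-1/\sqrt{1+|\nabla u|^2}$, the sign being negative because $D_t\chi_{\Su^-}$ is a nonpositive measure.

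For the first $n$ components, the analogous one-dimensional slicing applied to $D_{z_i}\chi_{\Su^-}$ on $G_u$ yields $D_{z_i}\chi_{\Su^-}\rest G_u=-\partial_i u\cdot D_t\chi_{\Su^-}\rest G_u$, because shifting $z$ by $e_i$ translates the endpoint of the half-line section $(\Su^-)_z$ by $\partial_i u(z)$. Combined with the formula for $\nu^{\Su^-}_t$ just derived, this determines $\nu^{\Su^-}_i=\partial_i u/\sqrt{1+|\nabla u|^2}$ on $G_u$, as claimed in \eqref{eq:GMS2}; one checks a posteriori that $|\nu^{\Su^-}|=1$, consistent with the perimeter formula. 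The Sobolev refinement is immediate: $u\in W^{1,1}$ implies $\mathcal{D}_u=\Omega$ up to $\Hn$-null sets and $\Ds u=0$, so $G_u$ exhausts $\brid\Su^-\cap(\Omega\times\Rt)$.

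The main technical obstacle is justifying rigorously the endpoint-shifting identity for $u\in BV$. One has to leverage the fine structure of $BV$ functions, in particular $|\Ds u|(\mathcal{D}_u)=0$, to show that the contributions of the jump set and the Cantor part of $Du$ to $D_{z_i}\chi_{\Su^-}$ live on the lateral (vertical) part of $\brid\Su^-$ over $\Omega\setminus\mathcal{D}_u$, hence are disjoint from $G_u$. The perimeter identity from Theorem~\ref{thm:Su-} then provides the consistency check confirming that no mass is lost or doubly counted.
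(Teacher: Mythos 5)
The paper does not prove Theorem~\ref{thm:GMS2}; it is imported verbatim from \cite{GMS}*{\S 4.1.5, Theorems 4 and 5}, so there is no internal argument of the paper to compare your proposal against. Assessed on its own terms, your Fubini computation of $D_t\chi_{\Su^-}$ and the resulting support argument for the last component are sound: $D_t\chi_{\Su^-}$ is minus the pushforward of $\Hn\rest\Omega$ under $z\mapsto(z,\tilde u(z))$, and since $\Hn(\Omega\setminus\mathcal{D}_u)=0$ the restriction of $|D_t\chi_{\Su^-}|$ to $(\Omega\setminus\mathcal{D}_u)\times\Rt$ vanishes, whence \eqref{eq:DChi} forces $\nu^{\Su^-}_t=0$ there.

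The rest of your outline has genuine gaps. The assertion ``$\Ha^n\rest G_u=\sqrt{1+|\nabla u|^2}\,dz\rest\mathcal{D}_u$'' is not available for free: it presupposes (i) that $\brid\Su^-\cap(\mathcal{D}_u\times\Rt)$ coincides, up to $\Ha^n$-null sets, with the graph of $\tilde u$ over $\mathcal{D}_u$, and (ii) the area formula for the graph of a function that is merely \emph{approximately} differentiable rather than Lipschitz or $C^1$. Both (i) and (ii) are substantial theorems that are, in essence, the content of what \cite{GMS} proves, so invoking them here makes the argument circular. Likewise, the slicing identity $D_{z_i}\chi_{\Su^-}\rest G_u=-\partial_i u\,D_t\chi_{\Su^-}\rest G_u$, which you explicitly flag as ``the main technical obstacle,'' is not a loose end to be tidied later but precisely where the absolutely continuous part of $Du$ must be separated from $\Ds u$ via $|\Ds u|(\mathcal{D}_u)=0$; until it is established, the first $n$ components in \eqref{eq:GMS2} are not derived at all (and the remark that ``one checks a posteriori that $|\nu^{\Su^-}|=1$'' is not an extra consistency gain, since $|\nu^{\Su^-}|=1$ is built into the definition of $\brid\Su^-$). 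A complete argument must go either through a blowup analysis of $\Su^-$ at points of approximate differentiability --- showing that the rescaled sets $\bigl(\Su^--(z,\tilde u(z))\bigr)/r$ converge in $L^1_\loc$ to the half-space $\{s<\nabla u(z)\cdot w\}$, which simultaneously yields membership in $\brid\Su^-$ and the value of the normal --- or through the rectifiability and area-formula theory for graphs of approximately differentiable functions, which is the route of \cite{GMS}.
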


By Theorem \ref{thm:Su-}, if $\Omega$ is a bounded open set and $u\in BV(\Omega)$, the set $\Su^-$ has finite perimeter
in
$\Omega\times\Rt$. Thus, also $\Su$ has finite perimeter in $\Omega\times\Rt$; moreover 
\begin{equation}\label{eq:subgraph}\begin{split}
\brid\Su\cap(\Omega\times\Rt^+)=\brid\Su^-\cap(\Omega\times\Rt^+)\\
\nu^{\Su}\equiv\nu^{\Su^-}\text{\ on\ }\brid\Su\cap(\Omega\times\Rt^+).\quad\end{split}
\end{equation}

An important result we will use several times is Vol'pert's Theorem on
sections of sets of finite perimeter ---
see \cite{volpert} for the codimension $1$ case and \cite{barcafus}*{Theorem 2.4} for
the
general case.
\begin{theorem}\label{thm:volpert}
Let $E$ be a set of finite perimeter in $\Rn$. For $\Hnk\aev x\in\Rnk$ the following assertions hold:
\begin{enumerate} \renewcommand{\labelenumi}{\upshape(\roman{enumi})}
 \item $E_x$ has finite perimeter in $\Rk$;
 \item $\Ha^{k-1}((\brid(E_x)\triangle(\brid E)_x)=0$;
 \item For $\Ha^{k-1}\aev s$ such that $(x,s)\in \brid(E_x)$:
 \begin{enumerate}
  \item $\nu^E_y(x,s)\neq 0$;
  \item $\nu^E_y(x,s)=\nu^{E_x}(s)\abs{\nu^E_y(x,s)}$.
 \end{enumerate}\end{enumerate}
In particular, there exists a Borel set $G_E\subset\esspj(E)$ such that $\Hnk(\esspj(E)\setminus
G_E)=0$
and \upshape{(i)--(iii)} hold for every $x\in G_E$.
\end{theorem}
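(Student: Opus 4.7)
The plan is to combine the slicing theorem for BV functions with the coarea formula \eqref{eq:coarea} and the identities \eqref{eq:DChi}. The codimension-$k$ case is then obtained either directly by disintegration of vector-valued measures or, equivalently, by iterating the classical codimension-one Vol'pert theorem $k$ times, slicing one $y$-coordinate at a time.

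First, for (i), I apply the BV slicing theorem (cf.\ \cite{AFP}*{Theorems 3.103, 3.107}) to $\chi_E \in BV(\Rn)$ with the decomposition $\Rn = \Rnk\times\Rk$. This gives $\chi_E(x,\cdot) = \chi_{E_x} \in BV(\Rk)$ for $\Hnk$-a.e.\ $x$, so $E_x$ has finite perimeter in $\Rk$, and the slicing identity reads
\[
  \abs{D_y \chi_E}(\Rn) = \int_{\Rnk}\abs{D\chi_{E_x}}(\Rk)\,d\Hnk(x) = \int_{\Rnk}\Ha^{k-1}(\brid(E_x))\,d\Hnk(x).
\]

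Second, the coarea formula \eqref{eq:coarea} applied with $g\equiv 1$, together with $\abs{D_y\chi_E} = \abs{\nu^E_y}\,\Ha^{n-1}\rest\brid E$ from \eqref{eq:DChi}, yields
\[
  \abs{D_y\chi_E}(\Rn) = \int_{\brid E}\abs{\nu^E_y}\,d\Ha^{n-1} = \int_{\Rnk}\Ha^{k-1}((\brid E)_x)\,d\Hnk(x).
\]
Combining with the display above, and using the standard inclusion $\brid(E_x)\subset(\brid E)_x$ up to $\Ha^{k-1}$-null sets (a consequence of the density characterization of the reduced boundary), the equality of the two integrals forces $\Ha^{k-1}((\brid E)_x\triangle\brid(E_x))=0$ for $\Hnk$-a.e.\ $x$, which is (ii). Statement (iii)(a) follows as a byproduct: if $\nu^E_y(x,\cdot) = 0$ on an $\Ha^{k-1}$-positive subset of $\brid(E_x)$ for an $\Hnk$-positive set of $x$'s, the chain of equalities above would fail.

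Third, for (iii)(b), I disintegrate the vector-valued measure $D_y\chi_E = \nu^E_y\,\Ha^{n-1}\rest\brid E$ along the $x$-fibers. The resulting fiber measure over $\{x\}\times\Rk$ must coincide with $D\chi_{E_x} = \nu^{E_x}\,\Ha^{k-1}\rest\brid(E_x)$, and combined with (ii) and (iii)(a) this gives $\nu^E_y(x,s) = \nu^{E_x}(s)\,\abs{\nu^E_y(x,s)}$. To secure the direction-preserving part of the disintegration rigorously, the cleanest route is an induction on $k$: apply the codimension-one Vol'pert theorem (cf.\ \cite{volpert}), peeling off one $y$-coordinate at a time; at each step the one-dimensional version both identifies the magnitude of the corresponding normal component with that of the sliced set and collapses its direction onto $\nu^{E_x}$. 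The Borel set $G_E\subset\esspj(E)$ is then obtained by intersecting the full-measure sets where (i)--(iii) hold and passing to a Borel hull, using standard measurable regularity.

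The main obstacle is the direction-preservation assertion in (iii)(b): equality of scalar magnitudes is straightforward from the coarea formula, but the identification of $\nu^{E_x}$ with the normalized vector $\nu^E_y/\abs{\nu^E_y}$ requires a genuine disintegration of a vector-valued BV derivative, which is why the iterative reduction to the codimension-one case (where the statement is classical) is the most transparent way to proceed.
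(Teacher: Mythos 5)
The paper does not prove Theorem \ref{thm:volpert}; it states the result and cites \cite{volpert} for $k=1$ and \cite{barcafus}*{Theorem 2.4} for general $k$, so there is no internal proof to compare against. Judged on its own merits, your sketch has a genuine gap at its core: the ``standard inclusion $\brid(E_x)\subset(\brid E)_x$ up to $\Ha^{k-1}$-null sets'' is not a standard consequence of the density characterization of the reduced boundary. Membership in $\brid(E_x)$ is a blowup condition in $\Rk$, whereas membership in $\brid E$ is a blowup condition in the full $\Rn$, and no direct comparison holds pointwise (a point of $\brid E$ with $\nu^E_y=0$, for instance, slices to nothing in the $y$-direction). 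The assertion that for $\Hnk$-a.e.\ $x$ the sets $\brid(E_x)$ and $(\brid E)_x$ agree up to $\Ha^{k-1}$-null sets is precisely part (ii); your argument derives (ii) from it, which is circular. Likewise, the vector slicing identity $\abs{D_y\chi_E}(\Rn)=\int_{\Rnk}\abs{D\chi_{E_x}}(\Rk)\,d\Hnk(x)$ is not an immediate corollary of \cite{AFP}*{Theorems 3.103, 3.107}: those give the disintegration one coordinate direction at a time, and promoting them to the total variation of the $\Rk$-valued measure $D_y\chi_E$, with equality rather than the $\leq$ inequality that Fatou gives, requires a measurable-selection argument (choosing near-optimal competitor vector fields $\phi(x,\cdot)$ fibrewise and gluing).

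Your secondary route --- inducting on $k$ by peeling off one $y$-coordinate at a time and applying the classical codimension-one Vol'pert theorem at each step --- is the right strategy and is in the spirit of the cited proof in \cite{barcafus}*{Theorem 2.4}. But even there the induction is not automatic: slicing in $y_k$ produces statements about the one-dimensional sections $E_{x,y_1,\dotsc,y_{k-1}}$, while the theorem concerns the $k$-dimensional sections $E_x$, and converting between the two, propagating the identification of reduced boundaries through the recursion, and controlling the exceptional null sets accumulated at each step is where the real work lies. As written, your sketch correctly names the obstacle in (iii)(b) but does not carry out the reduction; together with the circularity in the first part, the proposal is a plausible outline rather than a proof.
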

In view of the previous theorem, we will use the same notation $\brid E_x$ to denote $(\brid E)_x$ and $\brid (E_x)$
when they coincide up to $\Ha^{k-1}$ negligible sets. 

Next result, proved in \cite{barcafus}*{Lemma 3.1}, deals with some properties of the function $L$ and its derivatives.
Recall from Section \ref{sec:states} that $L(x):=\Ha^k(E_x)$.
\begin{lemma}
Let $E$ be any set of finite perimeter in $\Rn$. Then, either $L(x)=+\infty$ for $\Hnk\aev
x\in\Rnk$ or $L(x)<+\infty$ for $\Hnk\aev x\in\Rnk$ and $\Hn(E)<+\infty$. Moreover, in the latter case, $L\in BV(\Rnk)$
and for any Borel set $B\subset\Rnk$
\begin{multline}\label{eq:formula_deriv}
 DL(B)=\int_{\brid E\cap (B\times\Rk)\cap\{\nu^E_y=0\}} \nu^E_x(x,y)\,d\Ha^{n-1}(x,y)\\
  + \int_B dx\int_{\brid E_x\cap \{\nu_y^E\neq 0\}}
\frac{\nu^E_x(x,y)}{\abs{\nu^E_y(x,y)}}\,d\Ha^{k-1}(y)\,,
\end{multline}
$DL\rest G_{\Es}=\nabla L\,\Hnk$ and for $\Hnk\aev x\in G_{\Es}$
\begin{equation}\label{eq:lemma_partialLEs}
 \nabla L(x)=\Ha^{k-1}(\brid \Es_x)\frac{\nu^{\Es}_x(x)}{\abs{\nu^{\Es}_y(x)}}\,,
\end{equation}
where we dropped the variable $y$ for functions that are constant in $\brid \Es_x$.
\end{lemma}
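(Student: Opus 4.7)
The plan is to establish three things in order---the dichotomy for $L$, the BV property of $L$ together with the global formula \eqref{eq:formula_deriv}, and the absolute continuity of $DL\rest G_{\Es}$ together with \eqref{eq:lemma_partialLEs}---and the main obstacle will lie in the last piece. For the dichotomy, I would apply the global isoperimetric inequality to the set of finite perimeter $E$: it forces $\min(\Hn(E),\Hn(\Rn\setminus E))<+\infty$, and since at most one of these two quantities can be finite, exactly one of them is. In the first case Fubini gives $L<+\infty$ $\Hnk$-a.e.; in the second, $\Hk(\Rk\setminus E_x)<+\infty$ a.e., which forces $L=+\infty$ a.e.

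For the second part I would assume $\Hn(E)<+\infty$, so that $L\in L^1(\Rnk)$, and test $L$ against $\partial_i\phi$ for $\phi\in C^1_c(\Rnk)$ and $i=1,\dotsc,n-k$. Fubini combined with the divergence formula \eqref{eq:DChi} yields
\[
\int_{\Rnk}L\,\partial_i\phi\,dx=\int_{\Rn}\chi_E\,\partial_i\phi\,dz=-\int_{\brid E}\phi(x)\,\nu^E_i\,d\Ha^{n-1},
\]
which shows $L\in BV(\Rnk)$, bounds $\abs{DL}(\Rnk)$ by $P(E)$, and identifies $DL$ as the pushforward of $\nu^E\,\Ha^{n-1}\rest\brid E$ under $\proj$. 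Splitting this pushforward along $\{\nu^E_y=0\}\cup\{\nu^E_y\neq 0\}$ and applying the coarea formula \eqref{eq:coarea} with $g=\nu^E_x/\abs{\nu^E_y}$ on the second piece---together with Vol'pert's Theorem \ref{thm:volpert} identifying $(\brid E)_x$ with $\brid E_x$ up to $\Ha^{k-1}$-null sets---reproduces the two summands of \eqref{eq:formula_deriv}.

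For the third part I would apply \eqref{eq:formula_deriv} with $\Es$ in place of $E$; this is legitimate because $\Hk(\Es_x)=L(x)$, so the same function $L$ governs both sets. The second summand is, by construction, absolutely continuous with respect to $\Hnk$, with density
\[
x\mapsto\int_{\brid\Es_x\cap\{\nu^{\Es}_y\neq 0\}}\frac{\nu^{\Es}_x(x,y)}{\abs{\nu^{\Es}_y(x,y)}}\,d\Ha^{k-1}(y).
\]
The $y$-rotational invariance of $\Es$ forces both $\nu^{\Es}_x$ and $\abs{\nu^{\Es}_y}$ to be constant along each slice $\brid\Es_x$, so the integral collapses to $\Ha^{k-1}(\brid\Es_x)\,\nu^{\Es}_x/\abs{\nu^{\Es}_y}$, which is \eqref{eq:lemma_partialLEs}.

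The main obstacle is to show that the first summand of \eqref{eq:formula_deriv} for $\Es$ carries no mass on $G_{\Es}$. For this I would argue geometrically: by the axial symmetry of $\Es$, the set $\brid\Es\cap\{\nu^{\Es}_y=0\}$ is contained in the ``vertical walls'' $\{x_0\}\times\Rk$ over the jump set $J_L$ of the BV function $L$. At any $x_0\in J_L$ the slice $(\brid\Es)_{x_0}$ contains the annular region with $L_-(x_0)/\omega_k\leq\abs{y}^k\leq L_+(x_0)/\omega_k$, which has positive $k$-dimensional measure and therefore infinite $\Ha^{k-1}$-measure, whereas $\brid(\Es_{x_0})$ is a single sphere of finite $\Ha^{k-1}$-measure; Vol'pert (ii) therefore fails at $x_0$, so $J_L\cap G_{\Es}=\emptyset$. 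Consequently the first summand---whose support in $\Rnk$ projects into $J_L$---assigns no mass to $G_{\Es}$, and the identity $DL\rest G_{\Es}=\nabla L\,\Hnk$ with $\nabla L$ given by \eqref{eq:lemma_partialLEs} follows.
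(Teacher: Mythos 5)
The paper does not prove this statement but cites it from \cite{barcafus}*{Lemma 3.1}, so I can only evaluate your argument on its own terms. Your handling of the dichotomy (via the global isoperimetric inequality and Fubini) and of the BV property together with \eqref{eq:formula_deriv} (testing $L$ against $\partial_i\phi$, identifying $DL$ as the pushforward of $\nu^E_x\,\Ha^{n-1}\rest\brid E$ under $\proj$, then splitting along $\{\nu^E_y=0\}$ and applying the coarea formula and Vol'pert's theorem) is correct and clean. The collapse of the density integral via $y$-rotational invariance, giving \eqref{eq:lemma_partialLEs}, is also fine once one knows the density equals $\nabla L$.

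The gap is in the claim that $\brid\Es\cap\{\nu^{\Es}_y=0\}$ is contained in $J_L\times\Rk$. That is false in general. Write $r:=(L/\omega_k)^{1/k}$, so that $\Es=\{\abs{y}\leq r(x)\}$. By Theorem \ref{thm:GMS2} (applied to the subgraph of $r$ and transported to $\Es$ via the spherical coordinates $(x,s,\omega)\mapsto(x,s\omega)$), the set where $\nu^{\Es}_y=0$ on $\brid\Es$ sits above the set where $r$ fails to be approximately differentiable, which is strictly larger than $J_L$ whenever $D^sL$ has a Cantor part. A concrete counterexample: take $k=1$, $n=2$ and $L=2c$ with $c$ the Cantor--Vitali function. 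Then $L$ is continuous, so $J_L=\emptyset$, yet $\brid\Es\cap\{\nu^{\Es}_y=0\}$ has $\Ha^{1}$-measure $2\abs{D^sL}((0,1))=4>0$, living over the Cantor set. Your annulus argument (showing Vol'pert's (ii) fails at jump points because $(\brid\Es)_{x_0}$ picks up a $k$-dimensional slab of infinite $\Ha^{k-1}$-measure) is correct, but it is genuinely specific to jumps and says nothing about this Cantor contribution, which is exactly the part you still need to exclude from $G_{\Es}$.

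What is missing is therefore an argument that $\abs{D^sL}(G_{\Es})=0$ even in the presence of a Cantor part. One route is the one you are implicitly circling: transfer to the profile subgraph $S^-_r$ via the map $T(x,s,\omega)=(x,s\omega)$, note that $\nu^{\Es}_y=0$ at $(x,s\omega)$ iff $\nu^{S^-_r}_s=0$ at $(x,s)$, invoke Theorem \ref{thm:GMS2} to locate $\{\nu^{S^-_r}_s=0\}$ over $\Rnk\setminus\mathcal{D}_r$, and use the rectifiable product structure $\brid\Es\setminus\{y=0\}\cong(\brid S^-_r\cap\{s>0\})\times S^{k-1}$ to conclude that the first summand of \eqref{eq:formula_deriv} for $\Es$ is concentrated over $\Rnk\setminus\mathcal{D}_L$. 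One then has to arrange (or verify from the construction in \cite{barcafus}) that the Vol'pert set $G_{\Es}$ is contained in $\mathcal{D}_L$ up to a $\abs{D^sL}$-null set; since every full-measure Borel subset of $G_{\Es}$ still satisfies Vol'pert's conclusions, this is a matter of choice, but it must be made explicit rather than deduced from the jump-set picture alone.
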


\section{The Sobolev case}\label{sec:sobolev}
In this section we prove the Pólya-Szegő inequality for the Steiner rearrangement in codimension $k$ of
Sobolev functions and Theorem \ref{thm:equality} concerning the equality cases. 

We first observe that
the Steiner rearrangement of a function in $\sobo(\Omega)$ belongs to $\sobo(\Omegas)$.
\begin{proposition}\label{thm:us_is_sobolev}
 Let $\Omega\subset\Rn$ be an open set and let $u\in\sobo(\Omega)$ be a non-negative function.
Then $\us\in\sobo(\Omegas)$.
\end{proposition}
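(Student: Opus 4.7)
The plan is to verify, for every open $\omega\Subset\proj(\Omegas)$, that the zero-extension $(\us)_0$ belongs to $W^{1,1}(\omega\times\Ry)$, which is exactly what membership in $\sobo(\Omegas)$ requires. Since $\proj(\Omegas)=\esspj(\Omega)\subset\proj(\Omega)$, any such $\omega$ also satisfies $\omega\Subset\proj(\Omega)$, so one can pick $\omega\Subset\omega'\Subset\proj(\Omega)$ with $u_0\in W^{1,1}(\omega'\times\Ry)$ by hypothesis. In view of \eqref{eq:schwarz}, the whole problem reduces to controlling the regularity of the slicewise Schwarz rearrangement.

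For the $y$-derivatives I would argue slicewise: by the ACL characterization of Sobolev functions, for $\Hnk$-a.e.~$x\in\omega$ the slice $u_0(x,\cdot)$ lies in $W^{1,1}(\Rk)$, and the classical Pólya–Szegő inequality for the Schwarz rearrangement on $\Rk$ gives $(u_0(x,\cdot))^*\in W^{1,1}(\Rk)$ with $\int_{\Rk}|\nabla_y(\us)_0(x,y)|\,dy\leq\int_{\Rk}|\nabla_y u_0(x,y)|\,dy$. Integrating over $\omega$ yields $\nabla_y(\us)_0\in L^1(\omega\times\Ry)$.

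For the $x$-derivatives, the $L^1$-non-expansiveness \eqref{lemma:s_continuous} applied fiberwise with $p=1$ gives, for every $h\in\Rnk$ with $|h|$ small,
\[
\|(\us)_0(\cdot+h,\cdot)-(\us)_0(\cdot,\cdot)\|_{L^1(\omega\times\Ry)}
\leq\|u_0(\cdot+h,\cdot)-u_0(\cdot,\cdot)\|_{L^1(\omega'\times\Ry)}
\leq|h|\,\|\nabla_x u_0\|_{L^1(\omega'\times\Ry)}\,,
\]
so $(\us)_0$ has bounded $L^1$-difference quotients in each $x_i$-direction. The main obstacle is that these bounds a priori yield only $BV$-regularity in $x$, not the $W^{1,1}$-regularity we need. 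To rule out a singular part of the $x$-derivative I would follow the polarization approach of \cite{brock-s}: approximate $\us$ by a sequence $u_N$ of iterated polarizations with respect to hyperplanes in $\Ry$. Each polarization merely exchanges pairs of values in each fiber, so $|\nabla u_N|$ is equi-measurable with $|\nabla u_0|$, and the family $\{|\nabla u_N|\}$ is therefore equi-integrable on $\omega\times\Ry$. Since $u_N\to\us$ in $L^1$ by the Brock–Solynin convergence, Dunford–Pettis extracts a subsequence with $\nabla u_N\weakto g$ weakly in $L^1(\omega\times\Ry;\Rn)$; the weak limit $g$ must coincide with the distributional gradient of $\us$, hence $\nabla(\us)_0\in L^1(\omega\times\Ry)$ and the proposition follows.
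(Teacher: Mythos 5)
Your proposal is correct, but it takes a genuinely different — and considerably longer — route than the paper. The paper's proof is essentially two lines: pick a cut-off $\phi\in C^1_c(\proj(\Omega))$ with $\phi\equiv 1$ on $\omega$, so that $v:=\phi u$ extends to a function in $W^{1,1}(\Rn)$; invoke \cite{brock-s}*{Theorem 8.2}, which states that the codimension-$k$ Steiner rearrangement maps non-negative $W^{1,1}$ functions to $W^{1,1}$ functions; and observe that $v^\sigma=\us$ on $\omega\times\Ry$ because the rearrangement is fiberwise and $v(x,\cdot)=u(x,\cdot)$ for $x\in\omega$. You instead reconstruct the content of that cited theorem: fiberwise Pólya--Szegő for the $y$-derivatives, $L^1$-nonexpansiveness plus difference quotients for a $BV$ bound in $x$, and then the Brock--Solynin polarization approximation together with equi-measurability of $|\nabla u_N|$ and Dunford--Pettis weak $L^1$ compactness to promote the $BV$ bound to $W^{1,1}$. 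Your last step is in fact the crux: the difference-quotient argument alone genuinely stops at $BV$, and you correctly recognize that, and the equi-integrability coming from the fact that each polarization permutes gradient magnitudes is the right mechanism to rule out a singular part. Both arguments are sound; the paper's is far more economical precisely because it outsources this entire machinery to a single citation (which, incidentally, already contains the polarization approximation you re-derive), while yours is self-contained at the cost of re-proving a known theorem inline.
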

\begin{proof}
By \cite{brock-s}*{Theorem 8.2} we know that if $v\in W^{1,1}(\Omega)$ is a non-negative function, then $v^\sigma$
belongs to $W^{1,1}(\Omegas)$. Given a non-negative function $u\in\sobo(\Omega)$ and 
fixed $\omega\Subset\proj(\Omega)$ we can find a cut-off function $\phi\in C^1_c(\proj(\Omega))$ such that
$\phi\equiv 1$ in $\omega$. Hence, the function $v:=\phi u$ belongs to $W^{1,1}(\Omega)$. Then, $v^\sigma\in
W^{1,1}(\Omegas)$. Besides, $v^\sigma(x,y)=\us(x,y)$ for all $x\in\omega$ and $y\in\Rk$. This proves the assertion.
\end{proof}

Next lemma gives
formulae for the approximate derivatives of the distribution function of a Sobolev function.
\begin{lemma}\label{lemma:derivate}
 Let $\Omega\subset\R^n$ be an open and bounded set, $u:\Omega\to\R$  be a non-negative function,
 $u\in \sobo(\Omega)$ satisfying \eqref{eq:condizioni_u}. Then, $\lambda_u\in W^{1,1}(\omega\times\Rt^+)$ for every open
set
$\omega\Subset\proj(\Omega)$
and for $\Hnk\aev x\in\esspj(\Su)$,
\begin{equation}\label{eq:tesi_deriv1}
 \partial_t\lambda_u(x,t)=-\int_{\brid\{y:u(x,y)>t\}}\frac{1}{\abs{\nabla_y u}}\;d\Ha^{k-1}(y),
\end{equation}
\begin{equation}\label{eq:tesi_deriv2}
 \partial_i\lambda_u(x,t)=\int_{\brid\{y:u(x,y)>t\}}\frac{\partial_i u}{\abs{\nabla_y u}}\;d\Ha^{k-1}(y)\,
,\,i=1,\dotsc,n-k,
\end{equation}
for $\Ha^1\aev t\in(0,M(x))$.
\end{lemma}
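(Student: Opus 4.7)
The plan is to recognise $\lambda_u(x,t)=\Hk((\Su)_{(x,t)})$ as the section-measure function of the subgraph $\Su\subset\R^{n+1}$, regarded as a set of finite perimeter with base $\Rnk\times\Rt$ and fibres in $\Ry$. The section-measure lemma from Section~\ref{sec:background} applied with $E=\Su$ then reduces the problem to a computation with $\nu^{\Su}$, which Theorem~\ref{thm:GMS2} expresses in terms of $\nabla u$.

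First I would check the geometric setup: since $\Omega$ is bounded and $u_0\in W^{1,1}(\omega\times\Ry)$ for every $\omega\Subset\proj(\Omega)$, Theorem~\ref{thm:Su-} gives that $\Su$ has locally finite perimeter in $\omega\times\Ry\times\Rt^+$. Vol'pert's theorem (Theorem~\ref{thm:volpert}) applied to $\Su$ with the splitting $\R^{n+1}=(\Rnk\times\Rt)\times\Ry$ then says that for $\Ha^{n-k+1}\aev(x,t)$ the section $(\Su)_{(x,t)}=\{y:u(x,y)>t\}$ has finite perimeter in $\Ry$, its reduced boundary agrees (modulo $\Ha^{k-1}$-null sets) with the $(x,t)$-slice of $\brid\Su$, and $\nu^{\Su}_y\neq0$ there.

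Next, formula \eqref{eq:formula_deriv} with $E=\Su$ decomposes $D\lambda_u$ into a singular integral over $\brid\Su\cap\{\nu^{\Su}_y=0\}$ and an absolutely continuous part whose density is given by \eqref{eq:lemma_partialLEs}. To kill the singular part on $\omega\times\Rt^+$, the Sobolev version of Theorem~\ref{thm:GMS2} gives $\nu^{\Su}=(\nabla u,-1)/\sqrt{1+\abs{\nabla u}^2}$ for $\Ha^n\aev(x,y,t)\in\brid\Su\cap(\Omega\times\Rt^+)$, so $\{\nu^{\Su}_y=0\}$ matches $\{\nabla_y u=0\}$ on the graph of $u$ modulo $\Ha^n$-null sets; the area formula for this graph transfers the $\Hn$-negligibility of $\{(x,y)\in\Omega:\nabla_y u=0,\,u(x,y)<M(x)\}$ from \eqref{eq:condizioni_u} to an $\Ha^n$-negligibility on $\brid\Su$, and the residual peak set $\{u=M(x)\}$ projects under $(x,y,t)\mapsto(x,t)$ into the graph of $M$, which carries no contribution on $\{(x,t):t<M(x)\}$. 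This yields $\lambda_u\in W^{1,1}(\omega\times\Rt^+)$.

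For the pointwise formulas I would read off $\nabla\lambda_u$ from \eqref{eq:lemma_partialLEs}, getting
\[
\nabla\lambda_u(x,t)=\int_{\brid(\Su)_{(x,t)}}\frac{\nu^{\Su}_{(x,t)}(x,y,t)}{\abs{\nu^{\Su}_y(x,y,t)}}\,d\Ha^{k-1}(y),
\]
substitute the Sobolev formula for $\nu^{\Su}$ from Theorem~\ref{thm:GMS2} so that the integrand becomes $(\nabla_x u,-1)/\abs{\nabla_y u}$, and use Vol'pert to identify $\brid(\Su)_{(x,t)}$ with $\brid\{y:u(x,y)>t\}$. This delivers \eqref{eq:tesi_deriv1}--\eqref{eq:tesi_deriv2}. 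I expect the main obstacle to be the vanishing of the singular part of $D\lambda_u$: it requires simultaneously identifying $\{\nu^{\Su}_y=0\}$ with $\{\nabla_y u=0\}$ on $\brid\Su$ via the Sobolev refinement of Theorem~\ref{thm:GMS2}, and converting the $\Omega$-level hypothesis \eqref{eq:condizioni_u} into vanishing of a base measure through the area formula combined with a projection estimate onto the graph of~$M$.
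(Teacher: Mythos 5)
Your route is genuinely different from the paper's. The paper derives the distributional derivatives of $\lambda_u$ from scratch: it integrates by parts against test functions $\phi\in C^1_c(\omega\times\Rt^+)$, uses the coarea formula in the $t$--direction (with Jacobian $\abs{\nu_t^{\Su}}$) to rewrite the result as an integral over $\brid\Su$, passes from continuous to bounded Borel integrands via Lusin's theorem, and only then applies the $y$--coarea formula \eqref{eq:coarea} together with \eqref{eq:condizioni_u} to identify the measure with an absolutely continuous density; the $t$--derivative is obtained at the end by the same scheme together with \eqref{eq:derivata_sezione} and \eqref{eq:formula_deriv}. You instead invoke the decomposition \eqref{eq:formula_deriv} from \cite{barcafus} directly with $E=\Su$, base $\Rnk\times\Rt$, fibre $\Ry$, which is a legitimate and more modular alternative if it can be closed.

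The step that does not close as written is the vanishing of the ``vertical'' term. The first integral in \eqref{eq:formula_deriv} (over $\brid\Su\cap\{\nu_y^{\Su}=0\}$) is precisely the possible singular part of $D\lambda_u$, and you need it to vanish to get $\lambda_u\in W^{1,1}(\omega\times\Rt^+)$. Hypothesis \eqref{eq:condizioni_u}, via the area formula on the graph of $u$ and Theorem \ref{thm:GMS2}, does make the sub--maximal part of $\brid\Su\cap\{\nu_y^{\Su}=0\}$ an $\Hn$--null set, as you argue. But the residual peak set $\{\nabla_y u=0,\,u(x,y)=M(x)>0\}$ is \emph{explicitly allowed to have positive $\Hn$--measure} under \eqref{eq:condizioni_u}, and observing that its image under $(x,y,t)\mapsto(x,t)$ lies on the graph of $M$ — an $\Ha^{n-k+1}$--null set — shows only that the corresponding measure is \emph{singular}, not that it is zero. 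If the peak set has positive measure then $\lambda_u(x,\cdot)$ jumps at $t=M(x)$ and $D_t\lambda_u$ really does carry a nonzero singular part on $\omega\times\Rt^+$, so "carries no contribution on $\{(x,t):t<M(x)\}$" yields the pointwise formulae \eqref{eq:tesi_deriv1}--\eqref{eq:tesi_deriv2} on $\{0<t<M(x)\}$ but not the global $W^{1,1}$ membership; a separate argument is needed there. One smaller slip: \eqref{eq:lemma_partialLEs} is stated only for the symmetrized set $\Es$, where $\nu^{\Es}$ is constant on each spherical section; for $E=\Su$ the absolutely continuous density must be read off the second integral of \eqref{eq:formula_deriv} together with Vol'pert's identification of $(\brid\Su)_{x,t}$ with $\brid\{y:u(x,y)>t\}$, rather than from \eqref{eq:lemma_partialLEs}.
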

\begin{proof}
 Let $r>0$ be large enough to have $\Omega\subset\Rnk\times B(0,r)$ and let $\omega\Subset\proj(\Omega)$ For the sake of
simplicity we shall identify the extension $u_0$ with $u$. Hence, we may
assume that $u\in W^{1,1}(\omega\times\Ry)$ and $u(x,y)=0$ if $\abs{y}>r$.

If $\phi\in C^1_c(\Omega\times\Rt^+)$, by Fubini's Theorem we get, for $i=1\dotsc,n-k$,
\begin{equation}\label{eq:deriv_1}\begin{split}
&\int_{\omega\times\Rt^+}\partial_i \phi(x,t)\lambda_u(x,t)\,dx\, dt 
 =\int_{\omega\times\Ry\times\Rt^+} \partial_i\phi(x,t)\chi_{\Su}(x,y,t)\,dx\,dy\,dt\\
 &\quad=\int_{\omega\times\Ry} dx\,dy\int_0^{u(x,y)} \partial_i \phi(x,t)\,dt
 \\
 &\quad=\int_{\omega\times B(0,r)}\partial_i\left[ \int_0^{u(x,y)} \phi(x,t)\,dt\right]dx\,dy
  -\int_{\omega\times B(0,r)} \phi(x,u(x,y))\partial_i u(x,y)\,dx\,dy
 \end{split}\end{equation}
The first integral in the last expression vanishes over $\omega\times B(0,r)$. Applying the coarea formula
\eqref{eq:coarea} and recalling that by Theorem \ref{thm:volpert} 
\[(\brid \Su)_{x,y}\cap\Rt^+=\brid(\Su)_{x,y}\cap\Rt^+=\brid(0,u(x,y))\cap\Rt^+\] for $\Hn\aev (x,y)\in\omega\times
B(0,r)$, we get
\begin{equation}\label{eq:deriv_2}
\begin{split}
& \int_{\brid\Su\cap(\omega\times B(0,r)\times\Rt^+)} \phi(x,t)\partial_i u(x,y)\abs{\nu_t^{\Su}(x,y,t)}\,d\Hn \\
&\quad=  \int_{\omega\times B(0,r)}dx\,dy\int_{(\brid\Su)_{x,y}\cap\Rt^+} \phi(x,t)\partial_i u(x,y)\,d\Ha^0(t)\\
&\quad= \int_{\omega\times B(0,r)} \phi(x,u(x,y))\partial_i u(x,y)\,dx\,dy\,.
\end{split}
\end{equation}
Moreover, from \eqref{eq:GMS2} and \eqref{eq:subgraph}, we have
\begin{equation}\label{eq:deriv_3}
 \nu^{\Su}(x,y,t)=\left(\frac{\nabla_x u(x,y)}{\sqrt{1+\abs{\nabla u}^2}},
 \frac{\nabla_y u(x,y)}{\sqrt{1+\abs{\nabla u}^2}}, \frac{-1}{\sqrt{1+\abs{\nabla u}^2}}
 \right)
\end{equation}
for $\Hn\aev (x,y,t)\in\brid\Su\cap (\omega\times B(0,r)\times\Rt^+)$.

Combining \eqref{eq:deriv_1}$-$\eqref{eq:deriv_3}, we have
\begin{equation}\begin{split}\label{eq:deriv_4}
   \int_{\omega\times\Rt^+}\partial_i \phi(x,t)\lambda_u(x,t)\,dx\, dt
 &=-\int_{\brid\Su\cap(\omega\times B(0,r)\times\Rt^+)} \phi(x,t)\partial_i u(x,y)\abs{\nu_t^{\Su}(x,y,t)}\,d\Hn\\
 &=-\int_{\brid\Su\cap(\omega\times B(0,r)\times\Rt^+)} \phi(x,t)\partial_i u(x,y)\cdot \frac{1}{\sqrt{1+\abs{\nabla
u}^2}} d\Hn.
\end{split}\end{equation}
The last equation implies that the distributional derivative $D_i \lambda_u$ is a finite Radon measure on
$\omega\times\Rt^+$. A similar argument shows that the same holds for  $D_t \lambda_u$. Therefore, since
\[\int_{\omega\times\Rt^+}\lambda_u (x,t)dx\,dt=\int_{\omega\times\Ry}u(x,y)\,dx\,dy<+\infty\,,\] we get 
$\lambda_u\in L^1(\omega\times\Rt^+)$ and thus $\lambda_u\in BV(\omega\times\Rt^+)$. 

Notice that \eqref{eq:deriv_4} implies that for every $\phi\in C^1_c(\omega\times\Rt^+)$ we have
\begin{equation}\label{eq:deriv_5}
 \int_{\omega\times\Rt^+} \phi(x,t)\,dD_i\lambda_u =\int_{\brid\Su\cap(\omega\times B(0,r)\times\Rt^+)}
 \phi(x,t)\cdot \frac{\partial_i u(x,y)}{\sqrt{1+\abs{\nabla u}^2}}d\Hn\,.
\end{equation}
By density, the same equality holds for $\phi\in C(\omega\times\Rt^+)$.

We claim that \eqref{eq:deriv_5} holds also for every bounded Borel function in $\omega\times\Rt^+$.
In fact, for any Borel set $B\subset\omega\times\Rt^+$, define the Borel measure $\mu$ by setting
\[\mu(B):=\abs{D_i\lambda_u}(B)+\Hn\left(\brid\Su\cap (B\times\Ry)\right)\]
and let $\phi$ be any bounded Borel function in $\omega\times\Rt^+$. By Lusin's Theorem, for any $\epsilon>0$ there
exists a function $\phi_\epsilon\in C(\omega\times\Rt^+)$ such that ${\norm{\phi_\epsilon}}_\infty \leq\
{\norm{\phi}}_\infty$ and $\mu\{(x,t) :\phi_\epsilon(x,t)\neq\phi(x,t)\}<\epsilon$. 
Since $\phi_\epsilon$ is continuous, equality \eqref{eq:deriv_5} holds for
$\phi_\epsilon$, 
and hence the absolute value
of the difference of the left-hand
side and the right-hand side is not greater than $4\epsilon{\norm{\phi}}_\infty$. From
the arbitrariness  of $\epsilon$, the claim follows.

Let $g\in C_c(\omega\times\Rt^+)$. From \eqref{eq:deriv_5}, \eqref{eq:deriv_3} and using condition
\eqref{eq:condizioni_u} with the coarea formula \eqref{eq:coarea}, we get
\begin{equation*}\begin{split}
\int_{\omega\times\Rt^+} g(x,t)\,d D_i\lambda_u
&=\int_{\brid\Su\cap(\omega\times\Ry\times\Rt^+)} g(x,t)\partial_i
u(x,y)\cdot\frac{1}{\sqrt{1+\abs{\nabla u}^2}} d\Hn\\
&=\int_{\brid\Su\cap(\omega\times\Ry\times\Rt^+)} g(x,t) \frac{\partial_i u(x,y)}{\abs{\nabla_y
u(x,y)}}
\abs{\nu_y^{\Su}(x,y,t)}\,d\Hn\\
&=\int_{\omega\times\Rt^+}g(x,t)\, dx\,dt
\int_{(\brid\Su)_{x,t}}\frac{\partial_i u(x,y)}{\abs{\nabla_y u(x,y)}} d\Ha^{k-1}(y)\,.
\end{split}\end{equation*}
Since $g$ is arbitrary, we have that the measure $D_i\lambda_u$ is absolutely continuous with respect to
$\Ha^{n-k+1}$
and is equal to
\[\left(\int_{(\brid\Su)_{x,t}}\frac{\partial_i u(x,y)}{\abs{\nabla_y u(x,y)}}
d\Ha^{k-1}(y)\right)\Ha^{n-k+1}\,,\]
thus proving that $\lambda_u\in W^{1,1}(\omega\times\Rt^+)$.
Because of \upshape{(ii)} in Theorem \ref{thm:volpert}, equation \eqref{eq:tesi_deriv2} holds for
$\Ha^{n-k+1}\aev (x,t)\in\esspjt(\Su)\cap(\omega\times\Rt^+).$

Since 
\begin{equation}\label{eq:equivalenti}
\esspjt(\Su)\text{\ is\ equivalent\ to\ }\bigcup_{x\in\esspj(\Su)} \{x\}\times(0,M(x))\,,
\end{equation}
we see that for
$\Hnk\aev x\in\esspj(\Su)$ equation \eqref{eq:tesi_deriv2} holds for $\Ha^1\aev t\in (0,M(x))$.

It remains to prove \eqref{eq:tesi_deriv1}: this follows from the same calculations and applying
\eqref{eq:derivata_sezione} and \eqref{eq:formula_deriv}.
\end{proof}

\begin{remark}
 If $\Omega$ and $u$ are as in Lemma \ref{lemma:derivate}, then, by Proposition \ref{thm:us_is_sobolev} 
$\us\in\sobo(\Omega)$, by Remark \ref{rk:us} $\us$ satisfies condition \eqref{eq:condizioni_u} and we get
that for $\Hnk\aev x\in\esspj(\Su)$
\begin{gather}
 \label{eq:nabla_t_u}\partial_t\lambda_u(x,t)=-\frac{\Ha^{k-1}(\brid\{y:\us(x,y)>t\})}{\abs{\nabla_y \us}}|_
 {\brid\{y:\us(x,y)>t\}}\\
 \label{eq:nabla_i_u}\partial_i\lambda_u(x,t)=\Ha^{k-1}(\brid\{y:\us(x,y)>t\}) \frac{\partial_i \us}{\abs{\nabla_y
\us}}|_
 {\brid\{y:\us(x,y)>t\}}
\end{gather}
\end{remark}

The following approximation result will be useful in the proof of Theorem \ref{thm:disuguaglianza}.
\begin{lemma}\label{lemma:approx}
 Let $\omega\subset\Rnk$ be an open set and let $u\in W^{1,p}(\omega\times\Ry)$, $p\geq 1$, be a non-negative function.
Then for
every $\omega'\Subset\omega$ and
for every $\epsilon>0$ there exists a non-negative Lipschitz function $w:\Rn\to\R$ with compact support such that
\begin{gather}
 \label{eq:lemma_approx_1}
 \Hn\left(\{z\in\Rn:w(z)>0,\,\nabla_y w(z)=0\}\right)=0\text{\ and\ }\\
 \label{eq:lemma_approx_2}
 {\norm{u-w}}_{W^{1,p}(\omega'\times\Ry)} < \epsilon\,.
\end{gather}
\end{lemma}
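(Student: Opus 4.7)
The plan is a two-step procedure: first approximate $u$ by a smooth compactly supported non-negative function $v$, then add a small Lipschitz perturbation $\eta g$ designed to kill the set where $\nabla_y$ vanishes on the support.

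For the smooth approximation, I would pick $\omega''$ with $\omega' \Subset \omega'' \Subset \omega$, a cutoff $\chi \in C^\infty_c(\omega)$ with $\chi \equiv 1$ on $\overline{\omega''}$, and a radial cutoff $\eta_R \in C^\infty_c(\Rk)$ equal to $1$ on $B(0,R)$. Setting $\tilde u := \chi(x)\,\eta_R(y)\,u$ and extending by $0$ outside its support gives a non-negative $W^{1,p}(\Rn)$ function with compact support; for $R$ large, $\norm{u-\tilde u}_{W^{1,p}(\omega'\times\Ry)}$ is arbitrarily small by dominated convergence, and mollifying $\tilde u$ then produces a non-negative $v \in C^\infty_c(\Rn)$ with $\norm{u-v}_{W^{1,p}(\omega'\times\Ry)} < \epsilon/2$.

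For the perturbation, choose $R' > \sup\{|y|:(x,y)\in\supp(v)\}$ and a non-negative $\zeta \in C^\infty_c(\Rnk)$ with $\zeta \equiv 1$ on the (compact) $x$-projection of $\supp(v)$, and define
\[
g(x,y) := \zeta(x)\bigl({R'}^{\,2}-|y|^2\bigr)_+.
\]
Then $g$ is non-negative, Lipschitz, compactly supported, $\supp(v) \subset \{g>0\}$, and on $\{g>0\}$ one has $\nabla_y g(x,y) = -2\zeta(x)\,y$. Set $w_\eta := v + \eta g$ for $\eta > 0$: clearly $w_\eta$ is non-negative, Lipschitz, with compact support, and $\{w_\eta>0\}=\{g>0\}$ because $\{v>0\}\subset\{g>0\}$. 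Hence
\[
\{w_\eta > 0,\ \nabla_y w_\eta = 0\}
= \bigl\{(x,y)\in\{g>0\}:\nabla_y v(x,y) = 2\eta\,\zeta(x)\,y\bigr\}.
\]

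For each fixed $(x,y)\in\{g>0\}$ with $y\neq 0$ (so $\zeta(x)>0$), the vector equation $\nabla_y v(x,y) = 2\eta\,\zeta(x)\,y$ in the unknown $\eta>0$ admits at most one solution (it forces $\nabla_y v$ to be a positive scalar multiple of $y$), while the slice $\{y=0\}$ is $\Hn$-negligible in $\Rn$. A Fubini argument on the parameter $\eta$ then shows that the displayed set has $\Hn$-measure zero for Lebesgue-a.e.\ $\eta>0$. Picking such an $\eta$ also satisfying $\eta<\epsilon/(2(1+\norm{g}_{W^{1,p}(\omega'\times\Ry)}))$ and letting $w:=w_\eta$ yields a function satisfying both \eqref{eq:lemma_approx_1} and \eqref{eq:lemma_approx_2}. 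The main subtlety — and what prevents us from simply mollifying $u$ — is producing the null set in \eqref{eq:lemma_approx_1}: generic smooth approximations of $u$ retain ``$y$-flat'' pieces on their support, and the Fubini-in-$\eta$ argument, in the spirit of Sard's theorem, is what removes them.
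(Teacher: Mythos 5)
Your proof is correct, and it takes a genuinely different route from the paper's. The paper approximates $u$ by a $C^1_c$ function $u_\epsilon$ and then by a polynomial $p_\epsilon$ in $C^1$-norm on a large ball; it then perturbs by $\delta|y|^2$ and invokes the fact that a non-identically-zero polynomial vector field vanishes only on an $\Hn$-null algebraic set, concluding $\nabla_y p_\epsilon\neq 0$ a.e., before multiplying by a polynomial cutoff $\eta_r$ to obtain compact support. Your argument replaces the polynomial machinery by a soft Sard-type Fubini argument: you perturb a smooth compactly supported approximant $v$ by $\eta\, g$ where $g(x,y)=\zeta(x)\bigl({R'}^2-|y|^2\bigr)_+$ satisfies $\nabla_y g=-2\zeta(x)\,y\neq 0$ on $\{g>0\}\setminus\{y=0\}$, and then use Fubini in the scalar parameter $\eta$ to show that, for $\Ha^1$-a.e.\ $\eta>0$, the set $\{w_\eta>0,\ \nabla_y w_\eta=0\}$ is $\Hn$-null, because for each fixed $(x,y)$ with $y\neq 0$ the equation $\nabla_y v(x,y)=2\eta\zeta(x)y$ has at most one solution in $\eta$. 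Both are perturbation arguments, but yours avoids polynomial approximation entirely and is arguably cleaner: the paper leaves implicit the verification that a generic $\delta$ makes $\nabla_y p_\epsilon+2\delta y$ vanish only on a null set, and that multiplication by the quadratic cutoff $\eta_r$ on the outer annulus preserves this property, whereas your Fubini-in-$\eta$ step is self-contained. The paper's approach, in exchange, yields a somewhat more explicit (polynomial times quadratic cutoff) approximant.
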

\begin{proof}
On multiplying $u(x,y)$ by a smooth compactly supported cut-off function $\phi:\Rnk\to\R$ with $\phi\equiv 1$ on
$\omega'$, we can assume without loss of generality that $u\in W^{1,p}(\Rn)$. By density, for every choice of
$\epsilon>0$ there exists a non-negative function $u_\epsilon\in C^1_c(\Rn)$ such that
${\norm{u-u_\epsilon}}_{W^{1,p}(\Rn)}<\epsilon$.

Let $r>1$ be such that $\supp u_\epsilon\subset B(0,r)$. Standard approximation results assure us that there exists a
polynomial  $p_\epsilon$ such that ${\norm{u_\epsilon-p_\epsilon}}_{C^1(\bar{B}(0,2r)} < \epsilon/r^{n/p}$. On
replacing, if necessary, $p_\epsilon$ with $p_\epsilon+\epsilon/r^{n/p}+\delta\abs{y}^2$, for $\delta >0$ sufficiently
small, we may assume $p_\epsilon$ to be strictly positive and $\nabla_y p_\epsilon\neq 0\, \Hn\aev$on
$\bar{B}(0,r)$.

Define $\eta_r:\Rn\to\R$ as
\[\eta_r(z)=\begin{cases}
1 &\text{if\ }\abs{z}\leq r\\
\frac{(4r^2-\abs{z}^2)}{3r^2} &\text{if\ } r<\abs{z}\leq 2r\\
0 &\text{if\ }\abs{z}> 2r
            \end{cases}\]
and let $w=p_\epsilon\eta_r$. Then there exists a constant $c=c(n,p)>0$ such that 
${\norm{u-w}}_{W^{1,p}(\Rn)} <c\epsilon$ and so equation \eqref{eq:lemma_approx_2} holds.

Finally, \eqref{eq:lemma_approx_1} is proven by considering that $w(z)>0$ if and only if $z\in B(0,2r)$ and that
$w\equiv p_\epsilon$ on $B(0,r)$ and $w\equiv p_\epsilon\eta_r$ on $B(0,2r)\setminus\bar{B}(0,r)$ and hence $w$ is still
a polynomial with $\nabla_y w\neq 0\,\Hn\aev$
\end{proof}

\begin{proof}[Proof of Theorem \ref{thm:disuguaglianza}.]
We are going to prove a stronger inequality that
actually
implies \eqref{eq:disuguaglianza}, i.e., 
 \begin{equation}\label{eq:disug-diversa}
  \int_{B\times\Ry} f(\nabla \us)\, dz\leq\int_{B\times\Ry}f(\nabla u)\,dz\,,
 \end{equation}
for every Borel set $B\subset\proj(\Omega)$. As before, we will identify $u$ with its extension $u_0$.
We can assume that the right-hand side of \eqref{eq:disug-diversa} has finite value. If not the inequality trivially
holds.

\step{1}
Let us first prove inequality \eqref{eq:disug-diversa} under additional assumptions: we assume that $\Omega$ is bounded
with
respect
to the last $k$ components and that $u\in\sobo(\Omega)$ is
non-negative and satisfies
\begin{equation}\label{eq:pf_dis1}
 \Hk \bigl( \{y\in\Rk:\nabla_y u(x,y)=0\}\cap\{y\in\Rk:u(x,y)>0\}\bigr)=0
\end{equation}
for $\Hnk\aev x\in\proj(\Omega)$. By Remark \ref{rk:us}, equation \eqref{eq:pf_dis1} holds
also for $\us$. On applying the coarea formula \eqref{eq:gencoarea} and \eqref{eq:derivata_sezione}, we get that 
\begin{equation}\label{eq:pf_dis1bis}
 \int_{\{y:\us(x,y)>0\}}f(\nabla\us)\,dy=
 \int_0^{+\infty}dt\int_{\brid\{y:\us(x,y)>t\}}\frac{f(\nabla\us)}{\abs{\nabla_y\us}}d\Ha^{k-1}\,,
\end{equation}
for
$\Hnk\aev x\in\proj(\Omega)$.
Hence, for any such $x$, assumption \eqref{eq:fconvex} and \eqref{eq:nabla_t_u}$-$\eqref{eq:nabla_i_u} give
\begin{equation}\begin{split}\label{eq:pf_dis3}
&\int_{\brid\{y:\us(x,y)>t\}}\frac{1}{\abs{\nabla_y\us}}f(\partial_1\us,\dotsc,\partial_{n-k}\us,\dotsc,\partial_n\us)\,
d\Ha^{k-1}\\
 &\quad=\int_{\brid\{y:\us(x,y)>t\}}\frac{1}{\abs{\nabla_y\us}}\ft(\partial_1\us,\dotsc,\partial_{n-k}\us, 
 \abs{\nabla_y\us})\,  d\Ha^{k-1}\\
 &\quad= -\partial_t\lambda_u(x,t)\ft\left(\frac{\nabla_x\lambda_u(x,t)}{-\partial_t\lambda_u(x,t)},
 \frac{\Ha^{k-1}(\brid\{y:\us(x,y)>t\})}{-\partial_t\lambda_u(x,t)}\right)\,,\end{split}
\end{equation}
for $\Ha^1\aev t>0$.
Let us note that for $\Hnk\aev x\in\proj(\Omega)$, the set $\{y:u(x,y)>t\}\subset\Rk$ is of finite perimeter for
$\Ha^1\aev t>0$ and $\Hk(\{y:u(x,y)>t\})<+\infty$ for $t>0$. By the isoperimetric inequality in $\Rk$, 
\begin{equation}\label{eq:pf_dis2}
 \Ha^{k-1}(\brid\{y:\us(x,y)>t\})\leq\Ha^{k-1}(\brid\{y:u(x,y)>t\})
 =\int_{\brid\{y:u(x,y)>t\}}d\Ha^{k-1}
\end{equation}
holds for $\Hnk\aev x\in\proj(\Omega)$, for $\Ha^1\aev t>0$. By assumption
\eqref{eq:fconvex} the function $\ft(\xi,\cdot)$ is non decreasing in $[0,+\infty)$ for every $\xi\in\Rnk$.
Therefore, \eqref{eq:pf_dis2} and Lemma \ref{lemma:derivate} imply that for $\Hnk\aev x\in\proj(\Omega)$
\begin{multline}\label{eq:pf_dis4}
 -\partial_t\lambda_u(x,t)\ft\left(\frac{\nabla_x\lambda_u(x,t)}{-\partial_t\lambda_u(x,t)},
 \frac{\Ha^{k-1}(\brid\{y:\us(x,y)>t\})}{-\partial_t\lambda_u(x,t)}\right)\\
 \leq\ft\left(\frac{\int_D\frac{\partial_1 u}{\abs{\nabla_y u}}d\Ha^{k-1}}
 {\int_D\frac{1}{\abs{\nabla_yu}}d\Ha^{k-1}},\dotsc,
 \frac{\int_D\frac{\partial_{n-k} u}{\abs{\nabla_y u}}d\Ha^{k-1}}
 {\int_D\frac{1}{\abs{\nabla_yu}}d\Ha^{k-1}},\frac{\int_D d\Ha^{k-1}}{\int_D\frac{d\Ha^{k-1}}{\abs{\nabla_y u}}}
 \right)\cdot
 \int_D\frac{d\Ha^{k-1}}{\abs{\nabla_y u}}=:\mathcal{I}
\end{multline}
for $\Ha^1\aev t>0$, where $D:=\brid\{y:u(x,y)>t\}$. Recalling that $f$ is convex and so $\ft$ is, Jensen's
inequality gives
\begin{equation}\label{eq:pf_dis5}
 \mathcal{I}\leq\int_{\brid\{y:u(x,y)>t\}}\frac{1}{\abs{\nabla_y u}}\ft(\nabla_x u,\abs{\nabla_y u})\, d\Ha^{k-1}.
\end{equation}
Putting together \eqref{eq:pf_dis3}, \eqref{eq:pf_dis4} and \eqref{eq:pf_dis5} we get
\begin{multline}\label{eq:pf_dis6}
 \int_{\brid\{y:\us(x,y)>t\}} \frac{1}{\abs{\nabla_y \us}}\ft(\nabla_x\us,\abs{\nabla_y \us})\,d\Ha^{k-1}\\
 \leq \int_{\brid\{y:u(x,y)>t\}} \frac{1}{\abs{\nabla_y u}}\ft(\nabla_x u,\abs{\nabla_y u})\,d\Ha^{k-1}\,,
\end{multline}
for $\Hnk\aev x\in\proj(\Omega)$ and for $\Ha^1\aev t>0$.

Integrating \eqref{eq:pf_dis6}, first with respect to $t$ and
then with respect to $x$, using equation \eqref{eq:pf_dis1bis} for both $u$ and $\us$, yields
\begin{equation}\label{eq:pf_dis7}\begin{split}
 \int_{B\times\Ry} f(\nabla \us) \, dx \, dy &= \int_{B} dx \int_{\brid\{y:\us(x,y)>0\}} f(\nabla\us)\,dy \\
 &=\int_{B} dx \int_0^{+\infty} dt \int_{\brid\{y:\us(x,y)>t\}}\frac{f(\nabla\us)}{\abs{\nabla_y \us}}d\Ha^{k-1}\\
 &\leq \int_{B} dx\int_0^{+\infty}dt \int_{\brid\{y:u(x,y)>t\}}\frac{f(\nabla u)}{\abs{\nabla_y u}}d\Ha^{k-1}\\
 &=\int_{B\times\Ry} f(\nabla u)\,dx\,dy\,.\end{split}
\end{equation}

\step{2} Let us remove the additional assumptions we used in Step 1. Let $u\in\sobo(\Omega)$ be non-negative and let
$\omega\Subset\proj(\Omega)$ be an open set. Lemma
\ref{lemma:approx}
gives the existence of a sequence $\{u_h\}$ of non-negative Lipschitz functions, compactly supported in $\Rn$, that
satisfy
\eqref{eq:pf_dis1} and such that $u_h\to u$ strongly in $W^{1,1}(\omega\times\Ry)$.

If we assume that 
\begin{equation}\label{eq:pf_dis_linear}
 0\leq f(\xi)\leq C(1+\abs{\xi})\,\text{\ for\ some\ }C>0,\quad\forall \xi\in\Rn,
\end{equation}
then $f$ is globally Lipschitz continuous and therefore $f(\nabla u_h)\to f(\nabla u)$ strongly in
$L^1(\omega\times\Ry)$. The continuity of Steiner symmetrization, see equation \eqref{lemma:s_continuous}, with respect
to
the
$L^1$-convergence gives us $\us_h\to\us$ strongly in $L^1(\omega\times\Ry)$. By semicontinuity 
(see, e.g., \cite{buttazzo}*{Theorem 4.2.8}) and \eqref{eq:pf_dis7} we have
\[\begin{split}
\int_{\omega\times\Ry}f(\nabla\us)\,dx\,dy&\leq \liminf_{h\to +\infty} \int_{\omega\times\Ry} f(\nabla\us_h)\,dx\,dy\\
&\leq\liminf_{h\to +\infty}\int_{\omega\times\Ry} f(\nabla u_h)\,dx\,dy= \int_{\omega\times\Ry} f(\nabla u)\,dx\,dy\,,
  \end{split}\]
and so \eqref{eq:disug-diversa} holds.

Let us remove assumption \eqref{eq:pf_dis_linear}. Since $f$ is non-negative and convex and satisfies
\eqref{eq:fconvex}, there exist a sequence of
vectors $\{a_j\}\subset\Rnk$ and two sequences of numbers $\{b_j\}\subset\R$, $\{c_j\}\subset\R$ such that
\[f(\xi)=\sup_{j\in\N}\{a_j\cdot\xi_x +b_j\abs{\xi_y}+c_j\}=\sup_{j\in\N}\{(a_j\cdot\xi_x
+b_j\abs{\xi_y}+c_j)^+\}\,,\quad\forall\xi\in\Rn\,.\]
For $N\in\N$ define
\begin{equation*}
 f_N(\xi):=\sup_{1\leq j\leq N}
 \{(a_j\cdot\xi_x +b_j\abs{\xi_y}+c_j)^+\}\,.
\end{equation*}
Clearly, $f_N(\xi)\nearrow f(\xi)$ pointwise monotonically. Observing that $f_N$ satisfies \eqref{eq:fconvex} and
\eqref{eq:pf_dis_linear} we get that \eqref{eq:disug-diversa} holds for such $f_N$. Now the thesis follows by monotone
convergence theorem.
\end{proof}

\begin{remark}\label{rk:w1p}
Actually, inequality \eqref{eq:disuguaglianza} holds also for any $u$ in $W^{1,p}_{0,y}(\Omega)$. To verify this,
define, for
every
$\epsilon>0$, $u_\epsilon:=\max\{u-\epsilon,0\}$. Clearly, the support of $u_\epsilon$ has finite measure in
$\omega\times\Ry$
for every $\omega\Subset\proj(\Omega)$. Therefore $u_\epsilon\in \sobo(\Omega)$. Since
$(u_\epsilon)^\sigma=(\us)_\epsilon$ and $\nabla u_\epsilon=\nabla u \chi_{\{u>\epsilon\}}\, \Hn\aev$in $\Rn$, by
monotone convergence theorem and applying \eqref{eq:disug-diversa} to $u_\epsilon$, we get
\[\begin{split}
 \int_{B\times\Ry}f(\nabla\us)\,dz&=\lim_{\epsilon\to 0^+}\int_{B\times\Ry}f(\nabla(\us)_\epsilon)\,dz
 =\lim_{\epsilon\to 0^+}\int_{B\times\Ry}f(\nabla (u_\epsilon)^\sigma)\,dz\\
 &\leq \lim_{\epsilon\to 0^+}\int_{B\times\Ry}f(\nabla u_\epsilon)\,dz
 =\int_{B\times\Ry}f(\nabla u)\, dz\,.
 \end{split}
\]
\end{remark}

We now pass to the equality cases. Next result shows that if equality holds in the Pólya-Szegő inequality, then almost
every $(x,t)$-section of the subgraph is equivalent to a ball.
\begin{lemma}\label{lemma:sezioni_palle}
 Let $f:\Rn\to\R$ be a non-negative strictly convex function satisfying \eqref{eq:fconvex} that vanishes in $0$ 
 and let $u\in\sobo(\Omega)$ be a non-negative function.
If equality \eqref{eq:equality} holds, then for
$\Ha^{n-k+1}\aev 
 (x,t)\in\esspjt(\Su)$ there exists  $R(x,t)>0$ such that the set
 \begin{equation*}
  \{y:u(x,y)>t\}\text{\ is\ equivalent\ to\ }\{\abs{y} < R(x,t)\}.
 \end{equation*}
 \end{lemma}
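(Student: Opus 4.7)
The plan is to revisit the chain \eqref{eq:pf_dis3}--\eqref{eq:pf_dis7} from Step 1 of the proof of Theorem \ref{thm:disuguaglianza} and extract, from the global equality \eqref{eq:equality}, pointwise equalities at each step; the strict convexity of $f$ will then isolate the slice-wise isoperimetric piece, which is exactly what delivers the ball conclusion.

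Assume first the extra regularity \eqref{eq:pf_dis1} on $u$. By Remark \ref{rk:us} this is inherited by $\us$, so Lemma \ref{lemma:derivate} applies to both $u$ and $\us$ and the full derivation \eqref{eq:pf_dis3}--\eqref{eq:pf_dis7} is available. Since \eqref{eq:disug-diversa} holds over every Borel set $B\subset\proj(\Omega)$, the global equality \eqref{eq:equality} forces equality in \eqref{eq:pf_dis6} for $\Ha^{n-k}$-a.e.\ $x$ and $\Ha^1$-a.e.\ $t\in(0,M(x))$, and tracing back this in turn forces equality in both the monotonicity step \eqref{eq:pf_dis4} and the Jensen step \eqref{eq:pf_dis5}. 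From the strict convexity of $f$ together with \eqref{eq:fconvex} one checks that $\ft(\xi,\cdot)$ is strictly increasing on $[0,+\infty)$ for every $\xi$: radial strict convexity makes $\ft(\xi,\cdot)$ strictly convex on $[0,+\infty)$, evaluating at antipodal points gives $\ft(\xi,0)<\ft(\xi,r)$ for $r>0$, and strict convexity then propagates into strict monotonicity. Consequently equality in \eqref{eq:pf_dis4} forces
\[
\Ha^{k-1}\bigl(\brid\{y:\us(x,y)>t\}\bigr)=\Ha^{k-1}\bigl(\brid\{y:u(x,y)>t\}\bigr).
\]
Since the left-hand side is the perimeter of the Euclidean ball of $\Ha^k$-measure $\lambda_u(x,t)$, the rigidity of the isoperimetric inequality in $\Rk$ yields that $\{y:u(x,y)>t\}$ is equivalent to the ball of radius $R(x,t)=(\lambda_u(x,t)/\omega_k)^{1/k}$. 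The equivalence \eqref{eq:equivalenti} then converts this into the desired statement for $\Ha^{n-k+1}$-a.e.\ $(x,t)\in\esspjt(\Su)$.

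The remaining task is to drop the temporary hypothesis \eqref{eq:pf_dis1}. The natural approach is to approximate $u$ by Lipschitz $u_h$ provided by Lemma \ref{lemma:approx} (which by construction satisfy \eqref{eq:pf_dis1}) with $u_h\to u$ strongly in $W^{1,1}_{\loc}(\proj(\Omega)\times\Ry)$; by \eqref{lemma:s_continuous} one then has $\us_h\to\us$ in $L^1_{\loc}$. Lower semicontinuity of $v\mapsto\int f(\nabla v)$ combined with the Pólya-Szegő inequality applied to each $u_h$ forces $\int f(\nabla\us_h)\to\int f(\nabla\us)$, so the energy defect $\int f(\nabla u_h)-\int f(\nabla\us_h)$ vanishes in the limit. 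The principal obstacle is to transport the asymptotic equality into the slice-wise ball conclusion for $u$ itself: one expects to extract a subsequence along which $\nabla u_h\to\nabla u$ a.e., apply Fatou to slice perimeters via the lower semicontinuity of perimeter in $\Rk$, and then close the argument by the isoperimetric rigidity applied to the limiting sections $\{y:u(x,y)>t\}$. Alternatively, one may sidestep this delicate passage by lifting everything to the subgraph $\Su$ through Proposition \ref{prop:J=F} and reducing directly to the rigidity of the weighted perimeter inequality \eqref{eq:bv_in_aux}, which does not require \eqref{eq:pf_dis1}.
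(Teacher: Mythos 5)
Your core argument matches the paper's exactly: equality in \eqref{eq:equality} together with the local inequality \eqref{eq:disug-diversa} forces equality in the chain \eqref{eq:pf_dis3}--\eqref{eq:pf_dis7}; the monotonicity step \eqref{eq:pf_dis4} and the Jensen step \eqref{eq:pf_dis5} are then saturated; strict convexity of $f$ makes $\ft(\xi,\cdot)$ strictly increasing on $[0,+\infty)$ (your even-restriction argument is correct and is what the paper silently invokes), so equality in \eqref{eq:pf_dis4} collapses to equality in the slicewise isoperimetric inequality \eqref{eq:pf_dis2}, and the rigidity of the isoperimetric theorem in $\Rk$ delivers the ball. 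The translation from a.e.\ $x$ and a.e.\ $t\in(0,M(x))$ to a.e.\ $(x,t)\in\esspjt(\Su)$ via \eqref{eq:equivalenti} is also how the paper concludes. The only cosmetic difference is the auxiliary hypothesis you fix at the outset: you impose \eqref{eq:pf_dis1} (no critical points in the $y$-slices on the positivity set), whereas the paper imposes the weaker \eqref{eq:condizioni_u} and handles the set where $\nabla_y u$ could vanish by restricting to the Vol'pert good set $G_{\Su}\cap G_{\Sus}$. Both are legitimate reductions since neither appears in the lemma's statement and must eventually be removed.

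Where you diverge from the paper is in how you propose to remove the auxiliary hypothesis. Your first route — approximate $u$ by Lipschitz $u_h$ satisfying \eqref{eq:pf_dis1} and try to pass the asymptotic vanishing of the energy defect $\int f(\nabla u_h)-\int f(\nabla \us_h)$ to a conclusion about the level sets of $u$ itself — does not close as written: a vanishing defect along the sequence controls the level sets of $u_h$ only asymptotically, and converting that into isoperimetric rigidity for the limit slices $\{y:u(x,y)>t\}$ would require a quantitative stability estimate rather than Fatou plus lower semicontinuity of perimeter; you flag this as the principal obstacle, and rightly so, as Fatou gives the inequality in the wrong direction for rigidity. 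Your second route — lift through Proposition \ref{prop:J=F} and run the argument at the level of subgraphs and generalized normals, so that the rigidity of the weighted perimeter inequality \eqref{eq:bv_in_aux} applies without any slicewise non-degeneracy hypothesis — is exactly what the paper does, via Remark \ref{rk:fsobo}, which observes that $F_f$ (now allowed to take the value $+\infty$) still satisfies the hypotheses of Proposition \ref{thm:equal_F}. So your ``alternative'' is in fact the paper's actual mechanism, and you should promote it from a side remark to the primary route; the approximation plan should be dropped.
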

\begin{proof}
We prove here the lemma under the additional assumption that $u$ satisfies \eqref{eq:condizioni_u}. For the general
case see Remark \ref{rk:fsobo}.

Assumption \eqref{eq:equality} and inequality \eqref{eq:disug-diversa} imply that 
\begin{equation}\label{eq:pr_palla}
 \int_{B\times\Ry} f(\nabla \us)\,dz=\int_{B\times\Ry} f(\nabla u)\,dz
\end{equation}
for every Borel set $B\subset \proj(\Omega)$. On choosing $A:=\esspj(\Omega)\cap G_{\Su}\cap G_{{\Sus}}$, from Theorem
\ref{thm:volpert} and \eqref{eq:GMS2} we see that $\Ha^{n-k}(\esspj(\Omega)\setminus A)=0$ and that $\nabla_y
u(x,y)\neq 0$ on $A\times\Ry$. 

Equality \eqref{eq:pr_palla} assures us that equality holds in \eqref{eq:pf_dis7} with $B$ replaced by $A$. By
\eqref{eq:condizioni_u} $u$ is $\Hn\aev$strictly positive in $\Omega$, and therefore we have equalities also in
\eqref{eq:pf_dis4} and \eqref{eq:pf_dis5}. Since $\ft(\xi,\cdot)$ is strictly increasing in $[0,+\infty)$ we get an
equality in \eqref{eq:pf_dis2}. Applying the isoperimetric theorem in $\Rk$, is clear that $\{y:u(x,y)>t\}$ is
equivalent to a ball of radius $R(x,t)$ for $\Hnk\aev x\in\proj(\Omega)$ and $\Ha^1\aev t\in(0,M(x))$. By the 
$\Hn\aev$positivity of $u$, we have that $\esspj(\Su)$ is equivalent to $\proj(\Omega)$. Equation
\eqref{eq:equivalenti} implies that $\esspjt(\Su)$ is equivalent to $\bigcup_{x\in\proj(\Omega)}
\{x\}\times(0,M(x))$. Hence the lemma is proven.
\end{proof}

\begin{proof}[Proof of Proposition \ref{prop:nu_uguale}.]
The proof is based on the same induction argument of \cite{barcafus}*{Proposition 3.6}. 
We already observed in Remark \ref{rk:us} that condition \eqref{eq:condizioni_u} implies \eqref{eq:nu_ug_2}. 
Let us now prove the converse implication.
The case $k=1$ is proven in
\cite{CF}*{Proposition~ 2.3}.

\step{1} Let $k>1$ and let $v\in\sobo(\Omega)$ be a non-negative function satisfying
\eqref{eq:condizioni_u} and such that for $\Ha^{n-k+1}\aev (x,t)\in\esspjt(\mathcal{S}_v)$ the set $\{y:v(x,y)>t\}$ is
equivalent to a $k$-dimensional ball. For $i=1,\dotsc,k$, set
\[
 C^i:=\{(x,y)\in\Omega :\partial_{y_i} v(x,y)=0\}\cap\{(x,y)\in\Omega : \text{either\ }M(x)=0\text{\ or\ }
 v(x,y)<M(x)\}\,.
\]
We claim that for $v$ as above $\Hn(C^i)=0$.
Indeed, by Theorem \ref{thm:GMS2}, we see that the set
\[
 A^i=\{(x,y,t)\in\brid\mS_v: \nu_{y_i}^{\mS_v}=0\}\cap\{(x,y,t)\in\brid\mS_v : \text{either\ } M(x)=0\text{\ or\ }
t<M(x)\}
\]
satisfies 
\begin{equation}\label{eq:boh1}
\Hn(A^i)\geq\Hn(C^i)\,.\end{equation}
From Theorem \ref{thm:volpert}, up to $\Ha^{k-1}$ negligible sets, we get
\[
 A^i_{x,t}=\{y\in(\brid\mS_v)_{x,t}:\nu_{y_i}^{(\mS_v)_{x,t}}=0\}\cap\{(x,y,t)\in\brid\mS_v : \text{either\ }
M(x)=0\text{\ or\ } t<M(x)\}\,.
\]
Since almost every section of the subgraph of $v$ is a ball, we see that $\Ha^{k-1}(A^i_{x,t})=0$. Hence, using
\eqref{eq:boh1}, assumption \eqref{eq:condizioni_u} with Theorem \ref{thm:GMS2} and the coarea formula, we have
\[\begin{split}
\Hn(C^i)\leq\Hn(A^i)=\Hn(A^i\cap\{\nu_y^{\mS_v}\neq 0\})
=\int_{\projt(\brid\mS_v)}dx\,dt\int_{(\brid\mS_v)_{x,t}\cap A^i_{x,t}}\frac{d\Ha^{k-1}}{\abs{\nu_y^{\mS_v}}}=0\,,
\end{split}\]
and so the claim is proven.

\step{2} For $i=0,\dotsc,k$ define recursively $\Omega^0:=\Omega$, $\Omega^i:=(\Omega^{i-1})^{S_i}$, where $S_i$ is the
$1$-codimensional Steiner symmetrization with respect to $y_i$. The functions $u^i$ are defined accordingly. Assumption
\eqref{eq:equality} and Theorem \ref{thm:disuguaglianza} imply that
\[
 \int_{\Omegas}f(\nabla \us)\,dz=\int_{\Omega^{k-1}} f(\nabla u^{k-1})\,dz=\dotsb=\int_{\Omega^1} f(\nabla u^1)\,dz
 =\int_{\Omega}f(\nabla u)\,dz\,.
\]
Hence, by Lemma \ref{lemma:sezioni_palle}, we see that ${\mS}_{u^k}$ is equivalent to $\Sus$. From
\eqref{eq:nu_ug_2}
and \eqref{eq:boh1} we see that
\[
 \Hn\bigl(\{(x,y)\in\Omega^k : \nabla_{y_k} u^k (x,y)=0\}\cap\{(x,y)\in\Omega^k:
\text{either\ }M(x)=0\text{\ or\ }0<u^k<M(x)\}\bigr)=0\,.
\]
Since the assertion holds for $k=1$, we deduce
\[
 \Hn\bigl(\{(x,y)\in\Omega^{k-1}:\nabla_{y_{k-1}} u^{k-1}=0)\}\cap\{(x,y)\in\Omega^{k-1}:M(x)=0
\text{\
or\ }0<u^{k-1}<M(x)\}\bigr)=0
\]
and this clearly implies that 
\[
 \Hn\bigl(\{(x,y)\in\Omega^{k-1}:\nabla_{y} u^{k-1}=0\}\cap\{\text{either\ }M(x)=0\text{\ or\ }
0<u^{k-1}<M(x)\}\bigr)=0\,.
\]
The assertion now follows iterating this argument.
\end{proof}

\begin{proof}[Proof of Theorem \ref{thm:equality}.]
The first statement is Lemma \ref{lemma:sezioni_palle}, see also Remark \ref{rk:fsobo}.

By \eqref{eq:su_equiv_sus} it is sufficient to show that $(\Su)^\sigma$ is equivalent to $\Su$. From the previous
statement,
we know that for $\Ha^{n-k+1}\aev (x,t)\in\esspjt(\Su)$ every section of $(\Su)_{x,t}$ is
equivalent to a ball in $\Rk$ with radius $R(x,t)$ and denote by $b:\Rnk\times\Rt\to\R^{n+1}$ the center of this ball.
On replacing $u$ by $\us$ in Lemma \ref{lemma:sezioni_palle}, we see that for $\Ha^{n-k+1}\aev
(x,t)\in\esspjt((\Su)^\sigma)$ every $(x,t)$
section of $(\Su)^\sigma$ is equivalent to a ball of the same radius $R(x,t)$ and denote by
$\tilde{b}:\Rnk\times\Rt\to\R^{n+1}$
the center of the ball. From the very definition of the Steiner rearrangement we have that
$\tilde{b}(x,t)\equiv(x,0,t)$. Now it is sufficient to show that $b-\tilde{b}\equiv(0,c,0)$ for some $c\in\Rk$.

The case $k=1$ is \cite{CF}*{Theorem 2.2}. Let $k>1$ and for $i=1,\dotsc,k$ let $S_i$ be the Steiner symmetrization in
codimension $1$
with
respect to $y_i$. Clearly,
$\Omegas=(\Omegas)^{S_i}=(\Omega^{S_i})^\sigma$
and therefore \eqref{eq:disuguaglianza} implies
\begin{equation}\label{eq:pf_ug_1}
 \int_{\Omegas}f(\nabla \us)\,dz\leq\int_{\Omega^{S_i}}f(\nabla u^{S_i})\,dz\leq\int_\Omega f(\nabla u)\,dz\,,
\end{equation}
for $i=1,\dotsc,k$. From \eqref{eq:equality} we get equalities in \eqref{eq:pf_ug_1}. Since almost every
section $(\Su)_{x,t}$ is a ball, arguing as in Step 1 of the proof of Proposition \ref{prop:nu_uguale} we get  
\[
\Ha^n\bigl(\{z\in\Omega:\partial_{y_i}u(z)=0\} \cap
\{z\in\Omega:\text{ either }M(z')=0 \text{ or } u(z)<M(z')\}\bigr)=0\,,
\]
where $z':=(x,y_1,\dotsc,y_{i-1},y_{i+1},\dotsc y_k)$. Similarly, we also
get that
\[
 \Ha^{n-1}\bigl(\{z\in\brid\Omega : \nu^\Omega_{y_i}=0\}\cap\{\pi_{n-1}(\Omega)\times\R_{y_i}\}\bigr)=0\,,
\]
where $\pi_{n-1}$ is the projection on $z'$. Therefore, by the $k=1$ case, we have that $(b(x,t))_{y_1}\equiv c_1$ for
some
$c_1\in\R$. Now iterate the procedure and obtain $(b(x,t))_y\equiv(c_1,\dotsc,c_k)$ and so $b-\tilde{b}\equiv(0,c,0)$
with $c=(c_1,\dotsc,c_k)$.
\end{proof}

\section{\texorpdfstring{The $BV$ case}{The BV case}}\label{sec:bv}
In this section we are going to prove the Pólya-Szegő inequality for the Steiner rearrangement of a function of
 bounded variation and the characterization of the equality cases. As already observed in the introduction, we will
first prove analogous results for geometrical functionals depending on the generalized inner normal. In this setting, we
will first show a Pólya-Szegő principle in Theorem \ref{thm:F-ineq} an the characterization of the equality cases in
Theorem \ref{thm:equal_F}. 

Next two Lemmata will be used in the proof of Theorem \ref{thm:F-ineq}.
\begin{lemma}\label{lemma:F-ineq-1}
 Let $U\subset\Rnk\times\Rt$ be an open set. Let $F:\R^{n+1}\to [0,+\infty]$ be a convex function satisfying 
\eqref{eq:F-hom} and
\eqref{eq:F-radial} and let $E$ be a set of finite perimeter in $U\times\Ry$ such that
$\Ha^{n+1}(E\cap(U\times\Ry))<+\infty$. Then
\begin{equation}\label{eq:F-ineq-1}\begin{split}
 \int_{\brid\Es\cap(B\times\Ry)} F(\nu^{\Es})\, d\Hn & \leq
 \int_B \Ft \left( \frac{D_1 L}{\abs{DL}},\dotsc,\frac{D_{n-k} L}{\abs{DL}},0,\frac{D_t L}{\abs{DL}}\right)\,
d\abs{DL}\\
 &\quad+\Ft(0,\dotsc,0,1,0) \abs{D_y\chi_{\Es}}(B\times\Ry)
 \end{split}
\end{equation}
for every Borel set $B\subset U$.
\end{lemma}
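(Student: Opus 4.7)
The plan is to decompose $\brid\Es\cap(B\times\Ry)$ into
\[
A_1:=\brid\Es\cap(B\times\Ry)\cap\{\nu^{\Es}_y\neq 0\}\quad\text{and}\quad A_2:=\brid\Es\cap(B\times\Ry)\cap\{\nu^{\Es}_y=0\},
\]
handle each piece separately, and then combine them using the decomposition \eqref{eq:formula_deriv} of $DL$. Throughout I exploit that, by rotational symmetry of $\Es$ in $y$, the sections $\Es_{x,t}$ are $k$-balls of radius $(L(x,t)/\omega_k)^{1/k}$; thus by Vol'pert's theorem, for $\Ha^{n-k+1}$-a.e.\ $(x,t)\in G_{\Es}$ the reduced boundary $\brid\Es_{x,t}$ is a sphere on which $\nu^\Es_y/\abs{\nu^\Es_y}=y/\abs{y}$, the ratio $(\nu^\Es_x,\nu^\Es_t)/\abs{\nu^\Es_y}$ is constant, and \eqref{eq:lemma_partialLEs} gives $\nabla L(x,t)=\Ha^{k-1}(\brid\Es_{x,t})\,(\nu^\Es_x,\nu^\Es_t)/\abs{\nu^\Es_y}$.

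For the $A_1$ contribution, applying the coarea formula \eqref{eq:coarea} with the Borel function $g=F(\nu^\Es)/\abs{\nu^\Es_y}$, using \eqref{eq:F-radial} together with 1-homogeneity of $F$ to rewrite this as the constant value $\Ft(\nu^\Es_x/\abs{\nu^\Es_y},\,1,\,\nu^\Es_t/\abs{\nu^\Es_y})$ on each sphere $\brid\Es_{x,t}$, and finally absorbing $\Ha^{k-1}(\brid\Es_{x,t})$ into the outer slots via 1-homogeneity of $\Ft$ and \eqref{eq:lemma_partialLEs}, yields
\[
\int_{A_1}F(\nu^\Es)\,d\Hn=\int_B \Ft\bigl(\nabla_x L,\,\Ha^{k-1}(\brid\Es_{x,t}),\,\partial_t L\bigr)\,dx\,dt.
\]
Subadditivity of $\Ft$ (from convexity, 1-homogeneity and $\Ft(0)=0$) then gives
\[
\int_{A_1}F(\nu^\Es)\,d\Hn\leq\int_B\Ft(\nabla_x L,0,\partial_t L)\,dx\,dt + \Ft(0,\ldots,0,1,0)\int_B\Ha^{k-1}(\brid\Es_{x,t})\,dx\,dt,
\]
and a second use of \eqref{eq:coarea} with $g\equiv1$, together with $\abs{\nu^\Es_y}\equiv 0$ on $A_2$, identifies the last double integral with $\abs{D_y\chi_\Es}(B\times\Ry)$.

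For the $A_2$ contribution, $F(\nu^\Es)=\Ft(\nu^\Es_x,0,\nu^\Es_t)$. The key structural observation is that, by the rotational symmetry of $\Es$ in $y$, the $(x,t)$-components of $\nu^\Es$ on $A_2$ do not depend on $y$: points of $A_2$ lie on ``vertical'' portions of $\brid\Es$ coming from jumps of $L$ in $(x,t)$, and on such a piece the normal points purely in the jump direction, which is a function of $(x,t)$ alone. Consequently the vector pushforward of $(\nu^\Es_x,\nu^\Es_t)\,\Hn\rest A_2$ under $(x,y,t)\mapsto(x,t)$ exhibits no fiberwise cancellation; comparing with \eqref{eq:formula_deriv} shows that it coincides with $\Ds L$, its total variation equals the scalar pushforward of $\Hn\rest A_2$, and $\Ds L/\abs{\Ds L}$ equals the common fiberwise value of $(\nu^\Es_x,\nu^\Es_t)$. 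Integration yields
\[
\int_{A_2}\Ft(\nu^\Es_x,0,\nu^\Es_t)\,d\Hn=\int_B \Ft\!\left(\frac{\Ds L}{\abs{\Ds L}},0\right)d\abs{\Ds L}.
\]
Since $\Da L=\nabla L\,dx\,dt$ and $\Ds L$ are mutually singular, $\abs{DL}=\abs{\Da L}+\abs{\Ds L}$, and summing the $A_1$ bound with the $A_2$ identity regroups the absolutely continuous and singular contributions into $\int_B\Ft(DL/\abs{DL},0)\,d\abs{DL}$, giving \eqref{eq:F-ineq-1}.

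The main obstacle is the $A_2$ identity: a purely abstract Jensen-type estimate for vector pushforwards goes in the opposite direction, so one really has to exploit the rotational symmetry of $\Es$ to rule out fiberwise cancellations and cleanly match the pushforward with $\Ds L$ through \eqref{eq:formula_deriv}. Once this geometric identification is in place, the $A_1$ piece is handled by routine coarea plus subadditivity.
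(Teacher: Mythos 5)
Your decomposition of $\brid\Es\cap(B\times\Ry)$ into $A_1=\{\nu^\Es_y\neq 0\}$ and $A_2=\{\nu^\Es_y=0\}$ is genuinely different from the paper's route. The paper never makes this split: it applies the pointwise subadditivity $F(\nu^\Es)\leq\Ft(\nu^\Es_x,0,\nu^\Es_t)+\Ft(0,\abs{\nu^\Es_y},0)$ globally on $\brid\Es$, integrates the second summand to $\Ft(0,1,0)\abs{D_y\chi_{\Es}}(B\times\Ry)$, and estimates the first summand by a smooth-approximation argument: replace $L$ by smooth $L_j>0$ with $\nabla L_j\,\Hn\weakto DL$ and $\int\abs{\nabla L_j}\to\abs{DL}(B)$, set $E_j=\{\omega_k\abs{y}^k\leq L_j\}$, compute $\int_{\brid E_j}\Ft(\nu^{E_j}_x,0,\nu^{E_j}_t)\,d\Hn=\int_B\Ft(\nabla_x L_j,0,\partial_t L_j)\,dx\,dt$ for smooth sets, and pass to the limit via Reshetnyak's lower semicontinuity and continuity theorems. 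This entirely sidesteps the structure of the vertical set $\{\nu^\Es_y=0\}$.

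Your $A_1$ computation (coarea, Vol'pert, radial symmetry of the spherical sections $\brid\Es_{x,t}$, \eqref{eq:lemma_partialLEs}, subadditivity of $\Ft$) is correct and is essentially the same chain the paper carries out later, in the proof of Theorem \ref{thm:F-ineq} on the set $B_2$; it accounts for exactly the $\Da L$ contribution. The genuine gap is in $A_2$, and it is exactly where you flag it. From \eqref{eq:formula_deriv} one does get that $\Ds L$ equals the pushforward of the vector measure $\nu^\Es_{x,t}\,\Hn\rest A_2$ under $(x,y,t)\mapsto(x,t)$. But the further assertions you need, namely that $\abs{\Ds L}$ is the pushforward of $\Hn\rest A_2$ and that $\Ds L/\abs{\Ds L}$ agrees with the common fiberwise value of $\nu^\Es_{x,t}$, require $\nu^\Es_{x,t}$ to be $\Hn$-a.e.\ constant on each fiber of $A_2$ over $(x,t)$. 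Rotational symmetry of $\Es$ only forces $\nu^\Es_{x,t}$ to depend on $(x,\abs{y},t)$; constancy across the different radii $\abs{y}$ present in a single fiber is the non-trivial structural claim you are invoking, and the heuristic of a single "jump direction" covers the jump part of $\Ds L$ but says nothing about a possible Cantor part. Without a structure theorem for the vertical portions of $\brid\Es$, which the paper does not quote in this form, your $A_2$ identity is not established, and, as you correctly observe, the abstract Jensen/pushforward estimate goes the wrong way, so \eqref{eq:F-ineq-1} cannot be recovered from your two pieces without it. This is precisely the difficulty the paper's Reshetnyak-based approximation is built to avoid: Reshetnyak lower semicontinuity delivers the one-sided bound directly, with no information needed about cancellation on $\{\nu^\Es_y=0\}$.
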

\begin{proof}


Without loss of generality we can assume that $B$ is a bounded open set.

\step{1} Let us prove inequality \eqref{eq:F-ineq-1} assuming that $F$ is everywhere finite, hence continuous.
By approximation we can find a sequence of
functions
$\{L_j\}\subset C^\infty(B)$ such that $L_j(x,t)>0$ for every $(x,t)\in B$, $L_j\to L$ in $L^1(B)$, $\nabla
L_j\,\Hn\weakto DL$ weakly* in the sense of measures and
\begin{equation}\label{eq:pr_lem_F1-1}
 \int_B \abs{\nabla L_j}\,dx\,dt\to \abs{DL}(B)\,.
\end{equation}
For $j\in\N$ define the sets $E_j:=\{(x,y,t): (x,t)\in B,\,\omega_k\abs{y}^k\leq L_j(x,t)\}.$ Then
$\chi_{E_j}\to\chi_{\Es}$ in $L^1(B\times\Ry)$ and since 
\[
 \abs{D\chi_{E_j}}(B\times\Ry)=P(E_j;B\times\Ry)\leq C\,,
\]
for some constant depending only on $B$, we deduce that
\begin{equation}\label{eq:pr_lem_F1-2}
 D\chi_{E_j}\weakto D\chi_{\Es}\text{\ weakly*\ in\ }B\times\Ry\,.
\end{equation}
Using the convexity of $F$, assumption \eqref{eq:F-hom} and \eqref{eq:DChi} we have
\begin{equation}\label{eq:pr_lem_F1-3}
\begin{split} 
&\int_{\brid{\Es}\protect\cap (B\times\Ry)} F(\nu^{\Es})\,d\Hn \\
&\quad\leq\int_{\brid{\Es}\cap (B\times\Ry)} \Ft(\nu_x^{\Es},0,\nu_t^{\Es})\,d\Hn+
\int_{\brid{\Es}\cap (B\times\Ry)} \Ft(0,\nu_y^{\Es},0)\,d\Hn\\
&\quad=\int_{B\times\Ry}
\Ft\left(\frac{D_x \chi_{\Es}}{\abs{D \chi_{\Es}}},0,\frac{D_t \chi_{\Es}}{\abs{D\chi_{\Es}}}\right)
d\abs{D\chi_{\Es}} 
+ \Ft(0,1,0) \int_{\brid{\Es}\cap (B\times\Ry)} \abs{\nu_y^{\Es}}\,d\Hn\,.
\end{split}
\end{equation}
Using \eqref{eq:pr_lem_F1-2}, Reshetnyak's lower semicontinuity Theorem (see, e.g., \cite{AFP}*{Theorem 2.38})
and \eqref{eq:DChi} we get
\begin{equation}\label{eq:pr_lem_F1-4}
 \begin{split}
\int_{B\times\Ry} \Ft\left(\frac{D_x \chi_{\Es}}{\abs{D \chi_{\Es}}},0,\frac{D_t \chi_{\Es}}{\abs{D\chi_{\Es}}}\right)
d\abs{D\chi_{\Es}}
&\leq \liminf_{j\to\infty} \int_{B\times\Ry}
\Ft\left(\frac{D_x \chi_{E_j}}{\abs{D \chi_{E_j}}},0,\frac{D_t \chi_{E_j}}{\abs{D\chi_{E_j}}}\right)
d\abs{D\chi_{E_j}}\\
&= \liminf_{j\to\infty} \int_{\brid E_j\cap(B\times\Ry)}\Ft(\nu_x^{E_j},0,\nu_t^{E_j})\,d\Hn\,.
 \end{split}
\end{equation}
Since the functions $L_j$ are smooth, for $i=1,\dotsc,n-k,t$
\[
 \nu_i^{E_j}(x,y,t)=\frac{\partial_i L_j(x,t)}{\sqrt{p_{j}(x,t)^2+\abs{\nabla L_j(x,t)}^2}}
\]
for every $(x,y,t)\in\brid E_j\cap(B\times\Ry)$, where $p_{j}(x,t)$ stands for the perimeter of ${(E_j)}_{x,t}$. Using
this
equality with \eqref{eq:pr_lem_F1-3},
\eqref{eq:pr_lem_F1-4} and \eqref{eq:DChi} we see that
\begin{equation}\label{eq:pr_lem_F1-5}
 \begin{split}
&\int_{\brid\Es\cap(B\times\Ry)} F(\nu^{\Es})\,d\Hn\\
&\quad \leq\liminf_{j\to\infty} \int_{\brid E_j\cap(B\times\Ry)} 
F\Biggl(\frac{\partial_i L_j}{\sqrt{p_j^2+\abs{\nabla L_j}^2}},0,\frac{\partial_t L_j}{\sqrt{p_j^2+\abs{\nabla
L_j}^2}}\Biggr)\,d\Hn \\
&\qquad  + \Ft(0,1,0) \int_{\brid\Es\cap(B\times\Ry)}\abs{\nu_y^{\Es}}\,d\Hn\\
&\quad =\liminf_{j\to\infty}\int_B \Ft(\nabla_x L_j,0,\partial_t L_j)\,dx\,dt
+\Ft(0,1,0)\abs{D_y\chi_{\Es}}(B\times\Ry)\\
&\quad =\liminf_{j\to\infty}\int_B \Ft\left(\frac{\nabla_x L_j}{\abs{\nabla L_j}},0,\frac{\partial_t L_j}{\abs{\nabla
L_j}}\right) \abs{\nabla L_j}\,dx\,dt
+\Ft(0,1,0)\abs{D_y\chi_{\Es}}(B\times\Ry)\,.
 \end{split}
\end{equation}
Since $\nabla L_j\,\Hn\weakto DL$ weakly* and \eqref{eq:pr_lem_F1-1} holds, we can apply Reshetnyak's continuity Theorem
(see, e.g., \cite{AFP}*{Theorem 2.39}) and get
\begin{equation}\label{eq:pr_lem_F1-6}
\liminf_{j\to\infty}\int_B \Ft\left(\frac{\nabla_x L_j}{\abs{\nabla L_j}},0,\frac{\partial_t L_j}{\abs{\nabla
L_j}}\right)
\abs{\nabla L_j}\,dx\,dt
= \int_B \Ft\left(\frac{D_x L}{\abs{D L}},0,\frac{D_t L}{\abs{D L}}\right)
\,d\abs{DL}\,.
\end{equation}
Then, inequality \eqref{eq:F-ineq-1} follows combining \eqref{eq:pr_lem_F1-5} and \eqref{eq:pr_lem_F1-6}.

\step{2} Let us remove the additional assumption made in Step 1. Since $F$ is a convex function
satisfying \eqref{eq:F-hom} and \eqref{eq:F-radial}, we see that there exists a sequence
$\{(a_j,b_j,c_j)\}\subset\Rnk\times\R\times\R$ such that
\[
 F(\xi)=  \sup_{j\in\N}\,\bigl\{(\xi_x\cdot a_j+\abs{\xi_y} b_j+\xi_t c_j)^+\bigr\}\,,
\]
for every $\xi\in\R^{n+1}$. Define, for $N\in \N$,
\[
 F_N(\xi):=\sup_{1\leq j\leq N} \bigl\{(\xi_x\cdot a_j+\abs{\xi_y} b_j+\xi_t c_j)^+\bigr\}\,.
\]
Note that $F_N$ is a continuous function and satisfies \eqref{eq:F-hom} and \eqref{eq:F-radial}. Since
$F_N(\xi)\nearrow F(\xi)$ pointwise monotonically, inequality 
\eqref{eq:F-ineq-1} follows applying Step 1 to the functions $F_N$ and using the monotone convergence theorem.
\end{proof}

The following lemma gives informations on the gradient of the function $L$. It is a simple
variant of \cite{CCF}*{Lemmata 3.1 and 3.2}.
\begin{lemma}\label{lemma:DL}
Let $U\subset\Rnk\times\Rt$ be an open set and let $E$ be a set of finite perimeter in $U\times\Ry$ such that
$\Ha^{n+1}(E\cap(U\times\Ry))<+\infty$. Then $L\in BV(U)$ and for any bounded Borel function g in $U$ 
\begin{equation}\label{eq:lemma_DL}
\int_{U} g(x)\,dD_i L(x)=\int_{U\times\Ry}g(x)\,dD_i\, \chi_E(x,y)\,,\quad \text{\ for\ }i=1,\dotsc,n-k,t\,.
\end{equation}
\end{lemma}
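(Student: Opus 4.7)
The plan is to test the distributional derivative $D_i L$ against smooth functions, convert the resulting integrals into integrals over $U\times\Ry$ via Fubini, and use the finite perimeter assumption on $E$ to perform an integration by parts. The only subtlety is that test functions on $U$, viewed as functions on $U\times\Ry$ that are constant in $y$, are not compactly supported in the $y$-direction, which forces us to introduce a cutoff in $\Ry$ and pass to the limit; this is precisely where the finite measure hypothesis $\Ha^{n+1}(E\cap(U\times\Ry))<+\infty$ enters.

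Concretely, fix $i\in\{1,\dotsc,n-k,t\}$ and $\varphi\in C^1_c(U)$. First I would observe that $\int_U L\,d(x,t)=\Ha^{n+1}(E\cap(U\times\Ry))<+\infty$, so $L\in L^1(U)$. By Fubini
\[
 \int_U \partial_i\varphi(x,t)\,L(x,t)\,d(x,t)
 =\int_{U\times\Ry} \partial_i\varphi(x,t)\,\chi_E(x,y,t)\,d(x,y,t).
\]
To integrate by parts on the right, choose $\eta_M\in C^1_c(\Ry)$ with $0\le\eta_M\le 1$ and $\eta_M\equiv 1$ on $B(0,M)$, so that $\varphi(x,t)\eta_M(y)$ is a legitimate test function in $C^1_c(U\times\Ry)$. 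Since $\partial_i(\varphi\eta_M)=\eta_M\,\partial_i\varphi$ for these indices $i$, the definition of $D_i\chi_E$ on $U\times\Ry$ yields
\[
 \int_{U\times\Ry}\eta_M\,\partial_i\varphi\,\chi_E\,d(x,y,t)
 =-\int_{U\times\Ry}\varphi(x,t)\,\eta_M(y)\,dD_i\chi_E.
\]
Letting $M\to\infty$, the left-hand side converges by dominated convergence (the dominating function $|\partial_i\varphi|\,\chi_E$ is in $L^1$ because $\chi_E\in L^1(U\times\Ry)$), and the right-hand side converges because $|D_i\chi_E|(U\times\Ry)\le P(E;U\times\Ry)<+\infty$.

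Combining these identities,
\[
 \int_U \partial_i\varphi\,L\,d(x,t)=-\int_{U\times\Ry}\varphi(x,t)\,dD_i\chi_E(x,y,t),
\]
which shows that the distributional derivative $D_iL$ is a Radon measure on $U$ with total variation bounded by $|D_i\chi_E|(U\times\Ry)$. Together with $L\in L^1(U)$, this gives $L\in BV(U)$ and also proves \eqref{eq:lemma_DL} for every $\varphi\in C_c(U)$ after a standard density argument.

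To extend \eqref{eq:lemma_DL} from continuous compactly supported functions to arbitrary bounded Borel functions $g$, I would follow the same Lusin-approximation scheme used in the proof of Lemma~\ref{lemma:derivate}. Given $\epsilon>0$, approximate $g$ uniformly in measure by $g_\epsilon\in C_c(U)$ with $\|g_\epsilon\|_\infty\le\|g\|_\infty$ using the finite Radon measure $\mu(B):=|D_iL|(B)+|D_i\chi_E|(B\times\Ry)$; the identity \eqref{eq:lemma_DL} holds for $g_\epsilon$, and the errors on both sides are bounded by $2\|g\|_\infty\,\epsilon$, so letting $\epsilon\to 0$ yields the claim. The main obstacle, as indicated above, is the cutoff-and-limit argument for the integration by parts; once that is in hand, the rest is routine measure-theoretic bookkeeping.
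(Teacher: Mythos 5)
Your proof is correct. The paper itself does not supply a proof of this lemma but defers to the cited Lemmata 3.1 and 3.2 of Chlebík--Cianchi--Fusco; the argument you give --- Fubini to rewrite $\int_U\partial_i\varphi\,L$ as an integral of $\partial_i\varphi\,\chi_E$ over $U\times\Ry$, a cutoff $\eta_M$ in $y$ (noting $\partial_i(\varphi\eta_M)=\eta_M\partial_i\varphi$ for $i\in\{1,\dotsc,n-k,t\}$) to legitimize integration by parts against $D_i\chi_E$, dominated convergence as $M\to\infty$ using $\chi_E\in L^1(U\times\Ry)$ and $|D_i\chi_E|(U\times\Ry)<+\infty$, and then the Lusin step against the finite measure $\mu$ to pass from $C_c(U)$ to bounded Borel $g$ --- is the expected standard proof, and all the details check out.
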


\begin{lemma}\label{lemma:F-ineq-2}
 Let $U\subset\Rnk\times\Rt$ be an open set and let $F:\R^{n+1}\to [0,+\infty]$ be a convex function satisfying
\eqref{eq:F-hom}. Let $E$ be a set of finite perimeter in $U\times\Ry$ such that $\Ha^{n+1}(E\cap(U\times\Ry))<+\infty$.
Then
\begin{equation}\label{eq:F-ineq-2}
 \int_B \Ft \left( \frac{D_1 L}{\abs{DL}},\dotsc,\frac{D_{n-k} L}{\abs{DL}},0,\frac{D_t L}{\abs{DL}}\right)\,
d\abs{DL} \leq
\int_{\brid E \cap(B\times\Ry)}\Ft(\nu^E_1,\dotsc,\nu^E_{n-k},0,\nu^E_t)\, d\Hn
\end{equation}
for every Borel set $B\subset U$.
\end{lemma}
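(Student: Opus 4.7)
The plan is to exploit the convexity and positive $1$-homogeneity of $\Ft$ to express it as a pointwise supremum of linear functionals, and then to translate each linear term, via Lemma~\ref{lemma:DL}, into an integral over $\brid E$.

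Since $\Ft:\R^{n-k+2}\to[0,+\infty]$ is convex, positively $1$-homogeneous and vanishes at $0$, it coincides with the supremum of all the linear functions that it dominates. A standard separability argument yields a countable family $\{(b_j,d_j,c_j)\}_{j\in\N}\subset\Rnk\times\R\times\R$ such that $\Ft(\xi_x,r,\xi_t)=\sup_{j}(b_j\cdot\xi_x+d_j r+c_j\xi_t)$ for every $(\xi_x,r,\xi_t)\in\R^{n-k+2}$. Since in \eqref{eq:F-ineq-2} the function $\Ft$ is only ever evaluated at vectors with zero middle component, the numbers $d_j$ are inconsequential: setting $G(\xi_x,\xi_t):=\Ft(\xi_x,0,\xi_t)$ and $\tilde\ell_j(\xi_x,\xi_t):=b_j\cdot\xi_x+c_j\xi_t$, we have $G=\sup_j\tilde\ell_j$ on $\R^{n-k+1}$, with $G$ convex, positively $1$-homogeneous and non-negative.

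By the standard representation of integrals of convex $1$-homogeneous functions against vector-valued Radon measures (see, e.g., \cite{AFP}*{Proposition~1.23}),
\[
\int_B G\!\left(\frac{D_x L}{\abs{DL}},\frac{D_t L}{\abs{DL}}\right)d\abs{DL}
=\sup\sum_m\tilde\ell_{j_m}\!\bigl(D_x L(B_m),D_t L(B_m)\bigr),
\]
where the supremum is taken over all finite Borel partitions $\{B_m\}$ of $B$ and all choices of indices $j_m\in\N$. For any such partition and choice, Lemma~\ref{lemma:DL} applied componentwise, together with $D_i\chi_E=\nu^E_i\Hn\rest\brid E$, yields
\[
\tilde\ell_{j_m}\!\bigl(D_x L(B_m),D_t L(B_m)\bigr)
=\int_{\brid E\cap(B_m\times\Ry)}\bigl(b_{j_m}\cdot\nu^E_x+c_{j_m}\nu^E_t\bigr)d\Hn.
\]

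Finally, using the pointwise inequality $b_{j_m}\cdot\nu^E_x+c_{j_m}\nu^E_t\le G(\nu^E_x,\nu^E_t)=\Ft(\nu^E_x,0,\nu^E_t)$ and summing over the disjoint family $\{B_m\times\Ry\}$, we get
\[
\sum_m\tilde\ell_{j_m}\!\bigl(D_x L(B_m),D_t L(B_m)\bigr)\le\int_{\brid E\cap(B\times\Ry)}\Ft(\nu^E_x,0,\nu^E_t)\,d\Hn,
\]
and passing to the supremum on the left-hand side gives \eqref{eq:F-ineq-2}. The only genuinely non-trivial ingredient is the representation formula for integrals of convex $1$-homogeneous integrands against vector measures; the rest is bookkeeping, with the geometric content entering precisely through Lemma~\ref{lemma:DL}, which transfers the information from $D\chi_E$ on $U\times\Ry$ to $DL$ on $U$ in the components tangential to $\Ry$.
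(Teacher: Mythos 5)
Your proof is correct and takes essentially the same approach as the paper: express $\Ft(\,\cdot\,,0,\,\cdot\,)$ as a countable supremum of linear functionals, dualize the left-hand integral as a supremum over Borel decompositions of $B$, transfer each term to an integral over $\brid E$ via Lemma~\ref{lemma:DL} and \eqref{eq:DChi}, and conclude with the pointwise comparison against $\Ft(\nu^E_x,0,\nu^E_t)$. The small variation is that the paper writes $\Ft(\xi_x,0,\xi_t)=\sup_j(\alpha_j\cdot\xi_{x,t})^+$ so that \cite{AFP}*{Lemma~2.35} (which requires non-negative integrands) applies, and therefore must restrict each $B_j$ to the subset $P_j$ where $\alpha_j\cdot\frac{DL}{\abs{DL}}\ge 0$ before transferring to $\brid E$; you instead use the sign-changing linear functionals $\tilde\ell_j$ directly, which streamlines the bookkeeping but requires a representation of $\int_B G\bigl(\frac{d\mu}{d\abs{\mu}}\bigr)\,d\abs{\mu}$ as $\sup\sum_m\tilde\ell_{j_m}\bigl(\mu(B_m)\bigr)$ that is not literally Lemma~2.35. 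That formula is true --- it follows from Lemma~2.35 applied to $(\tilde\ell_j)^+$ together with the identity $\int_{A_j}(\tilde\ell_j)^+\,d\abs{\mu}=\tilde\ell_j\bigl(\mu(A_j\cap\{\tilde\ell_j\ge 0\})\bigr)$, or directly by a measurable selection argument using $\abs{DL}(B)<+\infty$ --- but you should either include that short derivation or cite a reference that states it in the form you need; I doubt the \cite{AFP}*{Proposition~1.23} you point to is the relevant statement.
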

\begin{proof}
 As in the previous proof, we can assume that $B$ is a bounded open set. Since $F$ is a non-negative convex function
satisfying \eqref{eq:F-hom}, there exists a sequence of vectors $\{\alpha_j\}\in\Rnk\times\Rt$
such that
\begin{equation}\label{eq:bah}
 F(\xi_x,0,\xi_t)=\sup_{j\in\N}\left\{(\alpha_j\cdot \xi_{x,t})^+\right\}
\end{equation}
for every $\xi\in\R^{n+1}$, where $\xi_{x,t}=(\xi_x,\xi_t)\in\R^{n-k+1}$. Hence we deduce that (see, e.g.,
\cite{AFP}*{Lemma 2.35})
\begin{equation}\label{eq:pr_F_ineq_2-2}
 \int_B \Ft\left(\frac{D_x L}{\abs{DL}},0,\frac{D_t L}{\abs{DL}}\right)\,d\abs{D L}
 =\sup\biggl\{\sum_{j\in J}\int_{B_j} \Bigl(\alpha_j\cdot\frac{DL}{\abs{DL}}\Bigr)^+\, d\abs{DL}\biggr\}\,,
\end{equation}
where the supremum is extended over all finite sets $J\subset\N$ and all families $\{B_j\}_{j\in J}$ of pairwise
disjoint Borel subsets of $B$. For a fixed family $\{B_j\}_{j\in J}$ and a fixed $j\in\N$ let us define
\begin{equation*}
 P_j:=\biggl\{(x,t)\in B_j : \alpha_j\cdot \frac{DL}{\abs{DL}}(x,t)\geq 0\biggr\}\,.
\end{equation*}
Hence, on applying \eqref{eq:lemma_DL}, we get
  \[\begin{split}
 \int_{B_j} \Bigl(\alpha_j\cdot \frac{DL}{\abs{DL}}\Bigr)^+d\abs{DL}
 &=\int_U \chi_{P_j}(x,t)\biggl(\sum_{i=1}^{n-k}(\alpha_j)_i\frac{D_i L}{\abs{DL}}+(\alpha_j)_t \frac{D_t
L}{\abs{DL}}\biggr)\,d\abs{DL}\\
&\nqquad\nquad = \sum_{i=1}^{n-k}\int_U (\alpha_j)_i\, \chi_{P_j}(x,t)\,dD_i L(x,t)
+\int_U \!\!(\alpha_j)_t\, \chi_{P_j}(x,t)\,dD_t L(x,t)\\
&\nqquad\nquad =\sum_{i=1}^{n-k} \int_{U\times\Ry}(\alpha_j)_i\,\chi_{(P_j\times\Ry)}(x,y,t)\,dD_i \chi_E
+ \int_{U\times\Ry}\!\!(\alpha_j)_t\,\chi_{(P_j\times\Ry)}(x,y,t)\,dD_t \chi_E\,.
\end{split}\]
Combining the last equality with \eqref{eq:DChi} we have
\[
 \int_{B_j}\Bigl(\alpha_j\cdot\frac{DL}{\abs{DL}}\Bigr)^+\,d\abs{DL} =
 \int_{\brid E} \chi_{(P_j\times\Ry)} \alpha_j\cdot\nu_{x,t}^E\,d\Hn
 \leq \int_{\brid E}
 \chi_{(B_j\times\Ry)}\bigl(\alpha_j\cdot\nu_{x,t}^E\bigr)^+\,d\Hn\,.
\]
Hence, on using \eqref{eq:bah} we see that
\[
 \begin{split}
 \sum_{j\in J}\int_{B_j} \bigl(\alpha_j\cdot \frac{DL}{\abs{DL}}\bigr)^+d\abs{DL}
 \leq \sum_{j\in J}\int_{\brid E \cap(B_j\times\Ry)} \!\!\!\Ft(\nu_x^E,0,\nu^E_t)\,d\Hn
 \leq\int_{\brid E\cap(B\times\Ry)}\!\!\! \Ft(\nu_x^E,0,\nu_t^E)\,d\Hn\,.
\end{split}\]
Then, combining \eqref{eq:pr_F_ineq_2-2} and the last inequality, we get \eqref{eq:F-ineq-2}.
\end{proof}

\begin{lemma}\label{lemma:L_loc_fin}
Let $U\subset\Rnk\times\Rt$ be an open set and let $E$ be a set of finite perimeter in $U\times\Ry$ such that 
$L(x,t)<+\infty$ for $\Hn\aev(x,t)\in U$. Then, for every open set $U'\Subset U$ 
\begin{equation}\label{eq:L_loc_fin}
 \Ha^{n+1}(E\cap(U'\times\Ry))<+\infty\,.
\end{equation}
\end{lemma}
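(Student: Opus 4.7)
The plan is to truncate $E$ in the $y$-direction by balls of radius $R$, apply Lemma~\ref{lemma:DL} to control the truncated section measure uniformly in $R$, and then conclude by monotone convergence. After replacing $U'$ by a slightly larger Lipschitz domain (allowed, since the conclusion is monotone in $U'$), assume $U'$ has Lipschitz boundary, and fix a bounded open set $U''$ with $U' \Subset U'' \Subset U$. For each $R>0$ put
\[
 \tilde E_R := E \cap (U'' \times B(0,R) \times \Rt), \qquad L^{(R)}(x,t):=\Ha^k\bigl(E_{x,t}\cap B(0,R)\bigr).
\]
Since $\chi_{\tilde E_R}=\chi_E\cdot\chi_{B(0,R)}$ on $U''\times\Ry$, the set $\tilde E_R$ has finite perimeter in $U''\times\Ry$ and finite Lebesgue measure (bounded by $|U''|\omega_k R^k$), so Lemma~\ref{lemma:DL} applies and yields $L^{(R)}\in BV(U'')$ together with $\int_{U''} g\,dD_i L^{(R)}=\int_{U''\times\Ry} g\,dD_i \chi_{\tilde E_R}$ for bounded Borel $g$ and $i=1,\dotsc,n-k,t$.

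The key point is that $\chi_{B(0,R)}$ depends only on $y$, so for $i\in\{1,\dotsc,n-k,t\}$ one has $D_i \chi_{\tilde E_R}=\chi_{B(0,R)}\, D_i\chi_E$ as measures on $U''\times\Ry$. Testing against $g=\chi_{U'}$ gives
\[
 |D_i L^{(R)}|(U')\leq |D_i\chi_E|(U'\times B(0,R))\leq P(E;U'\times\Ry),
\]
uniformly in $R$: the spherical cap $U''\times\partial B(0,R)$ introduced by the truncation has outer normal purely in the $y$-direction and hence contributes nothing to these components. Summing over $i$ produces a uniform bound $|DL^{(R)}|(U')\leq K$. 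The $BV$ Sobolev--Poincaré inequality on the Lipschitz domain $U'\subset\R^{n-k+1}$ then yields $\|L^{(R)}-c_R\|_{L^{p^*}(U')}\leq K'$ uniformly in $R$, where $c_R:=|U'|^{-1}\int_{U'}L^{(R)}$ and $p^*:=(n-k+1)/(n-k)$.

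At this stage the hypothesis that $L$ is finite almost everywhere enters decisively. Choose $M$ large enough that simultaneously $|\{L\leq M\}\cap U'|\geq |U'|/2$ and $M>K'(|U'|/2)^{-1/p^*}$. If $c_R>2M$, then on $\{L\leq M\}\cap U'$ one has $L^{(R)}\leq L\leq M$, whence $|L^{(R)}-c_R|>M$, and Chebyshev's inequality produces the contradiction $|U'|/2\leq (K'/M)^{p^*}<|U'|/2$. Therefore $c_R\leq 2M$ and $\int_{U'}L^{(R)}\leq 2M|U'|$ for every $R$. Since $L^{(R)}\nearrow L$ pointwise as $R\to\infty$, monotone convergence gives
\[
 \Ha^{n+1}(E\cap(U'\times\Ry))=\int_{U'}L\,dx\,dt\leq 2M|U'|<+\infty.
\]

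The crux of the argument is the uniform $BV$-estimate in the second paragraph: one must truncate in a way that the new boundary does not pollute the $(x,t)$-components of $DL^{(R)}$, which is precisely why a spherical truncation in $y$ succeeds while a truncation in the $(x,t)$-variables would introduce uncontrolled trace terms.
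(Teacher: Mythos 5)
Your proof is correct, and it shares the overall skeleton of the paper's argument: truncate $E$ in the $y$-direction, obtain a uniform $BV$-bound on the truncated section function, apply the $BV$ Poincaré inequality, and conclude from the almost-everywhere finiteness of $L$. Where you genuinely diverge is in how the uniform bound on $|DL^{(R)}|(U')$ is produced. The paper sets $E_h=E\cap(U'\times B(0,h))$ and routes through the perimeter Pólya--Szegő inequality $P\bigl((E_h)^\sigma;U'\times\Ry\bigr)\leq P(E_h;U'\times\Ry)$ (Step~1 of Theorem~\ref{thm:F-ineq}), coupled with Lemma~\ref{lemma:DL}, to control $|DL_h|(U')$. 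You instead apply Lemma~\ref{lemma:DL} directly to the truncated set $\tilde E_R$ and exploit the elementary but decisive observation that the new boundary created by the spherical truncation in $y$ (including the possible coincidence set $\brid E\cap\{|y|=R\}$ where $\nu^E$ is forced to equal the radial $y$-normal) has vanishing $(x,t)$-components of the normal; hence $|D_i\chi_{\tilde E_R}|(U'\times\Ry)\leq |D_i\chi_E|(U'\times\Ry)$ for $i=1,\dots,n-k,t$, with no Pólya--Szegő step and, in fact, no dependence on $R$ being generic. This makes the uniform estimate transparent, whereas the paper's claimed uniform bound $P(E_h;U'\times\Ry)\leq C$ tacitly relies on the same cancellation (the $\{|y|=h\}$ piece of the boundary could otherwise grow with $h$), so your route is arguably more economical and more explicit at precisely the point where care is needed. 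The closing step also differs cosmetically --- you run a Chebyshev argument on the mean values $c_R$ using an $L^{p^*}$ Poincaré inequality, while the paper extracts a convergent subsequence of $L^1$-means and applies Fatou --- but these are interchangeable. One small wording issue: the identity $D_i\chi_{\tilde E_R}=\chi_{B(0,R)}\,D_i\chi_E$ should really be stated as the inequality $|D_i\chi_{\tilde E_R}|\leq|D_i\chi_E|$ for $i\neq y$-components, which is all you use and which holds for every $R$ by the normal-direction argument just described.
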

\begin{proof}
 Given an open set $U'\Subset U$ define
 \[E_h=E\cap \bigl(U'\times B(0,h)\bigr)\,\text{\ for\ }h\in\N\,.\]
 Without loss of generality, let us assume that $\partial U'$ is smooth. Since $E_h$ has finite perimeter in
$U'\times\Ry$, then by \eqref{eq:measure_bdry} we see that
\begin{equation}\label{eq:pr_L_locf1}
 \measbdry E_h\cap(U'\times\Ry)\subset\left(\measbdry E\cup\{\abs{y}=h\}\right)\cap(U'\times\Ry)\,.
\end{equation}
Since $\Ha^{n+1}(E_h\cap(U'\times\Ry))<+\infty$, arguing as in the proof of Lemma \ref{lemma:F-ineq-1}
and using \eqref{eq:pr_L_locf1}, \eqref{eq:DChi} and \eqref{AZZ_(3.5)} we deduce that
\begin{equation*}
 P\bigl((E_h)^\sigma;U'\times\Ry\bigr)\leq P\bigl(E_h;U'\times\Ry\bigr)\leq C\,,
\end{equation*}
for some constant $C$ depending only on $U'$.
Define  $m_h=\dashint_{U'} L_h(x,t)\,dx\,dt$, where $L_h(x,t)$ stands for $\Ha^{n-k+1}((E_h)_{x,t})$. Using the Poincaré
inequality for functions of bounded variations (see, e.g., \cite{AFP}*{Theorem 3.44}) we have that
\begin{equation}\label{eq:pr_L_locf3}
 \int_{U'}\abs{L_h(x,t)-m_h}\,dx\,dt\leq C\,\abs{DL_h}(U')\leq C\, P\bigl((E_h^\sigma);U'\times\Ry\bigr)\\
 \leq C\,,
\end{equation}
for some constant $C$ depending only on $U'$. Up to subsequences, we have that $m_h\to m$ for some $m\in [0,+\infty]$. 
As $L_h(x,t)\to L(x,t)$ for $\Hn\aev(x,t)\in U'$, using \eqref{eq:pr_L_locf3} and Fatou's Lemma we infer that
\[
 \int_{U'}\abs{L(x,t)-m}\,dx\,dt\leq C\,.
\]
Since $L(x,t)$ is finite for $\Hn\aev(x,t)\in U'$, the last equation gives $m<+\infty$ and $L(x,t)\in L^1(U')$.
Hence, \eqref{eq:L_loc_fin} follows.
\end{proof}

\begin{theorem}\label{thm:F-ineq}
 Let $F:\R^{n+1}\to [0,+\infty]$ be a convex function satisfying \eqref{eq:F-hom} and \eqref{eq:F-radial}. Let
$U\subset \R^{n-k}\times\Rt$ be an open set and let $E$ be a set of finite perimeter in $U\times\Ry$ such that
$L(x,t)<+\infty$ $\Ha^{n-k+1}\aev\text{in\ }U$. Then
\begin{equation}\label{eq:F-ineq}
 \int_{\brid\Es\cap(B\times\Ry)} F(\nu^{\Es})\,d\Hn\leq \int_{\brid E\cap(B\times\Ry)} F(\nu^E)\,d\Hn
\end{equation}
for every Borel set $B\subset U$. In particular, if $E$ is a set of finite perimeter in $\R^{n+1}$, then
 \begin{equation}\label{eq:F-ineq-bis}
\int_{\brid\Es} F(\nu^{\Es})\,d\Hn\leq \int_{\brid E} F(\nu^E)\,d\Hn\,.
 \end{equation}
\end{theorem}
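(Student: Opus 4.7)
The strategy is to combine Lemmas~\ref{lemma:F-ineq-1}, \ref{lemma:F-ineq-2}, and~\ref{lemma:L_loc_fin} with the classical isoperimetric inequality in $\Rk$, via Vol'pert's theorem. First, Lemma~\ref{lemma:L_loc_fin} upgrades the pointwise finiteness of $L$ on $U$ to $\Ha^{n+1}(E \cap (U' \times \Ry)) < \infty$ on every open $U' \Subset U$, so that both $E$ and $\Es$ satisfy the hypotheses of Lemmas~\ref{lemma:F-ineq-1} and~\ref{lemma:F-ineq-2} on such $U'$. Covering $U$ by a nested exhaustion and using monotone convergence on both sides of~\eqref{eq:F-ineq} reduces the proof to the case where $B$ is a Borel subset of some bounded $U' \Subset U$.

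On such a $B$, Lemma~\ref{lemma:F-ineq-1} provides the upper bound
\[
\int_{\brid\Es\cap(B\times\Ry)} F(\nu^{\Es})\,d\Hn \leq \int_B \Ft\!\left(\tfrac{D_xL}{\abs{DL}},0,\tfrac{D_tL}{\abs{DL}}\right)\,d\abs{DL} + \Ft(0,\dots,0,1,0)\,\abs{D_y\chi_{\Es}}(B\times\Ry),
\]
and Lemma~\ref{lemma:F-ineq-2} estimates the $\abs{DL}$-integral from above by $\int_{\brid E\cap(B\times\Ry)}\Ft(\nu_x^E,0,\nu_t^E)\,d\Hn$. For the $\abs{D_y\chi_{\Es}}$-summand, Vol'pert's Theorem~\ref{thm:volpert} combined with the coarea formula gives $\abs{D_y\chi_{\Es}}(B\times\Ry) = \int_B \Ha^{k-1}(\brid(\Es)_{x,t})\,dx\,dt$, and since $(\Es)_{x,t}$ is a ball in $\Rk$ of the same $k$-volume as $E_{x,t}$, the isoperimetric inequality in $\Rk$ yields $\Ha^{k-1}(\brid(\Es)_{x,t}) \leq \Ha^{k-1}(\brid E_{x,t})$. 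Integrating, $\abs{D_y\chi_{\Es}}(B\times\Ry) \leq \abs{D_y\chi_E}(B\times\Ry) = \int_{\brid E\cap(B\times\Ry)}\abs{\nu_y^E}\,d\Hn$, and the radiality and $1$-homogeneity of $\Ft$ turn $\Ft(0,\dots,0,1,0)\,\abs{D_y\chi_E}(B\times\Ry)$ into $\int_{\brid E\cap(B\times\Ry)}\Ft(0,\nu_y^E,0)\,d\Hn$.

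Putting these together yields the intermediate bound
\[
\int_{\brid\Es\cap(B\times\Ry)} F(\nu^{\Es})\,d\Hn \leq \int_{\brid E\cap(B\times\Ry)}\bigl[\Ft(\nu_x^E,0,\nu_t^E)+\Ft(0,\nu_y^E,0)\bigr]\,d\Hn,
\]
and the principal difficulty is closing the gap to $\int_{\brid E\cap(B\times\Ry)} F(\nu^E)\,d\Hn$, since pointwise subadditivity of $F$ only gives $F(\nu^E) \leq \Ft(\nu_x^E,0,\nu_t^E)+\Ft(0,\nu_y^E,0)$, the reverse of what is needed. I would overcome this by exploiting the convexity of $F$ at the level of the component measures: writing $F=\sup_N F_N$ as a monotone supremum of finite maxima of positive affine truncations $(\alpha\cdot\xi_x+\beta\abs{\xi_y}+\gamma\xi_t)^+$ (vanishing at $0$ and radial in $y$), proving~\eqref{eq:F-ineq} for each $F_N$ by direct manipulation of the measures $D_i\chi_E$ (where the affine structure makes the argument sharp, in the spirit of Step~2 in the proof of Lemma~\ref{lemma:F-ineq-1}), and then passing to the supremum by monotone convergence. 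Finally, \eqref{eq:F-ineq-bis} follows by taking $U=B=\Rnk\times\Rt$ in~\eqref{eq:F-ineq}.
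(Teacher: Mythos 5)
Your proposal has a genuine gap, and you have in fact correctly diagnosed it yourself: chaining Lemma~\ref{lemma:F-ineq-1}, Lemma~\ref{lemma:F-ineq-2}, Vol'pert's theorem and the isoperimetric inequality over the \emph{entire} Borel set $B$ can only produce the bound
\[
\int_{\brid\Es\cap(B\times\Ry)} F(\nu^{\Es})\,d\Hn \leq \int_{\brid E\cap(B\times\Ry)}\bigl[\Ft(\nu_x^E,0,\nu_t^E)+\Ft(0,\nu_y^E,0)\bigr]\,d\Hn\,,
\]
and since $F$ is convex, $1$-homogeneous and vanishes at $0$, it is subadditive, so $F(\nu^E) \leq \Ft(\nu_x^E,0,\nu_t^E)+\Ft(0,\nu_y^E,0)$ pointwise --- the inequality goes the wrong way and the gap cannot be closed from that side. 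Your proposed remedy (representing $F$ as a monotone supremum of positive affine truncations $(\alpha\cdot\xi_x+\beta\abs{\xi_y}+\gamma\xi_t)^+$) does not repair this: the truncation $(\,\cdot\,)^+$ is indispensable to keep each piece non-negative, and precisely because of it each piece is again \emph{strictly} subadditive in the same way, so the same loss reappears for every $F_N$. Nothing in "the affine structure" makes the argument sharp; the nonlinearity is in the positive part, not in $F$ minus its truncation.

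The paper's proof sidesteps this by \emph{not} applying the two lemmas on all of $B$. It splits $B = B_1 \cup B_2$ with $B_1 = B\setminus G_{\Es}$ and $B_2 = B\cap G_{\Es}$, where $G_{\Es}$ is the Vol'pert good set for $\Es$. On $B_1$ the essential projection of $\Es$ has $\Ha^{n-k+1}$-measure zero, so the offending correction term $\Ft(0,1,0)\,\abs{D_y\chi_{\Es}}(B_1\times\Ry)$ vanishes identically by the coarea formula, and Lemmas~\ref{lemma:F-ineq-1} and~\ref{lemma:F-ineq-2} together with the monotonicity of $\Ft$ in its middle argument suffice. On $B_2$, where $\nu_y^{\Es}\neq 0$ everywhere on the relevant boundaries, the lemmas are abandoned entirely: the coarea formula, the explicit formula \eqref{eq:lemma_partialLEs} for $\nabla L$, the isoperimetric inequality, and --- crucially --- \emph{Jensen's inequality} applied via \eqref{eq:formula_deriv} close the chain directly against $\int_{\brid E\cap(B_2\times\Ry)} F(\nu^E)\,d\Hn$. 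Jensen's inequality, which is the engine that converts the averaged data on $\brid E_{x,t}$ into a pointwise convex integrand, is absent from your proposal; together with the $B_1/B_2$ decomposition, that is the missing idea.
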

\begin{proof}

\step{1}
Let us first assume that $\Ha^{n+1}(E\cap(U\times\Ry))<+\infty$.
Let $G_{\Es}$ be the set given by Vol'pert's Theorem \ref{thm:volpert}. For any
Borel set $B\subset U$ define $B_1=B\setminus G_{\Es}$ and $B_2=B\cap G_{\Es}$.

By inequalities \eqref{eq:F-ineq-1} and \eqref{eq:F-ineq-2} we see that
\begin{equation}\label{eq:pr_F_ineq-1}
 \int_{\brid \Es\cap(B_1\times\Ry)} F(\nu^{\Es})\,d\Hn
 \leq \int_{\brid \Es\cap(B_1\times\Ry)} F(\nu^E)\,d\Hn
 + \Ft(0,1,0)\abs{D_y\chi_{\Es}}(B_1\times\Ry)\,.
\end{equation}
Moreover, by \eqref{eq:DChi}, coarea formula \eqref{eq:coarea} and (ii) of Theorem \ref{thm:volpert} we get
\begin{equation}\label{eq:pr_F_ineq-2}
 \abs{D_y \chi_{\Es}}(B_1\times\Ry)
 =\int_{\brid \Es\cap(B_1\times\Ry)} \abs{\nu_y^{\Es}}\,d\Hn
  =\int_{B_1}\Ha^{k-1}\bigl(\brid \Es_{x,t}\bigr)\,dx\,dt
 =0\,,
\end{equation}
where the last equality holds since $\Hn(\esspjt(E)\cap B_1)=0$. Hence, \eqref{eq:pr_F_ineq-1} and
\eqref{eq:pr_F_ineq-2} give
\begin{equation}\label{eq:pr_F_ineqB1}
 \int_{\brid\Es\cap(B_1\times\Ry)} F(\nu^{\Es})\,d\Hn\leq \int_{\brid E\cap(B_1\times\Ry)} F(\nu^E)\,d\Hn\,.
\end{equation}

For all $(x,t)\in B_2$, we have $\nu_y^{\Es}\neq 0\,\,\,\Ha^{k-1}\aev$on $\partial \Es_{x,t}$. Hence, since $\Es_{x,t}$
is a ball, we get that indeed $\nu^{\Es}_y\neq 0$ at all point on $\partial \Es_{x,t}$. Therefore, $\nu^{\Es}_y\neq 0$
for all point on $\brid \Es\cap (B_2\times\Ry)$ and we can apply the coarea formula, thus getting
\begin{equation}\label{eq:catena1}\begin{aligned}
\int_{\brid\Es\cap(B_2\times\Ry)} & F(\nu^{\Es})\,d\Hn && \\
&\nqquad\nqquad\nqquad=\int_{\brid\Es\cap(B_2\times\Ry)}
\Ft\left(\frac{\nu^{\Es}}{\abs{\nu_y^{\Es}}}\right)\abs{\nu_y^{\Es}}\,d\Hn
&&\text{by \eqref{eq:F-hom} and \eqref{eq:F-radial}}\\
&\nqquad\nqquad\nqquad=\int_{B_2}dx\,dt\int_{\brid(\Es)_{x,t}}\Ft\left(\frac{\nu_x^{\Es}}{\abs{\nu_y^{\Es}}},1,
\frac{\nu_t^{\Es}}{\abs{\nu_y^{\Es}}}\right)d\Ha^{k-1}(y)&& \text{by \eqref{eq:coarea}}\\
&\nqquad\nqquad\nqquad=\int_{B_2}\Ft\left(\nabla_x L(x,t),\Ha^{k-1}(\brid \Es_{x,t}),\partial_t L(x,t)\right)dx\,dt
&&\text{by \eqref{eq:lemma_partialLEs}.}\\
&\nqquad\nqquad\nqquad\leq 
\int_{B_2}\Ft\left(\nabla_x L(x,t),\Ha^{k-1}(\brid E_{x,t}),\partial_t L(x,t)\right)dx\,dt
&&\text{by the isoperimetric inequality}\,.
\end{aligned}
\end{equation}

Since $F$ is a non-negative convex function satisfying \eqref{eq:F-hom} and \eqref{eq:F-radial}, we see that there
exists a sequence of vectors $\{(\xi_h,\rho_h,\tau_h)\}\subset\Rnk\times\R\times\R$ such that
\[
 \Ft(x,r,t)=\sup_{h\in\N}\bigl\{(x\cdot \xi_h+r\rho_h+t\tau_h)^+\bigr\}\,.
\]
Hence, we deduce that (see, e.g., \cite{AFP}*{Lemma 2.35})
\begin{equation*}
 \int_{B_2}\!\!\Ft\left(\nabla_x L(x,t),\Ha^{k-1}(\brid E_{x,t}),\partial_t L(x,t)\right)dx\,dt=
 \sup\Biggl\{\sum_{h\in H}\int_{A_h}\!\bigl(\nabla_x L\cdot \xi_h + p(x,t)\, \rho_h +\partial_t L\,\tau_h
 \bigr)^+\Biggr\}\,,
\end{equation*}
where $p(x,t):=\Ha^{k-1}(\brid E_{x,t})$ and the supremum is extended over all finite sets $H\subset\N$ and all families
$\{A_h\}_{h\in H}$ of pairwise disjoint Borel subsets of $B_2$.  For a fixed family $\{A_h\}_{h\in H}$ and a fixed
$h\in\N$, define
\[
 P_h:=\bigl\{(x,t)\in A_h : \nabla_x L(x,t)\cdot \xi_h + p(x,t)\, \rho_h +\partial_t L(x,t)\,\tau_h \geq 0\bigr\}\,.
\]
Let us define 
\[g(x,t):=\int_{\brid E_{x,t}}\frac{\nu^E_{x,t}(x,y,t)}{\abs{\nu_y^E(x,y,t)}} d\Ha^{k-1}(y)\,.
\]
From \eqref{eq:formula_deriv} and 
considering that $DL$ is absolutely continuous on $B_2$, setting $\Ath:=A_h\cap P_h$, we have
\begin{equation}\label{eq:catena2}\begin{split}
 &\sum_{h\in H}\int_{A_h}\bigl(\nabla_x L(x,t)\cdot \xi_h+ p(x,t)\rho_h + 
 \partial_t L(x,t)\tau_h\bigr)^+ \\
 & =\sum_{h\in H}\int_{\Ath} \nabla_x L(x,t)\cdot \xi_h+ p(x,t)\rho_h + 
 \partial_t L(x,t)\tau_h \\
 &=\sum_{h\in H} \Biggl[
 \int_{\brid E\cap(\Ath\times\Rk)\cap\{\nu_y^E=0\}}(\xi_h,\tau_h)\cdot \nu^E_{x,t}(x,y,t)\,d\Hn+
 \int_{\Ath} g(x,t)\cdot (\xi_h,\tau_h)+p(x,t)\rho_h \,dx\,dt\Biggr]\\
 &\leq \sum_{h\in H}\Biggl[
 \int_{\brid E\cap(\Ath\times\Rk)\cap\{\nu_y^E=0\}} \Ft(\nu^E_x,0,\nu^E_t)\,d\Hn\\
 &\phantom{\leq \sum_{h\in H}\}}\,+
 \int_{\Ath}
 \Ft\left(
\int_{\brid E_{x,t}} \frac{\nu_x^E}{\abs{\nu_y^E}}\,d\Ha^{k-1},
\int_{\brid E_{x,t}}d\Ha^{k-1},
\int_{\brid E_{x,t}} \frac{\nu_t^E}{\abs{\nu_y^E}}\,d\Ha^{k-1}\right)dx\,dt\Biggr]\\
&\leq \sum_{h\in H}\Biggl[
\int_{\brid E\cap(A_h\times\Rk)\cap\{\nu_y^E=0\}} F(\nu^E)\,d\Hn
+ \int_{A_h}dx\,dt\int_{\brid E_{x,t}}
\Ft\left(\frac{\nu_x^E}{\abs{\nu_y^E}},1,\frac{\nu_t^E}{\abs{\nu_y^E}}\right) d\Ha^{k-1}(y)\Biggr]
=:\mathcal{J}
\,,
\end{split}\end{equation}
where the last inequality is due to Jensen's inequality. On applying the coarea formula, we
see that 
\begin{equation}\label{eq:catena3}\begin{split}
\mathcal{J}&=\sum_{h\in H}\Biggl[
\int_{\brid E\cap(\Ath\times\Rk)\cap\{\nu_y^E=0\}} F(\nu^E)\,d\Hn
+ \int_{\brid E\cap (\Ath\times\Rk)\cap\{\nu^E_y\neq 0\}}
F(\nu)\,d\Hn\Biggr]\\
&\leq \sum_{h\in H}\Biggl[
\int_{\brid E\cap(A_h\times\Rk)\cap\{\nu_y^E=0\}} F(\nu^E)\,d\Hn
+ \int_{\brid E\cap (A_h\times\Rk)\cap\{\nu^E_y\neq 0\}}
F(\nu)\,d\Hn\Biggr]\\
&=\int_{\brid E\cap(B_2\times\Rk)}F(\nu)\,d\Hn\,.
\end{split}\end{equation}

Now
inequality \eqref{eq:F-ineq} follows combining
\eqref{eq:pr_F_ineqB1}$-$\eqref{eq:catena3}.

\step{2} If the set $E$ is such that $L(x,t)<+\infty$ for $\Ha^{n-k+1}\aev (x,t)\in U$, then \eqref{eq:F-ineq} follows 
from Step 1 and from Lemma \ref{lemma:L_loc_fin}. 

\step{3} It remains to prove \eqref{eq:F-ineq-bis}. If $E$ has finite perimeter in $\R^{n+1}$, then the isoperimetric
inequality
(see, e.g., \cite{AFP}*{Theorem 3.46}) assures that either $E$ or $\R^{n+1}\setminus E$ has finite measure. In the first
case \eqref{eq:F-ineq-bis} is proven by the above calculations taking $U=\R^{n-k+1}$. In the second one,
\eqref{eq:F-ineq-bis} trivially holds, since $\Es$ is equivalent to $\R^{n+1}$ and so $\brid \Es=\emptyset$. 
\end{proof}

In order to prove Theorem \ref{thm:equal_bv} we need some results for the equality cases in \eqref{eq:F-ineq} and
\eqref{eq:F-ineq-bis}. For this, we
need to strengthen the assumptions. Namely, we require that for every $(x,t)\in\R^{n-k+1}$ and for every
$s_1,s_2\in\R^+$ with $s_1<s_2$,
\begin{equation}\label{eq:Fincr}
 \Ft(x,s_1,t)<\Ft(x,s_2,t)\,,
\end{equation}
whenever the right-hand side is finite.
\begin{proposition}\label{thm:equal_F}
 Let $F:\R^{n+1}\to[0,+\infty]$ be a convex function satisfying \eqref{eq:F-hom}, \eqref{eq:F-radial} and
\eqref{eq:Fincr} and let
$U\subset \Rnk\times\Rt$ be an open set. Let $E$ be a set of finite perimeter in $U\times\Ry$ such that $L(x,t)<+\infty$
$\Ha^{n-k+1}\aev\text{in}$ $U$. If
\begin{equation}\label{eq:equal_F}
 \int_{\brid E^\sigma\cap(U\times\Ry)} F(\nu^{E^\sigma})\,d\Hn=\int_{\brid E \cap(U\times\Ry)} F(\nu^E)\,d\Hn
 <\infty\,,
\end{equation}
then for $\Ha^{n-k+1}\aev (x,t)\in \esspjt(E)\cap U$ the section $E_{x,t}$ is equivalent to a $k$-dimensional ball.
\end{proposition}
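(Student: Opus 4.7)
The plan is to trace through the proof of Theorem~\ref{thm:F-ineq} and isolate the isoperimetric step, which under the strict monotonicity assumption \eqref{eq:Fincr} must be an equality.

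First, by Lemma~\ref{lemma:L_loc_fin} we may reduce to the case $\Ha^{n+1}(E\cap(U\times\Ry))<+\infty$ by localizing to bounded open subsets $U'\Subset U$; this places us in the setting of Step~1 of the proof of Theorem~\ref{thm:F-ineq}. Next, since \eqref{eq:F-ineq} holds for \emph{every} Borel set $B\subset U$, and the integrands are non-negative, the global equality \eqref{eq:equal_F} forces the equality
\[
 \int_{\brid\Es\cap(B\times\Ry)} F(\nu^{\Es})\,d\Hn=\int_{\brid E\cap(B\times\Ry)} F(\nu^E)\,d\Hn
\]
for every Borel $B\subset U$ (apply \eqref{eq:F-ineq} to $B$ and to $U\setminus B$ and sum).

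Fix an arbitrary Borel set $B\subset U$ and set $B_2:=B\cap G_{\Es}$. On $B_2$, the chain of inequalities \eqref{eq:catena1}$-$\eqref{eq:catena3} in the proof of Theorem~\ref{thm:F-ineq} must all be equalities. The key step in that chain is the monotonicity estimate
\[
 \Ft\bigl(\nabla_x L(x,t),\Ha^{k-1}(\brid\Es_{x,t}),\partial_t L(x,t)\bigr)\leq\Ft\bigl(\nabla_x L(x,t),\Ha^{k-1}(\brid E_{x,t}),\partial_t L(x,t)\bigr),
\]
which is pointwise and relies on the isoperimetric inequality $\Ha^{k-1}(\brid\Es_{x,t})\leq\Ha^{k-1}(\brid E_{x,t})$. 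Since the finite total equality forces equality of the integrals over $B_2$ and the pointwise bound goes in one direction, we get pointwise equality for $\Ha^{n-k+1}\aev (x,t)\in G_{\Es}\cap U$. Because the integrals in \eqref{eq:equal_F} are finite, the right-hand side of the above display is finite $\Ha^{n-k+1}\aev$, and the strict monotonicity assumption \eqref{eq:Fincr} then upgrades the pointwise equality of $\Ft$-values to
\[
 \Ha^{k-1}(\brid\Es_{x,t})=\Ha^{k-1}(\brid E_{x,t})\qquad\text{for }\Ha^{n-k+1}\aev(x,t)\in G_{\Es}\cap U.
\]

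Since $\Es_{x,t}$ is, by construction, a $k$-dimensional ball with the same $\Hk$-measure as $E_{x,t}$, the rigidity part of the classical isoperimetric inequality in $\Rk$ forces $E_{x,t}$ to be equivalent to a ball for such $(x,t)$. Finally, by Vol'pert's Theorem~\ref{thm:volpert} applied to $\Es$ and the identity $L(x,t)=\Ha^k(E_{x,t})=\Ha^k(\Es_{x,t})$, the sets $G_{\Es}$ and $\esspjt(E)$ agree up to an $\Ha^{n-k+1}$-null set, which yields the conclusion for $\Ha^{n-k+1}\aev(x,t)\in\esspjt(E)\cap U$. I expect the only delicate point to be the bookkeeping needed to deduce the Borel-localized equality and to ensure the strict monotonicity in \eqref{eq:Fincr} can indeed be invoked (i.e.\ that $\Ft(\nabla_xL,\Ha^{k-1}(\brid E_{x,t}),\partial_tL)$ is finite almost everywhere on the relevant set), since the other links in the chain \eqref{eq:catena1}$-$\eqref{eq:catena3} (Jensen's inequality, the coarea splitting) play no role for the conclusion sought.
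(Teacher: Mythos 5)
Your proposal is correct and follows essentially the same route as the paper: localize via Lemma~\ref{lemma:L_loc_fin}, upgrade the global equality to a Borel-localized one, extract the pointwise equality in the isoperimetric step of the chain~\eqref{eq:catena1} using the strict monotonicity~\eqref{eq:Fincr} and finiteness of~\eqref{eq:equal_F}, and then invoke the equality case of the isoperimetric inequality in $\Rk$. The only cosmetic difference is that the paper directly takes $B=U\cap G_E\cap G_{E^\sigma}$ (explicitly ensuring Vol'pert applies to both $E$ and $E^\sigma$ so that $\brid E_{x,t}$ and the comparison make sense a.e.), whereas you work with $B\cap G_{E^\sigma}$ for arbitrary Borel $B$ and handle the measure-theoretic identification $G_{E^\sigma}\sim\esspjt(E)$ at the end; both lead to the same conclusion.
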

\begin{proof}
 Assumption \eqref{eq:equal_F} and inequality \eqref{eq:F-ineq} assure us that
\begin{equation}\label{eq:pr_F_palla}
  \int_{\brid E^\sigma\cap(B\times\Ry)} F(\nu^{E^\sigma})\,d\Hn=
  \int_{\brid E\cap(B\times\Ry)} F(\nu^{E})\,d\Hn
\end{equation}
for every Borel set $B\subset U$. Possibly replacing $U$ by $U'$, where $U'\Subset U$, from Lemma
\ref{lemma:L_loc_fin}
we can assume that $\Ha^{n+1}(E\cap(U\times\Ry))<+\infty$. Hence, on choosing $B=U\cap G_E\cap G_{E^\sigma}$ in
\eqref{eq:pr_F_palla} we have equalities in \eqref{eq:catena1}. This, in combination with assumption \eqref{eq:Fincr}
and the fact that the integrals in \eqref{eq:equal_F} have finite value,
gives us that $\Ha^{k-1}(\brid E_{x,t})=\Ha^{k-1}(\brid E^\sigma_{x,t})$ for $\Ha^{n-k+1}\aev (x,t)\in B$ and
therefore for
$\Ha^{n-k+1}\aev (x,t)\in\esspjt(E)\cap U$. On applying the isoperimetric theorem the result is proven.
\end{proof}

Theorem \ref{thm:F-ineq} and Proposition \ref{thm:equal_F} are sufficient to prove Theorem \ref{thm:equal_bv}. The
problem of whether a set satisfying \eqref{eq:equal_F} is necessarily Steiner symmetric or not is the content of the
next result. Here, we need stronger assumptions. In particular we require that the precise representative $L^*$ of
$L$ ---  see, e.g., \cite{EG}*{\S 1.7.1}
for the
definition --- satisfies
\begin{equation}\label{eq:prec_repr}
 L^*(x,t)>0\text{ for }\Ha^{n-k-1}\aev (x,t)\in U\,.
\end{equation}
We introduce the following notation. Given $i=1,\dotsc,n-k$, for $(x,t)\in\Rnk\times\Rt$ we write
$\hat{x}_i:=(x_1,\dotsc,x_{i-1},x_{i+1},\dotsc,x_{n-k},t)$ and $\hat{t}:=x$. If $g$ is a function defined on an open set
$U\subset\Rnk\times\Rt$, we set $g_{\hat{x}_i}:=f_{|U\cap R_{\hat{x}_i}}$, where $R_{\hat{x}_i}$ is the straight line
passing through $(x_1,\dotsc,x_{i-1},0,x_{i+1},\dotsc,x_{n-k},t)$ and orthogonal to the hyperplane $x_i=0$. Then
$f_{\hat{t}}$ is defined accordingly.
\begin{theorem}\label{thm:SteinerF}
Let $F:\R^{n+1}\to[0,+\infty)$ be a convex function satisfying \eqref{eq:F-hom}, \eqref{eq:F-radial} and
\eqref{eq:Fincr}.
Let $U\subset\Rnk\times\Rt$ be an open set and let $E$ be a set of finite perimeter satisfying
\eqref{eq:prec_repr} and such that
\begin{equation}\label{eq:Lfinite}
L(x,t)<+\infty \text{ for } \Ha^{n-k+1}\aev(x,t)\in U\,.
\end{equation}
Assume that there exists a convex set $K\subset \Rnk\times\Rt$ such that the function
\begin{equation}\label{eq:Kconvex}\begin{split}
K\ni(\xi_x,\xi_t)\mapsto \Ft(\xi_x,1,\xi_t)\text{ is strictly convex} \text{ and } \\
\left(
\frac{\nu^E_x}{\abs{\nu^E_y}},\frac{\nu^E_t}{\abs{\nu^E_{y}}}
\right) \in K\,\,\, \Hn\aev \text{on }\brid E\cap (U\times\Rk)\,.
\end{split}\end{equation}
Assume also that
\begin{equation}\label{eq:abc}
 \Hn\left(\bigl\{(x,y,t)\in\brid \Es : \nu_y^{\Es}(x,y,t)=0\bigr\}\cap \bigl(U\times\Ry)\right)=0\,.
\end{equation}
If \eqref{eq:equal_F} is fulfilled, then for each connected component $U_\alpha$ of $U$, $E\cap(U_\alpha\times\Ry)$
is equivalent to $E^\sigma\cap
(U_\alpha\times\Ry)$ up to translations in the
$y$-plane. In particular, if $U$ is connected and $\Ha^{n-k+1}(\esspjt(E)\setminus U)=0$, then $E$ is equivalent
to $E^\sigma$ up to translations in the $y$-plane.
\end{theorem}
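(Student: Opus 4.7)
The plan is to adapt the barycentre strategy of \cite{barcafus}, which was developed there for the pure perimeter case, to the functional $\int F(\nu^E)$. The overall scheme is: (i) by Proposition \ref{thm:equal_F} almost every section $E_{x,t}$ is a $k$-dimensional ball, so it can be described by a measurable centre $b(x,t)\in\Rk$ and the common radius $R(x,t)=(L(x,t)/\omega_k)^{1/k}$; (ii) equality \eqref{eq:equal_F} forces equality throughout the chain \eqref{eq:catena1}--\eqref{eq:catena3}, and in the Jensen step of \eqref{eq:catena2} the strict convexity assumption \eqref{eq:Kconvex} pins down the boundary normal direction fibre-by-fibre; (iii) translating this rigidity of the normal into an identity for $Db$, one sees that $b$ is essentially constant on each connected component $U_\alpha$ of $U$; (iv) this means $E\cap(U_\alpha\times\Ry)$ is a $y$-translate of $\Es\cap(U_\alpha\times\Ry)$, which is exactly the assertion.

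Fix a component $U_\alpha$. By \eqref{eq:Lfinite} and Lemma \ref{lemma:L_loc_fin} I may localise so that $\Ha^{n+1}(E\cap(U_\alpha\times\Ry))<+\infty$. Equality \eqref{eq:equal_F} together with Theorem \ref{thm:F-ineq} yields equality in \eqref{eq:F-ineq} for every Borel $B\subset U$; choosing $B=U_\alpha\cap G_E\cap G_{\Es}$ and retracing \eqref{eq:catena1}--\eqref{eq:catena3} I get equality at every intermediate step. The essential step is the Jensen inequality applied to the convex integrand $(\xi_x,\xi_t)\mapsto\Ft(\xi_x,1,\xi_t)$. Since by \eqref{eq:Kconvex} this integrand is strictly convex on $K$ and the vector $(\nu^E_x/\abs{\nu^E_y},\nu^E_t/\abs{\nu^E_y})$ lies in $K$ $\Hn$-a.e.\ on $\brid E\cap(U\times\Rk)$, the equality case of Jensen forces
\[
\left(\frac{\nu^E_x(x,y,t)}{\abs{\nu^E_y(x,y,t)}},\,\frac{\nu^E_t(x,y,t)}{\abs{\nu^E_y(x,y,t)}}\right)\text{ is constant in }y\in\brid E_{x,t}
\]
for $\Ha^{n-k+1}$-a.e.\ $(x,t)\in\esspjt(E)\cap U_\alpha$. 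On the other hand, since $E_{x,t}$ is the ball $B(b(x,t),R(x,t))$, using the defining function $\psi(x,y,t)=\abs{y-b(x,t)}^k-R(x,t)^k$ one computes
\[
\frac{\nu^E_x(x,y,t)}{\abs{\nu^E_y(x,y,t)}}=-\frac{y-b(x,t)}{R(x,t)}\cdot\nabla_x b(x,t)-\nabla_x R(x,t),
\]
with an analogous formula in the $t$ variable. The ratio is thus an affine function on $\brid E_{x,t}$ with linear part governed by $\nabla b$; its being constant as $(y-b)/R$ runs over the full sphere $S^{k-1}$ forces $\nabla_x b\equiv 0$ and $\partial_t b\equiv 0$.

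To promote this pointwise-per-fibre identity into the statement that the measure $Db$ vanishes on $U_\alpha$, I would use the slicing representation \eqref{eq:formula_deriv} (exactly as in \cite{barcafus}*{Section~3}): each derivative $D_i b$ of the barycentre map is expressed as an integral over $\brid E$ of quantities of the form $(y-b)/R\cdot\nu^E_x/\abs{\nu^E_y}$ plus a boundary contribution from $\{\nu^E_y=0\}$; the rigidity established above annihilates the first piece, while assumption \eqref{eq:abc}, transferred to $E$ via the equality in the chain, controls the second. Thus $Db\equiv 0$ on $U_\alpha$. Assumption \eqref{eq:prec_repr} then prevents $b$ from having a nontrivial jump across any codimension-one ``crack'' in the support of $L$, and connectedness of $U_\alpha$ forces $b\equiv c_\alpha\in\Rk$ there. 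Consequently $E\cap(U_\alpha\times\Ry)$ is equivalent to the translate of $\Es\cap(U_\alpha\times\Ry)$ by $(0,c_\alpha,0)$, and if $U$ is connected with $\Ha^{n-k+1}(\esspjt(E)\setminus U)=0$, the single translation $c=c_\alpha$ works globally, proving the last assertion. The main obstacle is precisely this last step: making the formal identity ``$\nabla b=0$ on each fibre'' into a genuine distributional statement when $b$ is only measurable, which is where the nondegeneracy hypotheses \eqref{eq:prec_repr} and \eqref{eq:abc} must be used carefully, and where the argument departs in substance from both \cite{CCF} and the higher-codimension phenomenon (coarea irregularity) flagged in Section \ref{sec:states}.
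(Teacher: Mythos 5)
Your overall strategy matches the paper's: reduce to showing the barycentre $b(x,t)$ of the sections is constant, force constancy of $\nu^E_{x,t}/\abs{\nu^E_y}$ on a.e.\ fibre via the equality case of Jensen and the strict convexity hypothesis \eqref{eq:Kconvex}, and conclude via an ACL-type argument. However, there are two concrete gaps in your execution.

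First, you cite the wrong ingredient from \cite{barcafus}. Formula \eqref{eq:formula_deriv} and \cite{barcafus}*{Section 3} concern the function $L$, not the barycentre. The paper instead invokes \cite{barcafus}*{Theorem~4.3}, which is precisely the tool you need: under \eqref{eq:abc} transferred to $E$, it shows that the restriction $\beta_{\hat{x}_i}$ of $\beta:=b_y$ to a.e.\ coordinate line lies in $W^{1,1}_\loc$, and it gives the representation
\[
\beta'_{\hat{x}_i}(x_i)=\frac{1}{L^*_{\hat{x}_i}(x_i)}\int_{\brid E_{x,t}}[y-\beta_{\hat{x}_i}(x_i)]\frac{\nu^E_i}{\abs{\nu^E_y}}\,d\Ha^{k-1}(y)\,.
\]
Once $\nu^E_{x,t}/\abs{\nu^E_y}$ is known to be constant on the sphere $\brid E_{x,t}$ (and consequently $\abs{\nu^E_y}$ and $\nu^E_{x,t}$ are constant too, by $\abs{\nu^E}=1$), this ratio factors out of the integral and the remaining $\int_{\brid E_{x,t}}(y-\beta)\,d\Ha^{k-1}$ vanishes by the very definition of barycentre. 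That single observation gives $\beta'_{\hat{x}_i}\equiv 0$ with no need for your intermediate heuristic identity $\nu^E_x/\abs{\nu^E_y}=-\tfrac{y-b}{R}\cdot\nabla_x b-\nabla_x R$, which presupposes differentiability of $b$ and a smooth parametrisation of $\brid E$ that you do not have.

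Second, you do not nail down the passage from ``$\beta'_{\hat{x}_i}=0$ for a.e.\ line'' to ``$\beta\equiv$ const.\ on $U_\alpha$'', which is where \eqref{eq:prec_repr} actually enters (it makes $L^*_{\hat{x}_i}>0$ a.e.\ on a.e.\ line so the representation formula is meaningful and $\beta$ is finite a.e.). The paper first treats bounded $\beta$ via the ACL characterisation of Sobolev functions (since $\beta\in L^1_\loc$ and the line-restrictions are absolutely continuous with zero derivative, one gets $\beta\in W^{1,1}_\loc(U_\alpha)$ with $\nabla\beta=0$, hence constant by connectedness), and then removes the boundedness assumption by truncating $\beta$ componentwise and letting the truncation level go to infinity. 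This truncation step is absent in your proposal and is genuinely needed because $\beta$ is a priori only measurable and finite a.e., not bounded.
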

\begin{proof}
\step{1} Let $U_\alpha$ be any connected component of $U$. From Proposition \ref{thm:equal_F} we know that for
$\Ha^{n-k+1}\aev
(x,t)\in\esspjt(E)\cap U_\alpha$ the section $E_{x,t}$ is equivalent to a $k$-dimensional ball of radius $R(x,t)$ and
clearly the same holds for $E^\sigma$ with the same radius. Denote by $b(x,t)$ and $\tilde{b}(x,t)$ the center of
these balls. Since $E^\sigma$ is Steiner symmetric we have that $\tilde{b}(x,t)\equiv(x,0,t)$. The result will follow if
we show that $\beta(x,t):=\bigl(b(x,t)\bigr)_y$ is constant. Notice that $\beta(x,t)$ is a measurable function 
which, by \eqref{eq:prec_repr} and \eqref{eq:Lfinite} is finite a.e., and is equal to
\[
 \beta(x,t)= \frac{1}{L(x,t)}\int_{E_{x,t}}y\,dy\,.\\
\]

\step{2} Since equality \eqref{eq:equal_F} holds, arguing as in the proof
of Proposition \ref{prop:nu_uguale} we deduce
that condition \eqref{eq:abc} is equivalent to
\begin{equation}\label{eq:abc2}
 \Hn\left(\bigl\{(x,y,t)\in\brid E : \nu_y^{E}(x,y,t)=0\bigr\}\cap \bigl(U\times\Ry)\right)=0\,.
\end{equation}
Therefore, using \cite{barcafus}*{Theorem 4.3} we get that the function $\beta_{\hat{x}_i}\in W^{1,1}_\loc(U\cap
R_{\hat{x}_i};\Rk)$ and for $\Ha^1\aev x_i\in U\cap R_{\hat{x}_i}$
\begin{equation}\label{eq:bary}
 \beta'_{\hat{x}_i}(x_i)=\frac{1}{L^*_{\hat{x}_i}(x_i)}\int_{\brid E_{x,t}}[y-\beta_{\hat{x}_i}(x_i)]
 \frac{\nu^E_i(x,y,t)}{\abs{\nu^E_y(x,y,t)}}d\Ha^{k-1}(y)\,.
\end{equation}
A similar equality holds for $\beta'_{\hat{t}}(t)$.

By \eqref{eq:pr_F_palla} we have equalities in \eqref{eq:catena1} and \eqref{eq:catena2}. Hence, from \eqref{eq:abc2} we
get
\[
 \Ft\left(\int_{\brid E_{x,t}}\frac{\nu^E_x}{\abs{\nu^E_y}}d\Ha^{k-1},
 \int_{\brid E_{x,t}}d\Ha^{k-1},
 \int_{\brid E_{x,t}}\frac{\nu^E_t}{\abs{\nu^E_y}}d\Ha^{k-1}\right) =
 \int_{\brid E_{x,t}} F\left(\frac{\nu^E_x}{\abs{\nu^E_y}},1,\frac{\nu^E_t}{\abs{\nu^E_y}}\right)\,.
\]
From \eqref{eq:Kconvex}, $\nu^E_{x,t}/\abs{\nu^E_y}$ is constant with respect to $y$. Moreover, as $\brid E_{x,t}$
is a sphere, $\abs{\nu^E_y}$ is constant and so $\nu^E_{x,t}$ is constant. Hence, from \eqref{eq:bary} we get
\begin{equation}\label{eq:betaconst}
 \beta'_{\hat{x}_i}(x_i)=\frac{1}{L^*_{\hat{x}_i}(x_i)}\frac{\nu^E_i(x,t)}{\abs{\nu^E_y(x,t)}}\int_{\brid E_{x,t}}
 [y-\beta_{\hat{x}_i}(x_i)]\,d\Ha^{k-1}(y)=0\,,
\end{equation}
where we dropped the variable $y$ for functions that are constant in $\brid E_{x,t}$ and the last equality is due to the
definition of the function $\beta$. 

\step{3} We claim that $\beta$ is constant. Indeed, if $\beta$ is bounded, it is locally integrable. Therefore,
$\beta\in L^1_\loc(U_\alpha;\Rk)$ and its
restrictions $\beta_{\hat{x}_i}$ and $\beta_{\hat{t}}$ are absolutely continuous and integrable. Hence, by a standard
characterization of Sobolev functions (see, e.g.,
\cite{EG}*{\S 4.9, Theorem 2}) we have that $\beta\in W^{1,1}_{\loc}(U_\alpha;\Rk)$ and $\nabla \beta=0$ in $U_\alpha$
and so $\beta$ is constant in $U_\alpha$. For $\beta=(\beta_1,\dotsc,\beta_k)$ unbounded, fix $T>0$ and define the
truncated function $\beta^T$ as
\[\beta^T_j(x,t):=\begin{cases}
   \beta_j(x,t) &\text{ if }\abs{\beta_j(x,t)}\leq T\\
   T &\text{ if }\beta_j(x,t)>T\\
   -T &\text{ if }\beta_j(x,t)<-T\,,
  \end{cases}
\]
for $j=1,\dotsc,k$. Hence 
\[
  (\beta_{j,\hat{x}_i}^T)'=
  \begin{cases}
  0 &\text{ if }\abs{\beta_j(x,t)}> T\\
  \beta_{j,\hat{x}_i}' &\text{ if }\abs{\beta_j(x,t)}\leq T\,,
  \end{cases}
\]
with a similar equality holding for $(\beta_{j,\hat{t}_i}^T)'$.
Therefore, since $\beta^T$ is bounded, from \eqref{eq:betaconst} and the previous equality we deduce that $\beta^T=C^T$
a.e. for some constant $C^T\in\Rk$. Finally, as 
\[
 \beta(x,t)=\lim_{T\to +\infty}\beta^T(x,t)=\lim_{T\to\infty}C^T
\]
and since $\beta$ is finite a.e., we deduce that $\beta$ is constant.
\end{proof}

After proving the results concerning functionals of the form \eqref{eq:Fnu}, we deal now with the Pólya-Szeg\H{o}
principle for $BV$ functions. In the proof of Theorem \ref{thm:ineq-bv} we will use Theorem \ref{thm:frelax} below, a
consequence of relaxation results concerning $BV$ functions, see e.g., \cite{AFP}*{Theorem 5.47}.

\begin{theorem}[\cite{CF}*{Theorem F}]\label{thm:frelax}
 Let $f$ be a convex function satisfying \eqref{eq:f-linear}. Let $\Omega\subset\Rn$ be an open set and let $J_f$ be
the functional defined by \eqref{eq:Jf}. If $u\in BV(\Omega)$ and $\{u_j\}$ is any sequence in $BV(\Omega)$ such that
$u_j\to u$ in $L^1_\loc(\Omega)$, then
\[
 J_f(u;\Omega)\leq\liminf_{j\to+\infty} J_f(u_j;\Omega)\,.
\]
\end{theorem}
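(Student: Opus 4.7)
The plan is to transfer the problem from the functional $J_f$, which mixes an absolutely continuous and a singular part, to the purely geometric functional \eqref{eq:Fnu} on the reduced boundary of the subgraph, where classical Reshetnyak lower semicontinuity applies directly. Specifically, by Proposition \ref{prop:J=F} we have, for any non-negative $v\in BV_\loc(\Omega)$,
\[
 J_f(v;\Omega)=\int_{\brid \mathcal{S}_v\cap (\Omega\times\Rt)} F_f(\nu^{\mathcal{S}_v})\,d\Hn,
\]
with $F_f$ convex, positively $1$-homogeneous, vanishing at $0$, and (thanks to the linear growth \eqref{eq:f-linear} of $f$) finite everywhere on $\R^{n+1}$, hence continuous on $\R^{n+1}\setminus\{0\}$. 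A preliminary reduction splits $u_j=u_j^+-u_j^-$ in order to restrict to the non-negative case, since $J_f$ is compatible with vertical translations and both $f$ and $f_\infty$ are controlled by $|\cdot|+1$.

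Assume $\liminf_{j\to\infty} J_f(u_j;\Omega)<+\infty$ (else there is nothing to prove) and pass to a subsequence achieving the liminf. The linear growth of $f$ together with the definition of $J_f$ gives $|Du_j|(\omega)\le C_\omega(J_f(u_j;\Omega)+\Hn(\omega))$ for every $\omega\Subset\Omega$, so $u\in BV_\loc(\Omega)$ and $Du_j\weakto Du$ weakly$^*$ on compact subsets, possibly after extracting a further subsequence. The key step is to deduce the corresponding convergence at the level of subgraphs: since $u_j\to u$ in $L^1_\loc(\Omega)$, Fubini and the layer--cake identity $|\chi_{\mathcal{S}_{u_j}}-\chi_{\mathcal{S}_u}|(x,y,t)$ integrated in $t$ equals $|u_j-u|(x,y)$, so $\chi_{\mathcal{S}_{u_j}}\to\chi_{\mathcal{S}_u}$ in $L^1_\loc(\Omega\times\Rt)$. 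Combined with the perimeter bound $P(\mathcal{S}_{u_j};\omega\times\Rt)\le |Du_j|(\omega)+\Hn(\omega)$ coming from Theorem \ref{thm:Su-}, the standard compactness theorem for sets of finite perimeter yields $D\chi_{\mathcal{S}_{u_j}}\weakto D\chi_{\mathcal{S}_u}$ weakly$^*$ as $\R^{n+1}$-valued Radon measures on $\Omega\times\Rt$, and $|D\chi_{\mathcal{S}_{u_j}}|\weakto \mu$ with $\mu\ge |D\chi_{\mathcal{S}_u}|$.

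At this point Reshetnyak's lower semicontinuity theorem (see e.g.\ \cite{AFP}*{Theorem 2.38}) applied to the continuous, convex, positively $1$-homogeneous integrand $F_f$ and to the vector measures $\nu^{\mathcal{S}_{u_j}}\,\Hn\rest\brid \mathcal{S}_{u_j}=D\chi_{\mathcal{S}_{u_j}}$ gives
\[
 \int_{\brid \Su\cap(\Omega\times\Rt)} F_f(\nu^{\Su})\,d\Hn \;\le\; \liminf_{j\to\infty}\int_{\brid \mathcal{S}_{u_j}\cap (\Omega\times\Rt)}F_f(\nu^{\mathcal{S}_{u_j}})\,d\Hn,
\]
and invoking Proposition \ref{prop:J=F} on both sides converts this back to $J_f(u;\Omega)\le \liminf_j J_f(u_j;\Omega)$, as required.

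The step I expect to be most delicate is verifying that weak$^*$ convergence of the measures $D\chi_{\mathcal{S}_{u_j}}$ on the unbounded cylinder $\Omega\times\Rt$ holds on the open set where the surface integral lives, and that $F_f$ satisfies the technical hypotheses of the Reshetnyak theorem (in particular, continuity up to the boundary of its effective domain, which here is trivial thanks to \eqref{eq:f-linear}). A safe way around the unboundedness issue is to first prove the inequality with $\Omega$ replaced by any $\omega\Subset\Omega$ truncated in the $t$-direction by $\{|t|<R\}$, observe that the contribution of $\brid\Su\cap\{|t|=R\}$ is controlled by the sections $L(\cdot,\pm R)$ which vanish as $R\to\infty$ since $u\in L^1_\loc$, and finally monotonically exhaust $\Omega\times\Rt$.
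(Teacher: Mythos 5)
The paper does not actually prove this statement; it is quoted verbatim from \cite{CF}*{Theorem F} and attributed to standard relaxation theory for $BV$ functionals (\cite{AFP}*{Theorem 5.47}). Your attempt, on the other hand, contains a genuine gap. You assert that the linear growth of $f$ gives $\abs{Du_j}(\omega)\leq C_\omega\bigl(J_f(u_j;\Omega)+\Hn(\omega)\bigr)$, but \eqref{eq:f-linear} is only an \emph{upper} bound $0\leq f(z)\leq C(1+\abs{z})$; it provides no coercivity whatsoever. For instance $f(z)=\abs{z_1}$ satisfies every hypothesis of Theorem~\ref{thm:frelax}, yet $J_f(u_j;\Omega)$ can stay bounded (even identically zero) while $\abs{Du_j}(\omega)\to\infty$ along the sequence. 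Consequently the perimeters $P(\mathcal{S}_{u_j};\omega\times\Rt)$ need not be uniformly bounded, the measures $D\chi_{\mathcal{S}_{u_j}}$ need not be weakly$^*$ precompact, and Reshetnyak's lower semicontinuity theorem, whose hypothesis is precisely weak$^*$ convergence of the vector measures, cannot be invoked. This breaks the core of your argument; the issue you flag at the end (the unboundedness of the cylinder in $t$) is in fact harmless, since $\chi_{\mathcal{S}_{u_j}}\to\chi_{\mathcal{S}_u}$ already in $L^1(\omega\times\Rt)$ whenever $\omega\Subset\Omega$.

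There is also a secondary problem with the reduction to $u\geq 0$ needed to apply Proposition~\ref{prop:J=F}: $J_f$ is not additive under the split $u=u^+-u^-$. At a jump of $u$ from $a>0$ to $b<0$ across a hyperplane with normal $\nu$, the singular contribution to $J_f(u^+)+J_f(u^-)$ is $\frec(-\nu)\,a+\frec(\nu)\,\abs{b}$, whereas $J_f(u)$ contributes $\frec(-\nu)(a+\abs{b})$; these agree only when $\frec$ is even. The route the paper is appealing to avoids both difficulties entirely: since $f$ is convex with linear growth one represents $J_f(u;\Omega)$ by convex duality as the supremum over $\phi\in C^1_c(\Omega;\Rn)$ with values in the effective domain of $f^*$ of the functionals $u\mapsto -\int_\Omega u\,\mathrm{div}\,\phi\,dz-\int_\Omega f^*(\phi)\,dz$, each of which is continuous under $L^1_\loc(\Omega)$ convergence of $u$; the supremum of such functionals is automatically lower semicontinuous, with no a priori mass bound required. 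This is the content of \cite{AFP}*{Theorem 5.47}.
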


\begin{proof}[Proof of Theorem \ref{thm:ineq-bv}]
 We are going to prove a stronger inequality than \eqref{eq:ineq-bv}, i.e., 
 \begin{equation}\label{eq:ineq_bv-bis}
  J_f(\us;B\times\Ry)\leq J_f(u;B\times\Ry)\,,
 \end{equation}
for any Borel set $B\subset \proj(\Omega)$. As before we identify $u_0$ with $u$.

\step{1}  Let us first prove that $\us\in BV
(\omega\times\Ry)$ for every open set $\omega\Subset\proj(\Omega)$. Since $u\in BV_{0,y}(\Omega)$ then $u\in
BV(\omega\times\Ry)$. Hence, by approximation
we can find a sequence of non-negative functions $\{u_h\}\subset C^1(\omega\times\Ry)$ such that $u_h\to u$ in
$L^1(U\times\Ry)$ and 
\[\lim_{h\to\infty} \int_{\omega\times\Ry}\abs{\nabla u_h}\,dz=\abs{Du}(\Omega\times\Ry)\,.\]
By the continuity of the Steiner rearrangement --- equation \eqref{lemma:s_continuous} --- we get that
$(u_h)^\sigma\to\us$ in
$L^1(\omega\times\Ry)$; moreover by \eqref{eq:disug-diversa} we have that the sequence $\norm{\nabla
u_h^\sigma}_{L^1(\omega\times\Ry)}$ is bounded. Therefore (see, e.g., \cite{AFP}*{Theorem 3.9}) we conclude that $\us\in
BV(\omega\times\Ry)$.

\step{2} Let us assume, for the moment, that $u$ is compactly supported in $\Omega$. By Theorem \ref{thm:Su-}, $\Su$ is
a set of finite perimeter in
$\R^{n+1}$. On applying Proposition \ref{prop:J=F}, Theorem \ref{thm:F-ineq} and \eqref{eq:su_equiv_sus} we deduce that
for every Borel set $B\subset\proj(\Omega)$
\[\begin{split}
  J_f(\us;B\times\Ry)&=\int_{\brid \Sus \cap (B\times\Ry\times\Rt)} F_f(\nu^{\Sus})\,d\Hn\\
  &\leq \int_{\brid \Su \cap (B\times\Ry\times\Rt)} F_f(\nu^{\Su})\,d\Hn=J_f(u;B\times\Ry)\,,
 \end{split} 
\]
hence \eqref{eq:ineq_bv-bis} holds.

\step{3} Let us now drop the extra assumption. Fixed $\omega\Subset\proj(\Omega)$ we can find a smooth cutoff function
compactly supported in $\proj(\Omega)$ such that $\phi\equiv 1$ on $\omega$ and a smooth function $\eta$ compactly
supported in $\Rk$ with $\eta\equiv 1$ in $B(0,1)$. Let us define the functions
\[v(x,y)=u(x,y)\phi(x)\text{\ and\ }v_h(x,y)=v(x,y)\eta(\frac{y}{h})\,,\text{\ for\ }h\in\N\,.\]
Clearly, $v\in BV(\Rn)$ and $v_h\to v$ as $h\to +\infty$ in $L^1(\Rn)$. Hence, by Theorem \ref{thm:frelax} we deduce
that
\begin{equation}\label{eq:pr_BV_ineq-1}
 J_f(\us;\omega\times\Ry)=J_f(v^\sigma;\omega\times\Ry)\leq\liminf_{h\to+\infty}J_f(v_h^\sigma;\omega\times\Ry)\,.
\end{equation}
Moreover, since $\abs{D(v-v_h)}(\Rn)\to 0$ as $h\to +\infty$, we get
\begin{equation}\label{eq:pr_BV_ineq-2}
 \liminf_{h\to+\infty} J_f(v_h;\omega\times\Ry)=J_f(v;\omega\times\Ry)=J_f(u;\omega\times\Ry)\,.
\end{equation}
Now, for $B=\omega$ inequality \eqref{eq:ineq_bv-bis} follows from \eqref{eq:pr_BV_ineq-1}, \eqref{eq:pr_BV_ineq-2} and
the second step applied to $v_h$. Then, the general case where $B$ is any Borel set, is derived by approximation.
\end{proof}

\begin{proof}[Proof of Theorem \ref{thm:equal_bv}]
The proof is very similar to the one of Theorem \ref{thm:equality}. Thanks to \eqref{eq:su_equiv_sus}, it is sufficient
to show that $(\Su)^\sigma$ is equivalent to $\Su$.

\step{1} We claim that for $\Ha^{n-k+1}\aev (x,t)\in\esspj(\Su)$ there exists $R(x,t)>0$ such that the set
\[
 \{y:u(x,y)>t\}\text{\ is equivalent to }\{\abs{y}<R(x,t)\}\,.
\]
From \eqref{eq:equal_bv} and \eqref{eq:ineq_bv-bis} we see that equality holds in \eqref{eq:ineq_bv-bis} for any Borel
set $B\subset \proj(\Omega)$. Given any open set $\omega\Subset \proj(\Omega)$ let $\phi$ be a smooth cutoff function
with compact support in $\proj(\Omega)$ such that $\phi\equiv 1$ on $\omega$. Identifying $u$ with its extension $u_0$,
define $v:=u\phi$. Then, we have the following equality: 
\[
 J_f(v^\sigma;\omega\times\Ry)=J_f(v;\omega\times\Ry)\,.
\]
Hence, on using Proposition \ref{prop:J=F} we get
\[
 \int_{\brid\mathcal{S}_{v^\sigma}\cap(\omega\times\Ry\times\Rt)} F(\nu^{\mathcal{S}_{v^\sigma}})\,d\Hn
 =\int_{\brid\mathcal{S}_{v}\cap(\omega\times\Ry\times\Rt)} F(\nu^{\mathcal{S}_{v}})\,d\Hn\,.
\]
Since $v$ belongs to $BV(\Rn)$ and it is non-negative, from \eqref{eq:condizioni_om1} we deduce that $v$ has compact
support and therefore $\mathcal{S}_v$ has finite perimeter in $\R^{n+1}$. By the last equality and Lemma
\ref{lemma:f_F} below, the claim is proven from Proposition \ref{thm:equal_F} and from the arbitrariness of $\omega$.

\step{2} We have just proved that for $\Ha^{n-k+1}\aev (x,t)\in\esspjt(\Su)$ the $(x,t)$ section of $\Su$ is equivalent
to a ball in $\Rk$ with radius $R(x,t)$. Define $b:\Rnk\times\Rt\to\Rn$ to be the center of this ball. On applying Step
1 to the function $\us$ we see that for  $\Ha^{n-k+1}\aev (x,t)\in\esspjt(\Sus)$ every section
$(\Su)^\sigma_{x,t}$ is equivalent to a ball of the same radius $R(x,t)$ with center $\tilde{b}(x,t)$. From the
definition of
the
Steiner rearrangement we get $\tilde{b}(x,t)\equiv (x,0,t)$. Now the Theorem follows once we prove that
$b-\tilde{b}\equiv(0,c,0)$ for some $c\in\Rk$.

The case $k=1$ is \cite{CF}*{Theorem 2.5}. Let $k>1$ and denote by $S_i$ the Steiner symmetrization with respect to
$y_i$ for $i=1,\dotsc,k$. Since $\Omegas=(\Omegas)^{S_i}=(\Omega^{S_i})^\sigma$, from \eqref{eq:ineq-bv} we have the
following inequalities
\begin{equation}\label{eq:pr_bv_ug1}
 J_f(\us;\Omegas)\leq J_f(u^{S_i};\Omega^{S_i})\leq J_f(u;\Omega)\,.
\end{equation}
From the assumption \eqref{eq:equal_bv} we get equalities in \eqref{eq:pr_bv_ug1}.
Since almost every section $(\Su)_{x,t}$ is a ball, arguing as in Step 1 of the proof of Proposition
\ref{prop:nu_uguale} we get  
\[
\Ha^n\bigl(\{z\in\Omega:\partial_{y_i}u(z)=0\} \cap
\{z\in\Omega:\text{ either }M(z')=0 \text{ or } u(z)<M(z')\}\bigr)=0\,,
\]
where $z':=(x,y_1,\dotsc,y_{i-1},y_{i+1},\dotsc y_k)$. Similarly we also have
\[
 \Ha^{n-1}\bigl(\{z\in\brid\Omega : \nu^\Omega_{y_i}=0\}\cap\{\pi_{n-1}(\Omega)\times\R_{y_i}\}\bigr)=0\,,
\]
where $\pi_{n-1}$ is the projection on $z'$. Since $\Omega^\sigma=(\Omegas)^{S_1}$, by the $k=1$ case, we have that
$(b(x,t))_{y_1}\equiv c_1$ for some $c_1\in\R$.
Now iterate the procedure and obtain $(b(x,t))_y\equiv(c_1,\dotsc,c_k)$ and so $b-\tilde{b}\equiv(0,c,0)$ with
$c=(c_1,\dotsc,c_k)$.
\end{proof}

The following lemma shows how properties of the function $f$ are inherited by $F_f$.
\begin{lemma}[(\cite{CF}*{Lemma 6.1}]\label{lemma:f_F}
 Let $f:\Rn\to[0,+\infty)$ be a convex function vanishing at $0$. Then, the functions $F_f$ defined by \eqref{eq:F_f}
is a convex function satisfying \eqref{eq:F-hom}. Moreover, if in addition $f$ is as in Theorem \ref{thm:equal_bv}, then
$F_f$ satisfies \eqref{eq:F-radial}, \eqref{eq:Fincr} and \eqref{eq:Kconvex} with $K=\Rnk\times(\Rt^-\cup\{0\})$.
\end{lemma}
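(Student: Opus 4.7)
The first assertion—convexity of $F_f$ and \eqref{eq:F-hom}—is already contained in Proposition \ref{prop:J=F}. Briefly, $F_f$ is the positively $1$-homogeneous extension of $f$ given by the perspective formula on $\{\xi_{n+1}<0\}$ and glued through the recession function $\frec$ on $\{\xi_{n+1}\geq 0\}$: the recession function is exactly the choice preserving convexity across the boundary, and $1$-homogeneity of $\frec$ transfers to $F_f$.

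For the second part, \eqref{eq:F-radial} follows directly from the definition together with the radiality of $f$ in $y$ and the fact that the recession function of a radial function is radial. Condition \eqref{eq:Fincr} I would check by splitting on the sign of the last variable: for $\xi_t\geq 0$ it is exactly \eqref{eq:frec_incr}; for $\xi_t<0$ the formula $\Ft(x,s,\xi_t)=(-\xi_t)\ft(-x/\xi_t,\,s/(-\xi_t))$ reduces the claim to showing that $s\mapsto\ft(a,s)$ is strictly increasing on $[0,+\infty)$ for every $a\in\Rnk$. This follows from strict convexity of $f$: for any unit vector $e\in\Rk$, the map $r\mapsto f(a,re)$ is strictly convex on $\R$ and, by radiality of $f$ in $y$, even in $r$, hence attains its unique minimum at $r=0$ and is strictly increasing on $[0,+\infty)$.

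The substantive work lies in \eqref{eq:Kconvex}. Fix a unit vector $e\in\Rk$, so that $\Ft(\xi_x,1,\xi_t)=F_f(\xi_x,e,\xi_t)$, and take two distinct points $\xi^i=(\xi_x^i,e,\xi_t^i)$ with $(\xi_x^i,\xi_t^i)\in K=\Rnk\times(\Rt^-\cup\{0\})$. I would split into cases. If both $\xi_t^i<0$, set $\tau_i=-\xi_t^i>0$ and use $1$-homogeneity to write $F_f(\xi^i)=\tau_i\,f(\xi_x^i/\tau_i,\,e/\tau_i)$. A direct computation yields, for $\lambda\in(0,1)$, $\tau=(1-\lambda)\tau_1+\lambda\tau_2$, and $\mu=\lambda\tau_2/\tau\in(0,1)$,
\[
 F_f\bigl((1-\lambda)\xi^1+\lambda\xi^2\bigr)=\tau\,f\bigl((1-\mu)(\xi_x^1/\tau_1,\,e/\tau_1)+\mu(\xi_x^2/\tau_2,\,e/\tau_2)\bigr).
\]
Since equality of $(\xi_x^i/\tau_i,\,e/\tau_i)$ for $i=1,2$ would force $\tau_1=\tau_2$ (from the middle coordinate) and then $\xi_x^1=\xi_x^2$, contradicting $\xi^1\ne\xi^2$, strict convexity of $f$ delivers the strict inequality. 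If both $\xi_t^i=0$, the claim is precisely assumption \eqref{eq:frec_convex}.

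The case I expect to be the main obstacle is the mixed one, $\xi_t^1=0$ and $\xi_t^2<0$: the convex combination lands in the perspective regime while one endpoint is evaluated through $\ftrec$. My plan is to approximate the boundary endpoint by interior points with $\xi_t^{1,k}\uparrow 0$, apply the strict inequality from the first case at each level $k$, and pass to the limit using continuity of $F_f$. Strictness in the limit should be ensured by \eqref{eq:frec_convex} together with \eqref{eq:frec_incr}, which prevent the gap from collapsing as the change of regime is crossed.
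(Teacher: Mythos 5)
The paper provides no proof of this lemma: it is attributed to \cite{CF}*{Lemma 6.1} and invoked as a black box, so I can only judge your proposal on its own merits.

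Your treatment of \eqref{eq:F-radial}, \eqref{eq:Fincr}, and the two ``pure'' cases of \eqref{eq:Kconvex} is correct. In particular, the reduction of \eqref{eq:Fincr} for $\xi_t<0$ to strict monotonicity of $s\mapsto\ft(a,s)$ and the deduction of the latter from strict convexity and evenness of $r\mapsto f(a,re)$ is right, and the perspective-function computation in the case $\xi_t^1,\xi_t^2<0$ is carried out correctly (the middle coordinate $e/\tau_i$ forces $\tau_1=\tau_2$ and then $\xi_x^1=\xi_x^2$, so strict convexity of $f$ applies to two genuinely distinct points).

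The gap is exactly where you flag it, but your proposed fix does not close it, and your diagnosis of what makes it hard is off. Passing to the limit along $\xi_t^{1,k}\uparrow 0$ turns the family of strict inequalities into a non-strict one, and neither \eqref{eq:frec_convex} nor \eqref{eq:frec_incr} is what restores strictness in the mixed case; you would need a quantitative lower bound on the convexity defect uniform in $k$, which you do not produce. The correct, and in fact elementary, observation is that the mixed case is already a \emph{consequence} of your Case 1. Fix $\xi^1=(\xi_x^1,0)$ and $\xi^2=(\xi_x^2,\xi_t^2)$ with $\xi_t^2<0$, $\xi^1\neq\xi^2$, and set $\phi(\lambda):=\Ft\bigl((1-\lambda)\xi_x^1+\lambda\xi_x^2,1,\lambda\xi_t^2\bigr)$ on $[0,1]$. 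Then $\phi$ is convex (as $F_f$ is), and for every $0<a<b\leq 1$ both endpoints of the subsegment have strictly negative last coordinate, so Case 1 gives strict convexity of $\phi$ on $[a,b]$. If strict convexity failed on $[0,1]$, then $\phi$ would be affine on a nondegenerate subinterval $[\alpha,\beta]$ of $[0,1]$, hence affine on some $[a,b]\subset(\alpha,\beta]$ with $a>0$, contradicting the preceding. Thus $\phi$ is strictly convex on all of $[0,1]$, and the mixed case follows with no use of \eqref{eq:frec_convex} or \eqref{eq:frec_incr} at all; \eqref{eq:frec_convex} is needed precisely and only for your Case~2, where both points sit on the hyperplane $\xi_t=0$.

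One further small point: you should note explicitly that $\Ft(\xi_x,1,0)=\ftrec(\xi_x,1)$ uses radiality of $\frec$ (which you correctly derived from radiality of $f$), since without this identification Case~2 does not literally reduce to \eqref{eq:frec_convex}. With the mixed-case argument replaced as above, the proposal becomes a complete and correct proof.
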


\begin{remark}\label{rk:fsobo}
 Here we want to observe that if $f$ is a non-negative function as in Theorem \ref{thm:equality}, then the function
$F_f(\xi_1,\dotsc,\xi_{n+1})$, possibly attaining infinite value if $\xi_{n+1}\geq 0$, defined as in \eqref{eq:F_f}
satisfies the assumptions of Proposition \ref{thm:equal_F}. However, if $u\in\sobo(\Omega)$ then \eqref{eq:J=F} still
holds and thus Lemma \ref{lemma:sezioni_palle} follows arguing as in Step 1 of the proof of Theorem \ref{thm:equal_bv}.
\end{remark}

\section*{Acknowledgements}
This research was funded by the
2008 ERC Advanced Grant no.\ 226234 \emph{Analytic Techniques for
Geometric and Functional Inequalities}.

The author would like to thank M.\ Barchiesi and L.\ Brasco for some helpful discussions.

\bibliography{steiner}
\end{document}